\theoremstyle{definition}
\newtheorem{thm}{Theorem}[section]
\newtheorem{Def}[thm]{Definition}
\newtheorem{pro}[thm]{Proposition}
\newtheorem{cor}[thm]{Corollary}
\newtheorem{lem}[thm]{Lemma}
\newtheorem{ex}[thm]{Example}
\newtheorem{rem}[thm]{Remark}
\theoremstyle{definition}
\begin{document}

\title{Picard groups of certain stably projectionless C$^*$-algebras}
\author{Norio Nawata}
\address{Department of Mathematics and Informatics, 
Graduate school of Science,
Chiba University,1-33 Yayoi-cho, Inage, Chiba, 263-8522, Japan}
\email{nawata@math.s.chiba-u.ac.jp}
\keywords{Picard group; Fundamental group; Stably projectionless C$^*$-algebra; Cuntz semigroup; Kirchberg's central sequence algebra}
\subjclass[2010]{Primary 46L05, Secondary 46L08; 46L35}
\thanks{The author is a Research Fellow of the Japan Society for the Promotion of Science.}
\begin{abstract} 
We compute Picard groups of several nuclear and non-nuclear 
simple stably projectionless C$^*$-algebras. 
In particular, the Picard group of the Razak-Jacelon algebra $\mathcal{W}_{2}$ is isomorphic 
to a semidirect product  of $\mathrm{Out}(\mathcal{W}_{2})$ with $\mathbb{R}_{+}^\times$. 
Moreover, for any separable simple nuclear stably projectionless C$^*$-algebra with 
a finite dimensional lattice of densely defined lower semicontinuous traces, 
we show that $\mathcal{Z}$-stability and strict comparison 
are equivalent. (This is essentially based on the result of Matui and Sato, and Kirchberg's central sequence algebras.) 
This shows if $A$ is a separable simple nuclear stably projectionless  C$^*$-algebra with a unique tracial state 
(and no unbounded trace) and has strict comparison, the following sequence is exact: 
\[\begin{CD}
      {1} @>>> \mathrm{Out}(A) @>>> \mathrm{Pic}(A) @>>> \mathcal{F}(A)
 @>>> {1} \end{CD} \] 
where $\mathcal{F}(A)$ is the fundamental group of $A$. 
\end{abstract}
\maketitle

\section{Introduction} 
Let $A$ be a C$^*$-algebra. 
Brown, Green and Rieffel introduced the Picard group $\mathrm{Pic}(A)$ of $A$ in 
\cite{BGR}. We say that an automorphism $\alpha$ of $A$ is \textit{inner} 
if there exists a unitary element $u$ in the multiplier algebra $M(A)$ of $A$ 
such that $\alpha (a)=uau^*$ for any $a\in A$. 
Let $\mathrm{Inn}(A)$ denote the set of inner automorphisms of $A$, and let 
$\mathrm{Out}(A)=\mathrm{Aut}(A)/\mathrm{Inn}(A)$. 
They showed that if $A$ is $\sigma$-unital, then $\mathrm{Pic}(A)$ is isomorphic 
to $\mathrm{Out}(A\otimes\mathbb{K})$. 
Kodaka computed Picard groups of several unital C$^*$-algebras in \cite{kod1}, \cite{kod2} 
and \cite{kod3}. 
In particular he computed the Picard groups of the irrational rotation algebras $A_\theta$. 
If $\theta$ is not quadratic irrational number, then $\mathrm{Pic}(A)$ is isomorphic 
to $\mathrm{Out}(A_\theta)$ and if $\theta$ is a quadratic number, then 
$\mathrm{Pic}(A_\theta)$ is isomorphic to $\mathrm{Out}(A_\theta )\rtimes \mathbb{Z}$. 
Kodaka considered the following set 
$$
\mathrm{FP}/\sim =\{[p]\;|\;p\text{ is a full projection in }A\otimes\mathbb{K}\text{ such that }
p(A\otimes\mathbb{K})p\cong A \} 
$$
where $[p]$ is the Murray-von Neumann equivalence class of $p$ 
and showed that if $\mathrm{Out}(A)$ is a normal subgroup of 
$\mathrm{Out}(A\otimes\mathbb{K})$ and $A$ is unital, then $\mathrm{FP}/\sim$ has a suitable group structure 
and the following sequence is exact:
\[\begin{CD}
      {1} @>>> \mathrm{Out}(A) @>>> \mathrm{Pic}(A) @>>> \mathrm{FP}/\sim
 @>>> {1} \end{CD}. \] 
Note that there exists a simple unital AF algebra $B$ with a unique tracial state such that 
$\mathrm{FP}/\sim$ of $B$ does not have any suitable group structure. 
If A is unital, K-theoretical method enables us to show that $\mathrm{Out}(A)$ is a normal subgroup of 
$\mathrm{Out}(A\otimes\mathbb{K})$ (see \cite[Proposition 1.5]{kod1}). 

The set of $\mathrm{FP}/\sim$ is similar to the fundamental group 
$\mathcal{F}(M)$ of a II$_1$ factor $M$ introduced by Murray and von Neumann in \cite{MN}. 
Watatani and the author introduced the fundamental group $\mathcal{F}(A)$ 
of a simple unital C$^*$-algebra $A$ with a unique tracial state $\tau$ 
based on Kodaka's results. 
The fundamental group ${\mathcal F}(A)$ is defined as the set of the numbers 
$\tau \otimes \mathrm{Tr}(p)$ for some projection $p \in M_n(A)$ such that $pM_n(A)p$ is 
isomorphic to $A$. We showed that $\mathcal{F}(A)$ is a multiplicative 
subgroup of $\mathbb{R}_{+}^\times$ and computed fundamental groups of several 
C$^*$-algebras in \cite{NW}. 
Moreover we showed that any countable subgroup of $\mathbb{R}_+^\times$ 
can be realized as the fundamental group of a separable simple unital C$^*$-algebra 
with a unique tracial state in \cite{NW2}. Note that the fundamental groups of separable simple 
unital C$^*$-algebras are countable. 
Furthermore the author introduced the fundamental group of a simple  
stably projectionless C$^*$-algebra with unique (up to scalar multiple) densely defined 
lower semicontinuous trace $\tau$ in \cite{Na}. 
If $\tau$ is a tracial state and $A$ is $\sigma$-unital, then the fundamental group of 
${\mathcal F}(A)$ of $A$ is defined as the set of the numbers $d_{\tau}(h)$ for some 
positive element $h\in A\otimes\mathbb{K}$ such that $\overline{h(A\otimes\mathbb{K})h}$ 
is isomorphic to $A$ where $d_\tau$ is the dimension function defined by $\tau$. 
Note that if $A$ is unital, then this definition coincides 
with the previous definition and there exist separable simple stably projectionless 
C$^*$-algebras such that their fundamental groups are equal to $\mathbb{R}_+^\times$. 
The fundamental group of a II$_1$ factor $M$ is equal to the set of trace-scaling 
constants for automorphisms of a $\mathrm{II}_{\infty}$ factor  $M\otimes B(\mathcal{H})$. 
This characterization shows that the fundamental groups of II$_1$ factors are related to the 
structure theorem for type III$_\lambda$ factors where $0<\lambda \leq 1$ 
(see \cite{Tak1} and \cite{Tak2}). 
We have a similar characterization, that is, if $A$ is $\sigma$-unital, then 
the fundamental group of $A$ is equal to the set of trace scaling constants for automorphisms 
of $A\otimes\mathbb{K}$. 

We denote by $\mathcal{Z}$ the Jiang-Su algebra constructed in \cite{JS}. 
The Jiang-Su algebra $\mathcal{Z}$ is a unital separable simple infinite-dimensional nuclear 
C$^*$-algebra whose K-theoretic invariant is isomorphic to that of complex numbers. 
We may regard $\mathcal{Z}$ as the stably finite analogue of the Cuntz algebra $\mathcal{O}_{\infty}$. 
We say that a C$^*$-algebra $A$ is $\mathcal{Z}$-\textit{stable} if $A$ is isomorphic to 
$A\otimes\mathcal{Z}$. 
It has recently become important to study regularity properties in Elliott's classification program 
for nuclear C$^*$-algebras. 
In particular, Toms and Winter conjectured that for simple separable nuclear non-type I unital C$^*$-algebras, 
the properties of (i) finite nuclear dimension, (ii) $\mathcal{Z}$-stability and 
(iii) strict comparison of positive elements are equivalent (see, for example \cite{Toms} and \cite{Win}). 
It is known that (i) implies (ii) and (ii) implies (iii) due to work of Winter \cite{Win} and R\o rdam \cite{Ror} respectively. 
Recently, Matui and Sato showed that (iii) implies (ii) in the case of finitely many extremal tracial states in \cite{MS}. 

In this paper we shall compute Picard groups of several nuclear and non-nuclear 
simple stably projectionless C$^*$-algebras. 
In the case of stably projectionless C$^*$-algebras, 
the theory of the Cuntz semigroup enables us to compute Picard groups of 
several examples. 
We shall show that if $A$ is a separable simple exact $\mathcal{Z}$-stable 
stably projectionless C$^*$-algebra with a unique tracial state $\tau$ and no unbounded trace, then the following sequence is exact: 
\[\begin{CD}
      {1} @>>> \mathrm{Out}(A) @>>> \mathrm{Pic}(A) @>>> \mathcal{F}(A)
 @>>> {1} \end{CD}. \] 
Since there exists a unital simple $\mathcal{Z}$-stable algebra $A$ with a 
unique tracial state such that $\mathrm{Out}(A)$ is not a normal subgroup of 
$\mathrm{Pic}(A)$, $\mathcal{Z}$-stable stably projectionless C$^*$-algebras are in this sense more 
well-behaved than unital stably finite $\mathcal{Z}$-stable C$^*$-algebras. 
Let $\mathcal{W}_{2}$ be the Razak-Jacelon algebra studied in \cite{J}, \cite{Rob}, which has trivial K-groups and a unique tracial state and no unbounded trace. 
Then $\mathcal{W}_{2}$ is $\mathcal{Z}$-stable, and hence the sequence above is exact 
in this case. Moreover we shall show that the exact sequence above splits. 
Therefore $\mathrm{Pic}(\mathcal{W}_{2})$ is isomorphic to 
$\mathrm{Out}(\mathcal{W}_{2}) \rtimes \mathbb{R}_{+}^\times$. 

Based on the result of Matui and Sato, and Kirchberg's central sequence algebras, for any separable simple infinite-dimensional non-type I nuclear C$^*$-algebra with 
a finite dimensional lattice of densely defined lower semicontinuous traces, 
we shall show that $\mathcal{Z}$-stability and strict comparison are equivalent. 
(It is important to consider property (SI).) 

In particular, if $A$ is  a simple C$^*$-algebra with 
a finite dimensional lattice of densely defined lower semicontinuous traces in the class of 
Robert's classification theorem (\cite[Corollary 6.2.4]{Rob}), then $A$ is $\mathcal{Z}$-stable. 
Moreover we see that there are many examples that the sequence above is exact. 
But we do not know whether the exact sequence above splits in this case. 
This question is related to the existence of a one parameter trace scaling automorphism 
group of $A\otimes \mathbb{K}$. 
In the final part of this paper we shall give some remarks and a reason of the notation of $\mathcal{W}_{2}$. 
Some results show every separable simple $\mathcal{Z}$-stable 
stably projectionless  C$^*$-algebra $A$ with a unique tracial state has similar properties of (McDuff) II$_1$ factors. 

\section{The Picard group}\label{sec:Picard}
In this section we shall review basic facts on the Picard groups of 
C$^*$-algebras introduced by Brown, Green and Rieffel 
in \cite{BGR} and some results in \cite{Na}. 

Let $A$ be a C$^*$-algebra and $\mathcal{X}$ a right Hilbert $A$-module, 
and let $\mathcal{H}(A)$ denote the 
set of isomorphic classes $[\mathcal{X}]$ of 
countably generated right Hilbert $A$-modules. 
We denote by $L_A(\mathcal{X})$ 
the algebra of the adjointable operators on $\mathcal{X}$. 
For $\xi,\eta \in \mathcal{X}$, a  "rank one operator" $\Theta_{\xi,\eta}$ 
is defined by $\Theta_{\xi,\eta}(\zeta) 
= \xi \langle\eta,\zeta\rangle_A$ for $\zeta \in \mathcal{X}$. 
We denote by $K_A(\mathcal{X})$ the closure 
of the linear span of "rank one operators" $\Theta_{\xi,\eta}$ 
and by $\mathbb{K}$ the C$^*$-algebra of compact operators on an infinite-dimensional 
separable Hilbert space. 
Let $\mathcal{X}_A$ be a right Hilbert $A$-module $A$ with the obvious right $A$-action and 
$\langle a ,b\rangle_A = a^*b$ for $a,b \in A$. 
Then there exists a natural isomorphism of $K_A(\mathcal{X}_A)$ 
to $A$, where $A$ acts on $\mathcal{X}_A$ by left multiplication. 
Hence if $A$ is unital, then $K_A(\mathcal{X}_A)=L_A(\mathcal{X}_A)$. 
A multiplier algebra, denote by $M(A)$, of a C$^*$-algebra $A$ is 
the largest unital C$^*$-algebra that contains $A$ as an essential ideal. 
It is unique up to isomorphism over $A$ and isomorphic to $L_A(\mathcal{X}_A)$. 
Let $H_A$ denote the standard Hilbert module 
$\{(x_n)_{n\in \mathbb{N}}\; |\;x_n\in A,\sum x_n^*x_n\;\mathrm{converges}\; \mathrm{in}\; A\}$ 
with an $A$-valued inner product 
$\langle (x_n)_{n\in\mathbb{N}},(y_n)_{n\in\mathbb{N}}\rangle =\sum x_n^*y_n$. 
Then there exists a natural isomorphism of $A\otimes\mathbb{K}$ to 
$K_A(H_A)$. 

Let $A$ and $B$ be C$^*$-algebras. 
An $A$-$B$-{\it equivalence bimodule} is an $A$-$B$-bimodule $\mathcal{F}$ which is 
simultaneously a 
full left Hilbert $A$-module under a left $A$-valued inner product $_A\langle\cdot ,\cdot\rangle$ 
and a full right Hilbert $B$-module under a right $B$-valued inner product $\langle\cdot ,\cdot\rangle_B$, 
satisfying $_A\langle\xi ,\eta\rangle\zeta =\xi\langle\eta ,\zeta\rangle_B$ for any 
$\xi, \eta, \zeta \in \mathcal{F}$. We say that $A$ is {\it Morita equivalent} to $B$ 
if there exists an $A$-$B$-equivalence bimodule. 
There exists an isomorphism $\varphi$ of $A$ to $K_B(\mathcal{F})$ such that $\varphi ( _A\langle \xi ,\eta \rangle )=\Theta_{\xi ,\eta}$ 
for any $\xi ,\eta\in \mathcal{F}$. 
The standard Hilbert module $H_A$ can be regard as an $A\otimes\mathbb{K}$-$A$-equivalence bimodule. 
A dual module $\mathcal{F}^*$ of an $A$-$B$-equivalence bimodule $\mathcal{F}$ is a set 
$\{\xi^* ;\xi\in\mathcal{F} \}$ with the operations such that $\xi^* +\eta^*=(\xi +\eta )^*$, 
$\lambda\xi ^*=(\overline{\lambda}\xi)^*$, $b\xi^* a=(a^*\xi b^*)^*$, 
$_B\langle\xi^*,\eta^*\rangle =\langle\eta ,\xi\rangle_B$ and 
$\langle \xi^*,\eta^*\rangle_A =\;_A\langle\eta ,\xi\rangle$. 
The bimodule $\mathcal{F}^*$ is a $B$-$A$-equivalence bimodule. 
We refer the reader to \cite{RW} and \cite{R2} for the basic facts on 
equivalence bimodules and Morita equivalence. 
For $A$-$A$-equivalence bimodules 
$\mathcal{E}_1$ and 
$\mathcal{E}_2$, we say that $\mathcal{E}_1$ is isomorphic to $\mathcal{E}_2$ as an equivalence 
bimodule if there exists a $\mathbb{C}$-linear one-to-one map $\Phi$ of $\mathcal{E}_1$ onto 
$\mathcal{E}_2$ with the properties such that $\Phi (a\xi b)=a\Phi (\xi )b$, 
$_A\langle \Phi (\xi ) ,\Phi(\eta )\rangle =\;_A\langle \xi ,\eta\rangle$ and 
$\langle \Phi (\xi ) ,\Phi(\eta )\rangle_A =\langle\xi,\eta\rangle_A$ for $a,b\in A$, 
$\xi ,\eta\in\mathcal{E}_1$. 
The set of isomorphic classes $[\mathcal{E}]$ of the $A$-$A$-equivalence 
bimodules $\mathcal{E}$ forms a group under the product defined by 
$[\mathcal{E}_1][\mathcal{E}_2] = [\mathcal{E}_1 \otimes_A \mathcal{E}_2]$. 
We call it the {\it Picard group} of $A$ and denote it  by  $\mathrm{Pic}(A)$. 
The identity of $\mathrm{Pic}(A)$ is given by 
the $A$-$A$-bimodule $\mathcal{E}:= A$ with  
$\; _A\langle a_1 ,a_2 \rangle = a_1a_2^*$ and $\langle a_1 ,a_2\rangle_A = a_1^*a_2$ for 
$a_1,a_2 \in A$. The inverse element of $[\mathcal{E}]$ in the Picard group of $A$ 
is the dual module $[\mathcal{E}^*]$. 
Let $\alpha$ be an automorphism of $A$, and let 
$\mathcal{E}_{\alpha}^A=A$ with the obvious left $A$-action and the obvious $A$-valued inner product. 
We define the right $A$-action on $\mathcal{E}_\alpha^A$ by 
$\xi\cdot a=\xi\alpha(a)$ for 
any $\xi\in\mathcal{E}_\alpha^A$ and $a\in A$, and the right $A$-valued inner product by 
$\langle\xi ,\eta\rangle_A=\alpha^{-1} (\xi^*\eta)$ for any $\xi ,\eta\in\mathcal{E}_\alpha^A$.
Then $\mathcal{E}_{\alpha}^A$ is an $A$-$A$-equivalence bimodule. For $\alpha, \beta\in\mathrm{Aut}(A)$, 
$\mathcal{E}_\alpha^A$ is isomorphic to $\mathcal{E}_\beta^A$ if and only if 
there exists a unitary $u \in M(A)$ such that 
$\alpha = ad \ u \circ \beta $. Moreover, ${\mathcal E}_\alpha^A \otimes 
{\mathcal E}_\beta^A$ is 
isomorphic to $\mathcal{E}_{\alpha\circ\beta}^A$. Hence we obtain an homomorphism $\rho_A$ 
of $\mathrm{Out}(A)$ to $\mathrm{Pic}(A)$. 
Note that for any $\alpha\in\mathrm{Aut}(A)$, $\mathcal{E}_\alpha^A$ is isomorphic to $\mathcal{X}_A$ as a right Hilbert $A$-module. 
Conversely we have the following proposition. 

\begin{pro}\label{pro:trivial right module}
Let $\mathcal{E}$ be an $A$-$A$-equivalence bimodule such that 
$\mathcal{E}$ is isomorphic to $\mathcal{X}_A$ as a right Hilbert $A$-module. 
Then there exists an automorphism $\alpha$ of $A$ such that 
$\mathcal{E}$ is isomorphic to ${\mathcal E}_\alpha^A$ as an $A$-$A$-equivalence bimodule. 
\end{pro}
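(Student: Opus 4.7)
The plan is to use the right Hilbert $A$-module isomorphism $\Phi : \mathcal{X}_A \to \mathcal{E}$ to transport the left $A$-action and the left $A$-valued inner product of $\mathcal{E}$ onto $\mathcal{X}_A = A$, and then to identify the resulting bimodule with some $\mathcal{E}_\alpha^A$. Since any left $A$-action on the right Hilbert $A$-module $\mathcal{X}_A$ that is compatible with the right $A$-action arises from a $*$-homomorphism into $L_A(\mathcal{X}_A) = M(A)$, the transported left action has the form $a \cdot x = \pi(a)\,x$ for some $*$-homomorphism $\pi : A \to M(A)$, while the right action on $A$ remains ordinary multiplication.

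The key step is to show that $\pi$ is actually an automorphism of $A$. Because $\mathcal{E}$ is an $A$-$A$-equivalence bimodule, the left action induces an isomorphism $A \to K_A(\mathcal{E})$ (as recalled in the excerpt). Conjugation by $\Phi$ is an isomorphism $K_A(\mathcal{E}) \to K_A(\mathcal{X}_A)$, and under the natural identification $K_A(\mathcal{X}_A) = A \subset M(A)$, this forces $\pi(A) = A$ and $\pi$ to be an automorphism. Set $\alpha := \pi$. Using the associativity axiom ${}_A\langle\xi,\eta\rangle\zeta = \xi\langle\eta,\zeta\rangle_A$ in the transported structure on $A$ gives
\[
\alpha\bigl({}_A\langle x,y\rangle\bigr)\,z \;=\; x\,y^*\,z \qquad (x,y,z\in A),
\]
so the transported left inner product equals ${}_A\langle x,y\rangle = \alpha^{-1}(xy^*)$.

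Finally, I would check directly that the map $\Psi : \mathcal{X}_A \to \mathcal{E}_{\alpha^{-1}}^A$ defined by $\Psi(x) = \alpha^{-1}(x)$ intertwines both actions and both inner products: the left action $a\cdot x = \alpha(a)x$ is sent to $a\Psi(x)$; the right action $xa$ is sent to $\Psi(x)\alpha^{-1}(a)$, matching the right action in $\mathcal{E}_{\alpha^{-1}}^A$; and the two inner products transform correctly, since $\alpha^{-1}(x)\alpha^{-1}(y)^* = \alpha^{-1}(xy^*)$ and $\alpha\bigl(\alpha^{-1}(x)^*\alpha^{-1}(y)\bigr) = x^*y$. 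Composing, $\Psi \circ \Phi^{-1} : \mathcal{E} \to \mathcal{E}_{\alpha^{-1}}^A$ is the desired equivalence bimodule isomorphism (taking the automorphism in the statement to be $\alpha^{-1}$).

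The principal obstacle is the second step, namely verifying that $\pi$ lands in (and surjects onto) $A \subset M(A)$ rather than merely in the multiplier algebra. This is where the hypothesis that $\mathcal{E}$ is a \emph{full} equivalence bimodule is essential: without the identification $A \cong K_A(\mathcal{E})$ induced by the left action, there is no reason for the transported representation to have image $A$. Once $\alpha$ is in hand, the remaining verifications are routine bookkeeping of the bimodule axioms.
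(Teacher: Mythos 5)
Your proposal is correct and follows essentially the same route as the paper: both transport the structure of $\mathcal{E}$ to $\mathcal{X}_A$ via $\Phi$, use the isomorphism $A\cong K_A(\mathcal{E})$ coming from the left action together with $K_A(\mathcal{X}_A)\cong A$ to produce the automorphism (your $\pi=\psi\circ\varphi$ is the inverse of the paper's $\alpha$, so the two constructions yield the same bimodule $\mathcal{E}_{\pi^{-1}}^A$), and then verify the bimodule axioms directly. The only cosmetic difference is that you phrase the key step as showing the transported left action lands in $A\subset M(A)$, which the paper packages into the statement that $\psi\circ\varphi$ is an isomorphism of $A$.
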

\begin{proof}
Let $\Phi$ be a right Hilbert $A$-module isomorphism of $\mathcal{X}_{A}$ to $\mathcal{E}$, and let 
$\psi$ be an isomorphism of $K_A(\mathcal{E})$ to $K_A(\mathcal{X}_{A})$ induced by $\Phi$. 
Since $K_A(\mathcal{X}_{A})$ is naturally isomorphic to $A$, we may regard $\psi$ as an isomorphism of 
$K_A(\mathcal{E})$ to $A$. 
There exists an isomorphism $\varphi$ of $A$ to $K_A(\mathcal{E})$ such that 
$\varphi (_A\langle \xi ,\eta \rangle) =\Theta_{\xi ,\eta}$ for any $\xi ,\eta\in \mathcal{E}$ because 
$\mathcal{E}$ is an $A$-$A$-equivalence bimodule. 

Put $\alpha :=(\psi \circ \varphi )^{-1}$, and define a map $F$ of $\mathcal{E}$ to $\mathcal{E}_{\alpha}^A$ by 
$F(\Phi (a) ):=\alpha (a)$ for any $a\in A$. 
Note that we have 
$$
_A \langle \Phi (a),\Phi (b)\rangle =\varphi ^{-1} (\Theta_{\Phi (a),\Phi (b)})= \varphi^{-1}\circ \psi^{-1} (ab^*) =\alpha (ab^*)
$$
and 
$$
a\cdot \Phi (b)=\varphi (a)\Phi (b) =\Phi (\psi \circ \varphi (a)b) =\Phi (\alpha^{-1}(a)b)
$$
for any $a,b\in A$. Therefore it can easily be checked that $F$ is an $A$-$A$-equivalence bimodule isomorphism. 
\end{proof}

An $A$-$B$-equivalence bimodule $\mathcal{F}$ induces an isomorphism $\Psi$ 
of $\mathrm{Pic}(A)$ to $\mathrm{Pic}(B)$ by 
$\Psi ([\mathcal{E}])=[\mathcal{F}^*\otimes\mathcal{E}\otimes\mathcal{F}]$ 
for $[\mathcal{E}]\in\mathrm{Pic}(A)$. 
Therefore if $A$ is Morita equivalent to $B$, then $\mathrm{Pic}(A)$ is isomorphic to 
$\mathrm{Pic}(B)$. Brown, Green and Rieffel showed that if $A$ is $\sigma$-unital, 
then $\mathrm{Pic}(A)$ is isomorphic to $\mathrm{Out}(A\otimes\mathbb{K})$ 
(see \cite[Theorem 3.4 and Corollary 3.5]{BGR}). 
Indeed a homomorphism $\rho_{A\otimes\mathbb{K}}$ of $\mathrm{Aut}(A\otimes\mathbb{K})$ to $\mathrm{Pic}(A\otimes\mathbb{K})$ induces 
an isomorphism of $\mathrm{Out}(A\otimes\mathbb{K})$ onto $\mathrm{Pic}(A\otimes\mathbb{K})$.

A sequence $\{\xi_{i}\}_{i\mathbb{N}}$ of a right Hilbert $A$-module $\mathcal{X}$ is called 
\textit{countable basis} of $\mathcal{X}$ if $\eta =\sum_{i=1}^\infty\xi_i\langle\xi_i,\eta\rangle_A$ in norm 
for any $\eta\in\mathcal{X}$. 
If $K_A(\mathcal{X})$ is $\sigma$-unital, then $\mathcal{X}$ has a countable basis. 
A sequence $\{\xi_{i}\}_{i\mathbb{N}}$ is a countable basis if and only if $\{\sum_{i=1}^N\Theta_{\xi_i,\xi_i}\}_{N\in\mathbb{N}}$ is 
an approximate unit for $K_A(\mathcal{X})$. 
See \cite{KPW}, \cite{KW}, \cite{Na} and \cite{W} for details of bases of Hilbert modules. 
We denote by $T(A)$ the set of densely defined lower semicontinuous traces on $A$ and 
$T_1(A)$ the set of tracial states on $A$. 
We have the following proposition. 
\begin{pro}(\cite[Proposition 2.4]{Na}) \\
Let $A$ be a simple $\sigma$-unital C$^*$-algebra and $\mathcal{X}$ a countably generated Hilbert $A$-module, and let $\tau$ be a densely defined lower 
semicontinuous trace on $A$. 
For $x\in K_A(\mathcal{X})_+$, 
define 
\[Tr_\tau^\mathcal{X}(x):=\sum_{i=1}^\infty \tau (\langle \xi_i,x\xi_i\rangle_A) \]
where $\{\xi_i\}_{i=1}^\infty$ is a countable basis of $\mathcal{X}$. 
Then $Tr_\tau^\mathcal{X}$ does not depend on the choice of basis and is a densely defined (resp. strictly 
densely defined) lower semicontinuous trace on $K_A(\mathcal{X})$ (resp. $L_A(\mathcal{X})$). 
\end{pro}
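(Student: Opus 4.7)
The plan is to establish the four assertions—basis-independence, the trace identity, lower semicontinuity, and density of the finite-trace domain—in that order, each reducing to the basis expansion $\zeta=\sum_i\xi_i\langle\xi_i,\zeta\rangle_A$ combined with cyclicity and $\sigma$-normality of $\tau$ on monotone norm-convergent sequences.

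For basis-independence, fix two countable bases $\{\xi_i\}$ and $\{\eta_j\}$ and $x\in K_A(\mathcal{X})_+$. Using $\langle\xi_i,x\xi_i\rangle_A=\langle x^{1/2}\xi_i,x^{1/2}\xi_i\rangle_A$ and inserting the $\{\eta_j\}$-expansion of $x^{1/2}\xi_i$, the identity
\[\langle x^{1/2}\xi_i,x^{1/2}\xi_i\rangle_A=\sum_j\langle x^{1/2}\xi_i,\eta_j\rangle_A\langle\eta_j,x^{1/2}\xi_i\rangle_A\]
has partial sums that are increasing and norm-convergent in $A_+$, so lower semicontinuity of $\tau$ licenses termwise application. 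Cyclicity of $\tau$ then rewrites each term as $\tau(\langle\eta_j,x^{1/2}\xi_i\rangle_A\langle x^{1/2}\xi_i,\eta_j\rangle_A)$; Tonelli for nonnegative double series exchanges the $i$- and $j$-sums; and collapsing the $i$-sum via the $\{\xi_i\}$-expansion applied to $x^{1/2}\eta_j$ yields $\sum_j\tau(\langle\eta_j,x\eta_j\rangle_A)$. The trace property $Tr_\tau^{\mathcal{X}}(y^*y)=Tr_\tau^{\mathcal{X}}(yy^*)$ for $y\in K_A(\mathcal{X})$ follows from the same manipulation with $x^{1/2}\xi_i$ replaced by $y\xi_i$ and a single basis $\{\xi_i\}$.

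Lower semicontinuity is immediate: each summand $x\mapsto\tau(\langle\xi_i,x\xi_i\rangle_A)$ is LSC as the composition of a norm-continuous linear map and the LSC functional $\tau$, and on $K_A(\mathcal{X})_+$ the partial sums are increasing with supremum $Tr_\tau^{\mathcal{X}}$, hence LSC. For density of the domain, choose an approximate unit $\{e_\lambda\}\subset\mathrm{Ped}(A)_+$ of $A$, which exists because the Pedersen ideal is a dense two-sided hereditary ideal contained in $\mathrm{dom}(\tau)$. For any $\eta\in\mathcal{X}$, set $\xi:=\eta\cdot e_\lambda^{1/2}$; then $\langle\xi,\xi\rangle_A=e_\lambda^{1/2}\langle\eta,\eta\rangle_A e_\lambda^{1/2}\le\|\eta\|^2 e_\lambda\in\mathrm{Ped}(A)$ by hereditarity, so $Tr_\tau^{\mathcal{X}}(\Theta_{\xi,\xi})=\tau(\langle\xi,\xi\rangle_A)<\infty$ (verified directly from the definition). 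Since $\xi\to\eta$ and finite linear combinations of rank-one operators are norm-dense in $K_A(\mathcal{X})$, the finite-trace elements form a dense two-sided ideal. For the strictly dense claim in $L_A(\mathcal{X})$, the partial sums $p_N=\sum_{i\le N}\Theta_{\xi_i e_\lambda^{1/2},\xi_i e_\lambda^{1/2}}$ converge strictly to an element of $K_A(\mathcal{X})$ implementing an approximate unit, and for $T\in L_A(\mathcal{X})$ the compressions $p_N T p_N$ converge strictly to $T$ and satisfy $Tr_\tau^{\mathcal{X}}(p_N T p_N)\le\|T\|\,Tr_\tau^{\mathcal{X}}(p_N^2)<\infty$ via the trace identity already proved.

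The principal obstacle is the careful interchange of $\tau$ with infinite sums, which is invoked twice (once inside each fixed-$i$ expansion before Tonelli, and once after the swap when re-collapsing the $i$-sum); both steps rely on $\sigma$-normality of densely defined LSC traces on bounded increasing norm-convergent sequences in $A_+$, a property that must be stated precisely before the computation. A secondary technical point is that the cyclicity $\tau(ab)=\tau(ba)$ is only guaranteed when both $a^*a$ and $aa^*$ lie in the domain of $\tau$; here this is automatic because $a=\langle\eta_j,x^{1/2}\xi_i\rangle_A$ gives $a^*a,aa^*\in A_+$ dominated by elements with finite trace that arise from the basis expansion, which is what makes the whole argument consistent.
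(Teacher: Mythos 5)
The paper does not actually prove this proposition here --- it is imported verbatim from \cite[Proposition 2.4]{Na} --- so there is no in-text proof to compare against; but your argument runs on exactly the same engine the paper uses for the adjacent trace-correspondence proposition (Proposition 2.3): basis expansion of $x^{1/2}\xi_i$, the trace identity $\tau(c^*c)=\tau(cc^*)$, Tonelli for nonnegative double series, and termwise application of $\tau$ to increasing norm-convergent sequences via lower semicontinuity and monotonicity. All of those steps are sound as you state them, and your "secondary technical point" about cyclicity is actually unnecessary, since $\tau(c^*c)=\tau(cc^*)$ holds unconditionally in $[0,\infty]$ for a trace. The one wobbly spot is the strict-density step: for fixed $\lambda$ the sums $\sum_{i\le N}\Theta_{\xi_i e_\lambda^{1/2},\xi_i e_\lambda^{1/2}}$ need not converge in $K_A(\mathcal{X})$, and "converge strictly to an element of $K_A(\mathcal{X})$" is not the right formulation; the clean fix is to note that the finite-trace elements form a dense (two-sided, hereditary) ideal $J$ of the $\sigma$-unital algebra $K_A(\mathcal{X})$, take an approximate unit $(u_n)$ for $K_A(\mathcal{X})$ inside $J$, observe that it converges strictly to $1_{L_A(\mathcal{X})}$, and then compress: $u_nTu_n\to T$ strictly with $Tr_\tau^{\mathcal{X}}(u_nTu_n)\le\|T\|\,Tr_\tau^{\mathcal{X}}(u_n^2)<\infty$. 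With that repair the proof is complete.
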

The following proposition is \cite[Remark 2.5]{Na}. Moreover it is well-known (see for example \cite{CZ}). But we include the proof for completeness. 
\begin{pro}\label{pro:trace correspondence}
Let $A$ be a simple $\sigma$-unital C$^*$-algebra and $\mathcal{X}$ a countably generated Hilbert $A$-module. 
Then there exists a bijective correspondence between $T(A)$ and $T(K_A(\mathcal{X}))$. 
\end{pro}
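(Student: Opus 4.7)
The forward map $\tau\mapsto Tr_\tau^{\mathcal{X}}$ is supplied by the preceding proposition; the task is to produce an inverse. We may assume $\mathcal{X}\neq 0$ (the zero case being trivial). Set
$$B:=\overline{\mathrm{span}\{\langle\xi,\eta\rangle_A:\xi,\eta\in\mathcal{X}\}},$$
a hereditary $\ast$-subalgebra of $A$. Since $\mathcal{X}$ is countably generated, the partial sums $\sum_{i=1}^N\langle\xi_i,\xi_i\rangle_A$ of a countable basis form an approximate unit for $B$, so $B$ is $\sigma$-unital; since $A$ is simple and $B\neq 0$, $B$ is full in $A$.

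My strategy is to factor the claimed bijection as $T(K_A(\mathcal{X}))\leftrightarrow T(B)\leftrightarrow T(A)$. For the first correspondence, observe that $\mathcal{X}$ is in a natural way a $K_A(\mathcal{X})$-$B$-equivalence bimodule. I would define $\tau_B(\langle\xi,\eta\rangle_A):=T(\Theta_{\eta,\xi})$ and extend by linearity; the trace axiom for $\tau_B$ follows from that for $T$ via the multiplication formula for rank-one operators, and the converse direction is given by applying the preceding proposition with $\mathcal{X}$ regarded as a right $B$-module. This correspondence preserves lower semicontinuity and dense definition.

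For the second correspondence, restriction yields $T(A)\to T(B)$. To invert, I would extend $\tau_B\in T(B)$ to a densely defined l.s.c.\ trace on $A$ using fullness: every $a\in A_+$ is a directed supremum of finite sums $\sum_k c_k^* b_k c_k$ with $b_k\in B_+$ and $c_k\in A$, and the trace property determines a unique value on such expressions, with lower semicontinuity then yielding the extension to all of $A_+$. Uniqueness is forced by simplicity; alternatively, one may invoke Brown's stabilization $B\otimes\mathbb{K}\cong A\otimes\mathbb{K}$ (valid because $B$ is a full $\sigma$-unital hereditary subalgebra) together with Morita-invariance of traces. Composing the two correspondences gives a map $T(K_A(\mathcal{X}))\to T(A)$, and a direct computation $Tr_\tau^{\mathcal{X}}(\Theta_{\xi,\xi})=\sum_i\tau(\langle\xi,\xi_i\rangle_A\langle\xi_i,\xi\rangle_A)=\tau(\langle\xi,\xi\rangle_A)$, using a countable basis $\{\xi_i\}$, the basis expansion of $\xi$, and lower semicontinuity of $\tau$, shows this composition inverts $\tau\mapsto Tr_\tau^{\mathcal{X}}$.

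The principal obstacle is the second correspondence: the extension of a densely defined l.s.c.\ trace from a full $\sigma$-unital hereditary subalgebra to the ambient simple C$^*$-algebra, with lower semicontinuity and the correct domain preserved. This is the folklore fact acknowledged by the reference \cite{CZ} in the statement, and requires care with approximate units, monotone limits, and the interplay between density of the trace's domain and fullness of the hereditary subalgebra.
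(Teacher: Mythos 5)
Your factorization through $B:=\overline{\mathrm{span}}\{\langle\xi,\eta\rangle_A : \xi,\eta\in\mathcal{X}\}$ rests on a missed observation that both simplifies and partially undermines the plan: $B$ is not merely a hereditary $*$-subalgebra but a closed two-sided ideal of $A$ (one has $\langle\xi,\eta\rangle_A a=\langle\xi,\eta a\rangle_A$ and $B^*=B$, hence $AB\subseteq B$ as well), so simplicity forces $B=A$ outright. Consequently the ``second correspondence,'' which you single out as the principal obstacle and do not actually carry out, is vacuous --- there is nothing to extend. What remains is your first leg, and there the proposal has a genuine unaddressed point: defining $\tau_B$ on the linear span of inner products by $\tau_B(\langle\xi,\eta\rangle_A):=T(\Theta_{\eta,\xi})$ and ``extending by linearity'' requires checking well-definedness (the inner products satisfy many linear relations), positivity, lower semicontinuity and dense definition, none of which is automatic.

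The paper takes a different and cleaner route that avoids both issues: since $A$ is simple, $\mathcal{X}$ is automatically a full right Hilbert $A$-module, hence a $K_A(\mathcal{X})$-$A$-equivalence bimodule, and the dual module $\mathcal{X}^*$ is an $A$-$K_A(\mathcal{X})$-equivalence bimodule. The inverse map is then simply $\sigma\mapsto Tr_{\sigma}^{\mathcal{X}^*}$, i.e.\ the same construction as the forward map applied to $\mathcal{X}^*$; it is well defined, lower semicontinuous and densely defined by the preceding proposition, with no ad hoc linear extension needed. The only thing left to verify is $Tr_{Tr_{\tau}^{\mathcal{X}}}^{\mathcal{X}^*}=\tau$, which the paper does by a direct computation with countable bases of $\mathcal{X}$ and $\mathcal{X}^*$ using the trace property and lower semicontinuity (essentially the rank-one computation you sketch at the end, pushed through carefully), and the reverse composition follows by symmetry since $(\mathcal{X}^*)^*\cong\mathcal{X}$. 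If you replace your linearly extended $\tau_B$ by $Tr_{\sigma}^{\mathcal{X}^*}$ and note that $B=A$, your argument collapses onto the paper's.
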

\begin{proof}
Since a right Hilbert $A$-module $\mathcal{X}$ is a $K_A(\mathcal{X})$-$A$-equivalence bimodule, 
$\mathcal{X}^*$ is an $A$-$K_A(\mathcal{X})$-equivalence bimodule. 
Let $\{\xi_j\}_{j\in\mathbb{N}}$ be a countable basis of $\mathcal{X}$ and 
$\{\eta^*_i\}_{i\in\mathbb{N}}$ a countable basis of $\mathcal{X}^*$. 
For any $a\in A_{+}$ and $\tau\in T(A)$, we have 
\begin{align*}
Tr_{Tr_\tau^\mathcal{X}}^{\mathcal{X}^*}(a)
& =\lim_{n\rightarrow\infty}\sum_{i=1}^n Tr_{\tau}^\mathcal{X} (\langle\eta^*_i,a\eta^*_i
\rangle_{K_A(\mathcal{X})}) \\
& =\lim_{n\rightarrow\infty}\sum_{i=1}^n Tr_\tau^\mathcal{X} (_{K_A(\mathcal{X})}
\langle \eta_i a^{\frac{1}{2}},\eta_i a^{\frac{1}{2}}\rangle ) \\
& =\lim_{n\rightarrow\infty}\sum_{i=1}^n Tr_\tau^\mathcal{X} (\Theta_{\eta_i a^{\frac{1}{2}},
\eta_i a^{\frac{1}{2}}}) \\
& =\lim_{n\rightarrow\infty}\sum_{i=1}^n\sum_{j=1}^\infty\tau (
\langle \xi_j, \eta_i a^{\frac{1}{2}}\langle \eta_i a^{\frac{1}{2}},\xi_j \rangle_A \rangle_A ) \\
& =\lim_{n\rightarrow\infty}\sum_{i=1}^n\sum_{j=1}^\infty\tau (
\langle\xi_j ,\eta_i a^{\frac{1}{2}} \rangle_A \langle \eta_i a^{\frac{1}{2}}, \xi_j\rangle_A ) \\
& =\lim_{n\rightarrow\infty}\sum_{i=1}^n\sum_{j=1}^\infty\tau (
\langle \eta_i a^{\frac{1}{2}}, \xi_j\rangle_A \langle\xi_j ,\eta_i a^{\frac{1}{2}} \rangle_A) \\
& =\lim_{n\rightarrow\infty}\sum_{i=1}^n \sum_{j=1}^\infty \tau (
\langle\eta_i a^{\frac{1}{2}},\xi_j\langle\xi_j ,\eta_i a^{\frac{1}{2}}\rangle_A \rangle_A ). 
\end{align*}
Since $\{\xi_j\}_{j\in\mathbb{N}}$ is a countable basis of $\mathcal{X}$, we see that 
$$\langle\eta_i a^{\frac{1}{2}},\sum_{j=1}^m\xi_j\langle\xi_j ,\eta_i a^{\frac{1}{2}}\rangle_A 
\rangle_A \nearrow \langle\eta_i a^{\frac{1}{2}},\eta_i a^{\frac{1}{2}}\rangle_A\;\;
(m\rightarrow \infty).
$$ 
Hence 
$$\lim_{n\rightarrow\infty}\sum_{i=1}^n \sum_{j=1}^\infty \tau (
\langle\eta_i a^{\frac{1}{2}},\xi_j\langle\xi_j ,\eta_i a^{\frac{1}{2}}\rangle_A \rangle_A )
=\lim_{n\rightarrow\infty}\sum_{i=1}^n \tau 
(\langle\eta_i a^{\frac{1}{2}},\eta_i a^{\frac{1}{2}}\rangle_A)$$
by the lower semicontinuity of $\tau$. 
Since $\Theta_{\eta_i^*,\eta_i^*}$ is corresponding to $\langle \eta_i,\eta_i\rangle_A$, 
we see that $\{\sum_{i=1}^n\langle \eta_i,\eta_i\rangle_A \}_{n\in\mathbb{N}}$ is an approximate unit 
for $A$. 
Therefore we have 
$$
\lim_{n\rightarrow\infty}\sum_{i=1}^n \tau 
(\langle\eta_i a^{\frac{1}{2}},\eta_i a^{\frac{1}{2}}\rangle_A)
=\lim_{n\rightarrow\infty}\tau (a^{\frac{1}{2}}(\sum_{i=1}^n\langle \eta_i,\eta_i\rangle_A
)a^{\frac{1}{2}})=\tau (a)
$$
by the lower semicontinuity of $\tau$. 
Consequently $Tr_{Tr_\tau^\mathcal{X}}^{\mathcal{X}^*}=\tau$. 

Since $(\mathcal{X}^*)^*$ is naturally isomorphic to $\mathcal{X}$ as a $K_A(\mathcal{X})$-$A$-equivalence bimodule, 
we see that $Tr_{Tr_\tau^{\mathcal{X}^*}}^{\mathcal{X}}=\tau$ for any $\tau\in T(K_A(\mathcal{X}))$ as above. 
Hence we obtain the conclusion. 
\end{proof}
The following Corollary is folklore. 
\begin{cor}\label{cor:folklore trace}
Let $A$ be a simple $\sigma$-unital C$^*$-algebra and $h$ be a non-zero positive element in $A$. 
Then every densely defined lower semicontinuous trace on $\overline{hAh}$ is a restriction of some densely defined lower semicontinuous trace on $A$. 
\end{cor}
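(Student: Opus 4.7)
The plan is to realize $B := \overline{hAh}$ as Morita equivalent to $A$ via a concrete bimodule, and then transport traces using Proposition \ref{pro:trace correspondence}. Set $\mathcal{X} := \overline{hA}$, with right $A$-action by multiplication and $A$-valued inner product $\langle \xi,\eta\rangle_A := \xi^*\eta$. Since $A$ is simple and $h\neq 0$, the element $h$ is full, so $\mathcal{X}$ is a full right Hilbert $A$-module, singly (hence countably) generated by $h$. A short check gives $\Theta_{\xi,\eta}(\zeta) = \xi\eta^*\zeta$, so $\Theta_{\xi,\eta}\mapsto \xi\eta^*$ extends to an isomorphism $K_A(\mathcal{X})\cong B$. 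In particular, $B$ is $\sigma$-unital with strictly positive element $h$ (because $\overline{h^2Ah^2}=\overline{hAh}$), and Proposition \ref{pro:trace correspondence} yields a bijection $\tau\mapsto Tr_\tau^{\mathcal{X}}$ between $T(A)$ and $T(B)$.

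It remains to verify that $Tr_\tau^{\mathcal{X}}$ coincides with the restriction of $\tau$ to $B$; this immediately gives the corollary, since every trace on $B$ then arises as such a restriction. Choose a countable basis $\{\xi_i\}$ of $\mathcal{X}$ so that the partial sums $e_n := \sum_{i=1}^n \xi_i\xi_i^*$ form a bounded approximate unit for $B$ (achievable since $\sum_{i=1}^n \Theta_{\xi_i,\xi_i}$ is an approximate unit of $K_A(\mathcal{X})$). For $b\in B_+$, applying the trace identity $\tau(x^*x)=\tau(xx^*)$ with $x := b^{1/2}\xi_i$ gives $\tau(\xi_i^* b\xi_i) = \tau(b^{1/2}\xi_i\xi_i^* b^{1/2})$, so
\[
Tr_\tau^{\mathcal{X}}(b) = \sum_i \tau(\xi_i^* b\xi_i) = \lim_n \tau\bigl(b^{1/2} e_n b^{1/2}\bigr).
\]
Since $e_n\le 1$ implies $b^{1/2}e_nb^{1/2}\le b$, each term is at most $\tau(b)$; combined with $b^{1/2}e_nb^{1/2}\to b$ in norm and lower semicontinuity of $\tau$, the limit must equal $\tau(b)$. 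Hence $Tr_\tau^{\mathcal{X}}(b)=\tau(b)$ on $B_+$.

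The only somewhat delicate point is the identification $K_A(\mathcal{X})\cong B$ together with the choice of basis yielding a bounded approximate unit; both are standard for the natural imprimitivity bimodule between $\overline{hAh}$ and $A$. Once these are in place, the trace-restriction identity follows from one line using the trace property and lower semicontinuity, and no independent extension argument is needed because the map $\tau\mapsto \tau|_B$ now coincides with the already-surjective bijection supplied by Proposition \ref{pro:trace correspondence}.
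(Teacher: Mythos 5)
Your proposal is correct and follows essentially the same route as the paper: both take $\mathcal{X}=\overline{hA}$, invoke the surjectivity of $\tau\mapsto Tr_\tau^{\mathcal{X}}$ from Proposition \ref{pro:trace correspondence}, and then identify $Tr_\tau^{\mathcal{X}}$ with $\tau|_{\overline{hAh}}$ via the trace identity $\tau(\xi^*b\xi)=\tau(b^{1/2}\xi\xi^*b^{1/2})$ and lower semicontinuity applied to the approximate unit $\sum_i\xi_i\xi_i^*$. No substantive differences.
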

\begin{proof}
Put $\mathcal{X}=\overline{hA}$. 
Since $K_A(\overline{hA})$ is naturally isomorphic to $\overline{hAh}$, it is enough to show that 
$Tr_\tau^\mathcal{X} (\tau) =\tau |_{\overline{hAh}}$ for any $\tau\in T(A)$ by Proposition \ref{pro:trace correspondence}. 
Let $\{a_n\}_{n\in \mathbb{N}}$ be a countable basis of $\overline{hA}$. 
(Note that the norm of Hilbert $A$-module $\overline{hA}$ is equal to the norm of C$^*$-algebra $A$.)
Since $\tau (a_n^*xa_n)=\tau (x^{\frac{1}{2}}a_na_n^*x^{\frac{1}{2}})$ for any $x\in \overline{hAh}_{+}$ and $\tau\in T(A)$, 
we have $\sum_{n=1}^{N} \tau (a_n^*xa_n)\leq \sum_{n=1}^{N+1} \tau (a_n^*xa_n)$ for any $N\in\mathbb{N}$. 
Therefore we see that $Tr_{\tau}^\mathcal{X} (x) =\tau (x)$ by the lower semicontinuity of $\tau$. 
\end{proof}
For $\tau\in T(A)$, define a map $\hat{T}_{\tau}$ of $\mathcal{H}(A)$ to $[0,\infty]$ by 
$$
\hat{T}_{\tau} ([\mathcal{X}]) :=\sum_{n=1}^\infty \tau (\langle \xi_n,\xi_n\rangle_A) 
$$
where $\{\xi_n\}_{n\in\mathbb{N}}$ is a countable basis of $\mathcal{X}$. 
This map is well-defined map and does not depend on the choice of basis. Moreover 
we have $\hat{T}_{\tau}(\mathcal{X})= Tr_{\tau}^\mathcal{X}(1_{L_A(\mathcal{X})})=\| Tr_{\tau}^\mathcal{X}\|$. 

Put $d_{\tau} (h)=\lim_{n\rightarrow \infty}\tau\otimes\mathrm{Tr} (h^{\frac{1}{n}})$ for 
$h\in (A\otimes \mathbb{K})_{+}$. Then $d_{\tau}$ is a dimension function. 
We have the following proposition. 
\begin{pro}\label{propositon3.1:Na}
\cite[Proposition 3.1]{Na} \\
Let $A$ be a simple $\sigma$-unital $C^*$-algebra with unique (up to scalar multiple) 
densely defined lower semicontinuous trace $\tau$ and 
$h$ a positive element in $A\otimes \mathbb{K}$. 
Then $\hat{T}_{\tau} (\overline{hH_A})=d_{\tau}(h)$. 
\end{pro}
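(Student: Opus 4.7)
The plan is to identify $\hat T_\tau(\overline{hH_A})$ with $Tr_\tau^{\overline{hH_A}}(1_{L_A(\overline{hH_A})})$, then show that $Tr_\tau^{\overline{hH_A}}$ coincides with $(\tau\otimes\mathrm{Tr})|_{\overline{h(A\otimes\mathbb{K})h}}$ via uniqueness of the trace, and finally evaluate against a contractive approximate unit of $\overline{h(A\otimes\mathbb{K})h}$ built from the functional calculus of $h$ to recover $d_\tau(h)$.

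Set $\mathcal{X}:=\overline{hH_A}$. The identification $A\otimes\mathbb{K}\cong K_A(H_A)$ gives $K_A(\mathcal{X})\cong\overline{h(A\otimes\mathbb{K})h}$, a $\sigma$-unital hereditary subalgebra of $A\otimes\mathbb{K}$; in particular $\mathcal{X}$ has a countable basis and $\hat T_\tau(\mathcal{X})=Tr_\tau^{\mathcal{X}}(1_{L_A(\mathcal{X})})$ by the remark preceding the proposition. Define positive contractions $e_n:=f_n(h)$ with $f_n(t):=\min(nt,1)$ via functional calculus; the $e_n$ form an increasing approximate unit of $\overline{h(A\otimes\mathbb{K})h}$ converging strictly to $1_{L_A(\mathcal{X})}$. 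Strict lower semicontinuity of $Tr_\tau^{\mathcal{X}}$ on $L_A(\mathcal{X})_+$, together with $e_n\leq 1$, yields
$$\hat T_\tau(\mathcal{X})=\lim_{n\to\infty}Tr_\tau^{\mathcal{X}}(e_n).$$

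The core step is showing $Tr_\tau^{\mathcal{X}}=(\tau\otimes\mathrm{Tr})|_{\overline{h(A\otimes\mathbb{K})h}}$. Proposition \ref{pro:trace correspondence} applied with $\mathcal{X}=H_A$ promotes uniqueness of $\tau$ on $A$ to uniqueness up to scalar of the densely defined lower semicontinuous trace on $A\otimes\mathbb{K}$, and Corollary \ref{cor:folklore trace} passes this uniqueness down to $\overline{h(A\otimes\mathbb{K})h}$. Both $Tr_\tau^{\mathcal{X}}$ and $(\tau\otimes\mathrm{Tr})|_{\overline{h(A\otimes\mathbb{K})h}}$ are densely defined lower semicontinuous traces on this subalgebra, so they are proportional. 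To fix the scalar, I evaluate both on a rank-one operator $\Theta_{h\xi,h\xi}$ with $\xi\in H_A$ chosen so that $\tau(\langle\xi,h^2\xi\rangle)$ is finite and nonzero (possible because $\tau$ is densely defined and $h\neq 0$): the general identity $Tr_\tau^{\mathcal{X}}(\Theta_{\zeta,\zeta})=\tau(\langle\zeta,\zeta\rangle_A)$, obtained by expanding $\zeta$ in any countable basis of $\mathcal{X}$, produces $\tau(\langle\xi,h^2\xi\rangle)$, while viewing $\Theta_{h\xi,h\xi}$ as the rank-one matrix $(h\xi)_i(h\xi)_j^*$ in $A\otimes\mathbb{K}$ and invoking the trace property of $\tau$ gives $\sum_i\tau((h\xi)_i^*(h\xi)_i)=\tau(\langle\xi,h^2\xi\rangle)$ as well. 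Hence the proportionality constant equals $1$.

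Combining the two displays, $\hat T_\tau(\mathcal{X})=\lim_n(\tau\otimes\mathrm{Tr})(e_n)=d_\tau(h)$; the last equality holds because $\{e_n\}$ and $\{h^{1/n}\}$ both converge strongly to the support projection of $h$ and $\tau\otimes\mathrm{Tr}$ is normal, so both sequences yield the same limit. The main obstacle I anticipate is pinning the proportionality constant to exactly $1$: the uniqueness-of-trace argument delivers only proportionality, and its resolution hinges on the explicit rank-one computation together with producing a witness $\xi\in H_A$ satisfying $0<\tau(\langle\xi,h^2\xi\rangle)<\infty$, which combines the density of $\tau$ with $h\neq 0$.
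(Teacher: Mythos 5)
Your argument is essentially correct, but note that the paper does not actually prove this statement: it is imported verbatim from \cite[Proposition 3.1]{Na}, so there is no in-paper proof to match against. What you have written is a legitimate self-contained derivation assembled from the paper's own trace machinery (Proposition \ref{pro:trace correspondence}, Corollary \ref{cor:folklore trace}, and the identity $\hat{T}_{\tau}(\mathcal{X})=Tr_{\tau}^{\mathcal{X}}(1_{L_A(\mathcal{X})})$ stated just before the proposition). The route through uniqueness of the trace plus a rank-one normalization is a reasonable one; it does use the uniqueness hypothesis in an essential way, whereas the more direct route --- first checking $Tr_{\tau}^{H_A}=\tau\otimes\mathrm{Tr}$ on $K_A(H_A)$ by the rank-one computation and then restricting to $K_A(\overline{hH_A})$ exactly as in the proof of Corollary \ref{cor:folklore trace} --- gives $Tr_{\tau}^{\overline{hH_A}}=(\tau\otimes\mathrm{Tr})|_{\overline{h(A\otimes\mathbb{K})h}}$ for an arbitrary densely defined lower semicontinuous trace and bypasses the proportionality-constant issue entirely.

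Three points should be tightened, though none is a fatal gap. First, the step $\hat{T}_{\tau}(\mathcal{X})=\lim_n Tr_{\tau}^{\mathcal{X}}(e_n)$ is better justified directly from the definition: $\langle\xi_i,e_n\xi_i\rangle_A\nearrow\langle\xi_i,\xi_i\rangle_A$ in norm for each basis element $\xi_i$, so lower semicontinuity and monotonicity of $\tau$ let you interchange the limit with the sum; invoking ``strict'' lower semicontinuity of $Tr_{\tau}^{\mathcal{X}}$ on $L_A(\mathcal{X})$ is heavier than needed. Second, the appeal to ``normality'' of $\tau\otimes\mathrm{Tr}$ and strong convergence to the support projection is von Neumann language that does not literally apply here; the correct statement is that for any two increasing sequences $g_n(h)$, $g'_n(h)$ with $g_n,g'_n$ continuous, vanishing at $0$, and increasing pointwise to $1_{(0,\infty)}$ on $\sigma(h)$, one has $\sup_n\tau\otimes\mathrm{Tr}(g_n(h))=\sup_n\tau\otimes\mathrm{Tr}(g'_n(h))$, proved by interleaving: $\tau\otimes\mathrm{Tr}(g_n(h))=\lim_m\tau\otimes\mathrm{Tr}(g_n(h)^{1/2}g'_m(h)g_n(h)^{1/2})\leq\sup_m\tau\otimes\mathrm{Tr}(g'_m(h))$ and symmetrically. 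Third, the witness $\xi$ should be exhibited: take any nonzero $\zeta\in hH_A$ and $a\in\mathrm{Ped}(A)$ with $\zeta a\neq 0$; then $\langle\zeta a,\zeta a\rangle_A\leq\|\zeta\|^2a^*a$ has finite nonzero trace by faithfulness of $\tau$ on the simple algebra $A$.
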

The following proposition is an immediate corollary of \cite[Proposition 3.3]{Na}
\begin{pro}\label{proposition3.3:Na}
Let $A$ be a simple $\sigma$-unital C$^*$-algebra with a unique tracial state $\tau$ and no unbounded trace. 
Then for every right Hilbert $A$-module $\mathcal{X}$ and every $A$-$A$-equivalence bimodule $\mathcal{E}$,
\[\hat{T}_{\tau}([\mathcal{X}\otimes\mathcal{E}])=\hat{T}_{\tau}([\mathcal{X}])
\hat{T}_{\tau}([\mathcal{E}]).\]
\end{pro}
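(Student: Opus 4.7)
The plan is to reduce the product formula to a direct calculation with countable bases, and then use the trace correspondence from Proposition \ref{pro:trace correspondence} together with the uniqueness hypothesis on $\tau$ to identify the trace built from $\mathcal{E}$ with a scalar multiple of $\tau$.

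First I would fix countable bases $\{\xi_i\}_{i \in \mathbb{N}}$ of $\mathcal{X}$ and $\{\eta_j\}_{j \in \mathbb{N}}$ of $\mathcal{E}$ as right Hilbert $A$-modules; these exist because $K_A(\mathcal{X})$ and $K_A(\mathcal{E}) \cong A$ are $\sigma$-unital. Writing $\varphi : A \to K_A(\mathcal{E})$ for the left action, the interior tensor product $\mathcal{X} \otimes_A \mathcal{E}$ carries the inner product $\langle \xi \otimes \eta, \xi' \otimes \eta'\rangle_A = \langle \eta, \varphi(\langle \xi, \xi'\rangle_A)\eta'\rangle_A$. A short computation using both bases shows $\xi \otimes \eta = \sum_{i,j}(\xi_i \otimes \eta_j)\langle \xi_i \otimes \eta_j, \xi \otimes \eta\rangle_A$, hence $\{\xi_i \otimes \eta_j\}_{i,j}$ is a countable basis of $\mathcal{X} \otimes_A \mathcal{E}$ as a right Hilbert $A$-module.

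Next, I would pin down $Tr_{\tau}^{\mathcal{E}}$. Since $\varphi$ is an isomorphism of $A$ onto $K_A(\mathcal{E})$, the functional $Tr_{\tau}^{\mathcal{E}} \circ \varphi$ is a densely defined lower semicontinuous trace on $A$. The hypothesis that $\tau$ is the unique tracial state and that $A$ has no unbounded densely defined lower semicontinuous trace forces $Tr_{\tau}^{\mathcal{E}} \circ \varphi = c\tau$ for some scalar $c \geq 0$, and taking norms on both sides yields $c = \|Tr_{\tau}^{\mathcal{E}}\| = \hat{T}_{\tau}([\mathcal{E}])$.

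Finally, I would combine the two ingredients. Using Proposition~2.4 to unpack $Tr_{\tau}^{\mathcal{E}}(\varphi(\langle \xi_i,\xi_i\rangle_A))$ along the basis $\{\eta_j\}$, Tonelli to swap the order of summation (all terms are non-negative), and the previous step, I get
\[\hat{T}_{\tau}([\mathcal{X}\otimes\mathcal{E}]) = \sum_{i,j}\tau(\langle \xi_i \otimes \eta_j, \xi_i \otimes \eta_j\rangle_A) = \sum_i Tr_{\tau}^{\mathcal{E}}(\varphi(\langle \xi_i, \xi_i\rangle_A)) = c\sum_i \tau(\langle \xi_i, \xi_i\rangle_A) = \hat{T}_{\tau}([\mathcal{E}])\hat{T}_{\tau}([\mathcal{X}]),\]
covering both $\hat{T}_{\tau}([\mathcal{X}]) < \infty$ and $\hat{T}_{\tau}([\mathcal{X}]) = \infty$ simultaneously. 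The main obstacle I expect is the bookkeeping needed to verify the countable basis claim for $\mathcal{X} \otimes_A \mathcal{E}$ and to justify the rearrangement of the double sum; once that is in hand, the uniqueness of $\tau$ does the rest and the formula falls out mechanically.
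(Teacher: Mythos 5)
Your proof is correct. Note that the paper gives no argument for this proposition at all---it is quoted as an immediate corollary of \cite[Proposition 3.3]{Na}---so there is nothing internal to compare against; your derivation (checking that $\{\xi_i\otimes\eta_j\}_{i,j}$ is a countable basis of $\mathcal{X}\otimes_A\mathcal{E}$, using the unique-tracial-state and no-unbounded-trace hypotheses to identify $Tr_{\tau}^{\mathcal{E}}\circ\varphi$ with $\hat{T}_{\tau}([\mathcal{E}])\,\tau$, and rearranging the non-negative double sum) is exactly the computation one expects behind that citation, and it uses only machinery the paper has already set up, namely the definition of $Tr_{\tau}^{\mathcal{E}}$ and the identity $\hat{T}_{\tau}([\mathcal{E}])=\| Tr_{\tau}^{\mathcal{E}}\|$.
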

If $A$ is $\sigma$-unital, then for any $A$-$A$-equivalence bimodule 
$\mathcal{E}$ there exists a positive element $h$ in $A\otimes\mathbb{K}$ 
such that $\mathcal{E}$ is isomorphic to $\overline{hH_A}$ as a right Hilbert 
$A$-module. Note that $\overline{h(A\otimes\mathbb{K})h}$ is 
isomorphic to $A$ and $\overline{hH_A}$ has a suitable structure as an $A$-$A$-equivalence 
bimodule in this case. (See, for example, \cite[Proposition 2.3]{Na}.) 
The following proposition is a key proposition in this paper. 
\begin{pro}\label{pro:key fundamental}
Let $A$ be a simple $\sigma$-unital C$^*$-algebra with a unique tracial state $\tau$ and no unbounded trace. 
Define a map $T$ of $\mathrm{Pic}(A)$ to $\mathbb{R}_{+}^\times$ by 
$T([\overline{hH_A}])=d_{\tau} (h)$. 
Then $T$ is a well-defined multiplicative map and $T([\mathcal{E}_{\alpha}^A])=1$ for any $\alpha\in\mathrm{Aut}(A)$. Moreover 
$\mathrm{Im}(T)$ is equal to the set 
$$
\{d_\tau (h) \in \mathbb{R}^{\times}_{+}\ | \ 
h \text{ is a positive element in }  A\otimes\mathbb{K} \text{ such that } \\
A \cong \overline{h(A\otimes\mathbb{K})h}
\}.
$$
\end{pro}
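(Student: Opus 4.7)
The plan is to recognize $T$ as the restriction of the right-Hilbert-module invariant $\hat{T}_\tau$ to the subclass of $A$-$A$-equivalence bimodules. By the discussion just before the proposition, every class in $\mathrm{Pic}(A)$ is represented by some $\overline{hH_A}$ with $\overline{h(A\otimes\mathbb{K})h}\cong A$, and Proposition \ref{propositon3.1:Na} identifies $\hat{T}_\tau([\overline{hH_A}])$ with $d_\tau(h)$. Since an equivalence bimodule isomorphism refines to a right Hilbert module isomorphism, the well-definedness of $T$ as a map $\mathrm{Pic}(A)\to[0,\infty]$ is inherited from that of $\hat{T}_\tau$.

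The first substantive point is that $T$ actually takes values in $\mathbb{R}_+^\times$. Non-vanishing $d_\tau(h)>0$ follows from $h\neq 0$ and the faithfulness of $\tau\otimes\mathrm{Tr}$ on positive elements (e.g., after rescaling so that $\|h\|\leq 1$ one has $d_\tau(h)\geq \tau\otimes\mathrm{Tr}(h)>0$). Finiteness is where the hypothesis that $\tau$ is the unique tracial state and that $A$ carries no unbounded trace genuinely enters: $Tr_\tau^{\overline{hH_A}}$ is a densely defined lower semicontinuous trace on $K_A(\overline{hH_A})\cong \overline{h(A\otimes\mathbb{K})h}\cong A$, hence by uniqueness must be a bounded positive multiple of $\tau$, so $d_\tau(h)=\hat{T}_\tau([\overline{hH_A}])=\|Tr_\tau^{\overline{hH_A}}\|<\infty$. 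This is the step I expect to be the main obstacle; the rest is essentially bookkeeping from earlier propositions.

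Multiplicativity is then immediate from Proposition \ref{proposition3.3:Na}. To compute $T([\mathcal{E}_\alpha^A])$, I would use that $\mathcal{E}_\alpha^A\cong \mathcal{X}_A$ as a right Hilbert $A$-module (as noted in the discussion preceding Proposition \ref{pro:trivial right module}) and evaluate $\hat{T}_\tau(\mathcal{X}_A)$ with a countable basis $\{\xi_i\}\subset A$: the partial sums $\sum_{i=1}^n \xi_i\xi_i^*$ form an approximate unit of $A$, and the trace property gives
\[
\hat{T}_\tau([\mathcal{X}_A])=\sum_i\tau(\xi_i^*\xi_i)=\sum_i\tau(\xi_i\xi_i^*)=\lim_n\tau\Bigl(\sum_{i=1}^n\xi_i\xi_i^*\Bigr)=\|\tau\|=1.
\]
Finally, the image description is essentially tautological: the containment $\mathrm{Im}(T)\subseteq\{d_\tau(h)\mid \overline{h(A\otimes\mathbb{K})h}\cong A\}$ is built into the choice of representative, and conversely any such $h$ endows $\overline{hH_A}$ with the structure of an $A$-$A$-equivalence bimodule (since $K_A(\overline{hH_A})\cong A$ by hypothesis), whose class in $\mathrm{Pic}(A)$ is sent by $T$ to $d_\tau(h)$.
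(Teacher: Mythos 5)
Your proposal is correct and follows essentially the same route as the paper: finiteness of $d_\tau(h)=\|Tr_\tau^{\overline{hH_A}}\|$ from the absence of unbounded traces on $K_A(\overline{hH_A})\cong A$ via Proposition \ref{propositon3.1:Na}, multiplicativity from Proposition \ref{proposition3.3:Na}, and the value $1$ on $[\mathcal{E}_\alpha^A]$ by identifying it with the trivial right module. Your explicit basis computation of $\hat{T}_\tau([\mathcal{X}_A])=\|\tau\|=1$ is just a spelled-out version of the paper's observation that $d_\tau(s)=1$ for a strictly positive element $s$.
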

\begin{proof}
Let $[\overline{hH_A}]\in \mathrm{Pic}(A)$. 
Then $d_{\tau}(h) =\hat{T}_{\tau}(\overline{hH_A})= \| Tr_{\tau}^{\overline{hH_A}}\| $ by Proposition \ref{propositon3.1:Na}. 
Since $K_A(\overline{hH_A})\cong A$ has no unbounded trace, $d_{\tau}(h)=\| Tr_{\tau}^{\overline{hH_A}}\| <\infty$. 
Hence we see that $T$ is well-defined map and $\mathrm{Im}(T)$ is equal to the set 
$$
\{d_\tau (h) \in \mathbb{R}^{\times}_{+}\ | \ 
h \text{ is a positive element in }  A\otimes\mathbb{K} \text{ such that } \\
A \cong \overline{h(A\otimes\mathbb{K})h}
\}
$$
by an argument above. 
Proposition \ref{proposition3.3:Na} implies that $T$ is a multiplicative map. 
It is easy to see that $\mathcal{E}_{\alpha}^A$ is isomorphic to $\overline{sA}=A$ as a right Hilbert $A$-module where $s$ is a 
strictly positive element in $A$. 
Since $\tau$ is a tracial state, $T([\mathcal{E}_{\alpha}^A])=d_{\tau} (s)=1$. 
\end{proof}
Put $\mathcal{F}(A)=\mathrm{Im}(T)$. 
We call $\mathcal{F}(A)$ the \textit{fundamental group} of $A$, which is a 
multiplicative subgroup of $\mathbb{R}^{\times}_{+}$ by the proposition above. 

Let $A$ a simple C$^*$-algebra with unique (up to scalar multiple) 
densely defined lower semicontinuous trace $\tau$. 
For any $\alpha\in\mathrm{Aut}(A\otimes\mathbb{K})$, $\tau\otimes\mathrm{Tr} \circ \alpha$ is a densely defined lower semicontinuous 
trace on $A\otimes\mathbb{K}$. 
Hence there exists a positive number $\lambda$ such that $\tau\otimes\mathrm{Tr} \circ \alpha =\lambda\tau\otimes\mathrm{Tr}$. 
Define a map $S$ of $\mathrm{Out}(A\otimes\mathbb{K})$ to $\mathbb{R}_{+}^\times$ by $S([\alpha ])=\lambda$ where 
$\tau\otimes\mathrm{Tr} \circ \alpha =\lambda\tau\otimes\mathrm{Tr}$. 
Then $S$ is a well-defined multiplicative map and $\mathrm{Im}(S)$ is equal to the set 
$$
\mathfrak{S}(A)
:= \{ \lambda \in \mathbb{R}^{\times}_+ \ | \ 
\tau\otimes\mathrm{Tr} \circ \alpha = \lambda \tau\otimes\mathrm{Tr} \text{ for some  } 
 \alpha \in \mathrm{Aut} (A\otimes \mathbb{K}) \ \}. 
$$
The following proposition is a strengthened version of \cite[Proposition 4.20]{Na}. 
\begin{pro}\label{pro:key spliting}
Let $A$ be a simple $\sigma$-unital C$^*$-algebra with a unique tracial state $\tau$ and no unbounded trace. 
Then there exists an isomorphism $\Psi$ of 
$\mathrm{Out}(A\otimes\mathbb{K})$ to $\mathrm{Pic}(A)$ such that $S([\alpha])^{-1} =T\circ \Psi ([\alpha])$ for any 
$[\alpha ]\in\mathrm{Out}(A\otimes\mathbb{K})$, and hence $\mathcal{F}(A)=\mathfrak{S}(A)$. 
\end{pro}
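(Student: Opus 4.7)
The plan is to define $\Psi$ as the composition of the Brown--Green--Rieffel isomorphism $\rho_{A\otimes\mathbb{K}}\colon\mathrm{Out}(A\otimes\mathbb{K})\to\mathrm{Pic}(A\otimes\mathbb{K})$ with the Morita-induced isomorphism $\Phi\colon\mathrm{Pic}(A\otimes\mathbb{K})\to\mathrm{Pic}(A)$ given by $\Phi([\mathcal{E}])=[H_A^*\otimes_{A\otimes\mathbb{K}}\mathcal{E}\otimes_{A\otimes\mathbb{K}}H_A]$, using that $H_A$ is an $(A\otimes\mathbb{K})$-$A$-equivalence bimodule and that $A$ is $\sigma$-unital. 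That $\Psi:=\Phi\circ\rho_{A\otimes\mathbb{K}}$ is an isomorphism is then immediate from the existing results. All of the content sits in the trace-scaling identity $T\circ\Psi=S^{-1}$, and once this is established the claim $\mathcal{F}(A)=\mathrm{Im}(T)=\mathrm{Im}(S^{-1})=\mathrm{Im}(S)=\mathfrak{S}(A)$ follows because $\mathrm{Im}(S)$ is a subgroup of $\mathbb{R}^\times_+$ and hence closed under inversion.

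Fix $[\alpha]\in\mathrm{Out}(A\otimes\mathbb{K})$ with $\lambda:=S([\alpha])$, so $(\tau\otimes\mathrm{Tr})\circ\alpha=\lambda(\tau\otimes\mathrm{Tr})$. I would compute $T(\Psi([\alpha]))$ by tracking how $\tau$ propagates through the three tensor factors of $\Psi([\alpha])=H_A^*\otimes\mathcal{E}_\alpha^{A\otimes\mathbb{K}}\otimes H_A$ under the trace transfer of Proposition \ref{pro:trace correspondence}. Transferring $\tau$ along $H_A$ gives $\tau\otimes\mathrm{Tr}$ on $A\otimes\mathbb{K}$. Transferring along the $(A\otimes\mathbb{K})$-$(A\otimes\mathbb{K})$-equivalence bimodule $\mathcal{E}_\alpha^{A\otimes\mathbb{K}}$ gives $(\tau\otimes\mathrm{Tr})\circ\alpha^{-1}=\lambda^{-1}(\tau\otimes\mathrm{Tr})$; this step is a direct calculation using $_B\langle\xi,\xi\rangle=\xi\xi^*$ and $\langle\xi,\xi\rangle_B=\alpha^{-1}(\xi^*\xi)$ on $\mathcal{E}_\alpha^B$ together with the trace property of $\sigma\circ\alpha^{-1}$. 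Finally, transferring along $H_A^*$ inverts the first transfer and returns $\lambda^{-1}\tau$ on $A$. By Proposition \ref{pro:key fundamental} we have $T(\Psi([\alpha]))=\hat{T}_\tau([\Psi([\alpha])])=\|Tr_\tau^{\Psi([\alpha])}\|$, and this norm equals $\lambda^{-1}$ because every densely defined trace on $A\cong K_A(\Psi([\alpha]))$ is a scalar multiple of $\tau$ by hypothesis and no unbounded trace exists. Hence $T(\Psi([\alpha]))=\lambda^{-1}=S([\alpha])^{-1}$.

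An equivalent but more concrete variant is to identify $\Psi([\alpha])$ with $\overline{hH_A}$ for $h=\alpha^{-1}(s\otimes e_{11})$, where $s\in A$ is strictly positive and $e_{11}\in\mathbb{K}$ is a rank-one projection: one first checks $\mathcal{E}_\alpha^{A\otimes\mathbb{K}}\otimes_{A\otimes\mathbb{K}}H_A\cong H_A$ as right Hilbert $A$-modules via $b\otimes\xi\mapsto\alpha^{-1}(b)\xi$, and then tensors with $H_A^*$ on the left to land on the cut-down by $\alpha^{-1}(s\otimes e_{11})$; since $\overline{h(A\otimes\mathbb{K})h}=\alpha^{-1}(\overline{sAs}\otimes e_{11}\mathbb{K}e_{11})\cong A$, the class $[\overline{hH_A}]$ lies in $\mathrm{Pic}(A)$ and $d_\tau(h)=\lim_n(\tau\otimes\mathrm{Tr})(\alpha^{-1}((s\otimes e_{11})^{1/n}))=\lambda^{-1}$ directly. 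The main obstacle in either path is keeping track of conventions: the trace transferred through $\mathcal{E}_\alpha^B$ picks up the factor $\alpha^{-1}$, not $\alpha$, and a sign error would produce $\lambda$ instead of $\lambda^{-1}$. The trace-theoretic path additionally uses the naturality of the trace transfer with respect to tensor products of equivalence bimodules, which requires a short verification using bases of the form $\{\xi_i\otimes\eta_j\}$ and the computation pattern already used in the proof of Proposition \ref{pro:trace correspondence}.
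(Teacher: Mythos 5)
Your proposal is correct, and your ``more concrete variant'' is precisely the paper's own argument: the paper defines $\Psi([\alpha])=[(H_A)^*\otimes\mathcal{E}_{\alpha}^{A\otimes\mathbb{K}}\otimes H_A]$, identifies this right Hilbert $A$-module with $\overline{\alpha^{-1}(s\otimes e_{11})H_A}$, and computes $d_{\tau}(\alpha^{-1}(s\otimes e_{11}))=S([\alpha])^{-1}$ exactly as you do. Your first (trace-transfer) route is a sound reformulation of the same computation, with the extra naturality verification you correctly flag.
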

\begin{proof}
Define $\Psi ([\alpha]) :=[(H_A)^*\otimes\mathcal{E}_{\alpha}^{A\otimes\mathbb{K}}\otimes H_A]$. 
Since $H_A$ is an $A\otimes\mathbb{K}$-$A$-equivalence bimodule and $\rho_{A\otimes\mathbb{K}}$ induces an isomorphism of 
$\mathrm{Out}(A\otimes\mathbb{K})$ to $\mathrm{Pic}(A\otimes\mathbb{K})$ by \cite[Corollary 3.5]{BGR}, 
$\Psi$ is an isomorphism of $\mathrm{Out}(A\otimes\mathbb{K})$ to $\mathrm{Pic}(A)$. 
Note that $(H_A)^*$ is naturally isomorphic to $\overline{(s\otimes e_{11})(A\otimes\mathbb{K})}$ as an $A$-$A\otimes\mathbb{K}$-equivalence bimodule 
where $s$ is a strictly positive element in $A$ and $e_{11}$ is a rank one projection in $\mathbb{K}$. 
It is easy to see that for any element $\zeta$ in an algebraic tensor product 
$(s\otimes e_{11})(A\otimes\mathbb{K})\odot \mathcal{E}_{\alpha}^{A\otimes\mathbb{K}} \odot H_A$, 
there exists an element $\xi$ in $H_A$ such that 
$$
\zeta =(s^{\frac{1}{2}}\otimes e_{11})\otimes (s^{\frac{1}{4}}\otimes e_{11})\otimes \alpha^{-1} (s^{\frac{1}{4}}\otimes e_{11})\xi.
$$
Therefore it can easily be checked that $(H_A)^*\otimes \mathcal{E}_{\alpha}^{A\otimes\mathbb{K}}\otimes H_A$ is isomorphic to 
$\overline{\alpha^{-1}(s\otimes e_{11})H_A}$ as a right Hilbert $A$-module. 
We have 
$$
d_{\tau} (\alpha^{-1}(s\otimes e_{11})) =
\lim_{n \to \infty}\tau\otimes\mathrm{Tr} (\alpha^{-1} ((s\otimes e_{11})^{\frac{1}{n}})) =S([\alpha ])^{-1}
$$
since $\tau$ is a tracial state on $A$. 
Hence we obtain the conclusion. 
\end{proof}

\section{The Cuntz semigroup}
In this section we shall review basic facts of the Cuntz semigroup and some results in \cite{CEI}, 
\cite{ERS}, \cite{Rob2} and \cite{Ror}. See, for example, \cite{APT} for details of the Cuntz semigroup. 
Let $A$ be a C$^*$-algebra. For positive elements $a,b\in A$ 
we say that $a$ is \textit{Cuntz smaller than} $b$, written $a\precsim b$, if there exists a 
sequence $\{x_n\}_{n\in\mathbb{N}}$ of $A$ such that $\| x_n^*bx_n-a\|\rightarrow 0$. 
Positive elements $a$ and $b$ are said to be \textit{Cuntz equivalent}, written $a \sim b$, if 
$a\precsim b$ and $b\precsim a$. Define the \textit{Cuntz semigroup} $\mathrm{Cu}(A)$ as the set of 
Cuntz equivalence classes of positive elements in $A\otimes\mathbb{K}$ endowed with the order 
$[a]\leq [b]$ if $a$ is Cuntz smaller than $b$, and the addition $[a]+[b]=
[a^{\prime}+b^{\prime}]$ where $a\sim a^{\prime}$, $b\sim b^{\prime}$ and 
$a^{\prime}b^{\prime}=0$. 
Note that this definition is different from the original definition $\mathrm{W}(A)$ in 
\cite{Cu}. (We have $\mathrm{Cu}(A)=\mathrm{W}(A\otimes\mathbb{K})$.) 
The Cuntz semigroup $\mathrm{Cu}(A)$ is also defined using right Hilbert 
$A$-modules (see \cite{CEI}). 
For positive elements $a,b\in A\otimes\mathbb{K}$ we say that 
$a$ is \textit{compactly contained} in $b$, written $a\ll b$ if whenever 
$[b]\leq \sup_{n\in\mathbb{N}} [b_n]$ for an increasing sequence $\{[b_n]\}_{n\in\mathbb{N}}$, then 
there exists a natural number $n$ such that $[a]\leq [b_n]$. 
Coward, Elliott and Ivanescu \cite{CEI} showed that $\mathrm{Cu}(A)$ has the following 
properties:\ \\
\quad (1) every increasing sequence in $\mathrm{Cu}(A)$ has a supremum, \ \\
\quad (2) for any element $[a]$ in $\mathrm{Cu}(A)$ there exists an increasing sequence 
$\{[a_n]\}_{n\in\mathbb{N}}$ of $\mathrm{Cu}(A)$ such that $[a_{n}]\ll [a_{n+1}]$ for 
any $n\in\mathbb{N}$ and $[a]=\sup [a_n]$, \ \\
\quad (3) the operation of passing to the supremum of an increasing sequence and the relation 
$\ll$ are compatible with addition. 

Moreover they showed that $\mathrm{Cu}(A)$ is a functor which is continuous 
with respect to inductive limits (\cite[Theorem 2]{CEI}). 
For a positive element $a\in A\otimes\mathbb{K}$ and $\epsilon >0$ 
we denote by $(a-\epsilon)_{+}$ the element $f(a)$ in $A\otimes\mathbb{K}$ 
where $f(t)=\max\{0,t-\epsilon \}, t\in \sigma (a)$. 
Then we have $(a-\epsilon )_{+}\ll a$. 

Following the definition in \cite{Ror}, the Cuntz semigroup $\mathrm{Cu}(A)$ is said to be 
\textit{almost unperforated} if $(k+1)[a] \leq k[b]$ for some $k\in\mathbb{N}$ implies that  
$[a]\leq [b]$. 
R\o rdam showed that if $A$ is $\mathcal{Z}$-stable, then 
$\mathrm{Cu}(A)$ is almost unperforated (see \cite[Theorem 4.5]{Ror}). 
If $A$ is a simple exact C$^*$-algebra with traces, then $\mathrm{Cu}(A)$ is almost unperforated 
if and only if $A$ has strict comparison, that is, 
if $a,b\in (A\otimes \mathbb{K})_{+}$ with $d_{\tau}(a)< d_{\tau} (b)< \infty$ for any 
$\tau \in T(A)$, then $[a]\leq [b]$. 
(See \cite[Proposition 4.2, Remark 4.3 and Proposition 6.2]{ERS} and \cite[Proposition 3.2 and Corollary 4.6]{Ror}.) 

\begin{lem}\label{lem:Brownsup}
Let $A$ be a simple C$^*$-algebra and $a$ a non-zero positive element in $A\otimes\mathbb{K}$. 
Then for any positive element $b$ in $A\otimes\mathbb{K}$, $[b]\leq \sup_{n\in\mathbb{N}}n[a]$. 
\end{lem}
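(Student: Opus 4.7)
The plan is to show $[(b-\epsilon)_+] \leq n[a]$ for some $n = n(\epsilon)$, and then pass to the limit $\epsilon \to 0$ using the identity $[b] = \sup_{\epsilon>0}[(b-\epsilon)_+]$, which is built into property (2) of $\mathrm{Cu}(A)$ (since $(b-\epsilon)_+ \ll b$).

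First I would use simplicity to reduce to an approximation statement. Since $A$ is simple, so is $A\otimes \mathbb{K}$, and hence the nonzero positive element $a$ is full: the closed two-sided ideal it generates equals $A\otimes\mathbb{K}$, so $b$ lies in this ideal. Then I invoke the standard Kirchberg--R\o rdam-type approximation lemma, which says that for any positive element $b$ in the ideal generated by $a$ and any $\epsilon>0$, there exist $x_1,\dots,x_n \in A\otimes\mathbb{K}$ with
\[
(b-\epsilon)_+ = \sum_{i=1}^n x_i^* a x_i.
\]
This is the main non-trivial input and is the step where all the substance sits; it is well known (e.g.\ in the references on the Cuntz semigroup already cited in the section).

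Next I would bound the Cuntz class of $\sum_{i=1}^n x_i^* a x_i$ by $n[a]$. Setting $y=(x_1,\dots,x_n)^T$ and $D=\mathrm{diag}(a,a,\dots,a)\in M_n(A\otimes\mathbb{K})$, one has
\[
\sum_{i=1}^n x_i^* a x_i = y^* D y \sim (D^{1/2} y)(D^{1/2} y)^* = D^{1/2} y y^* D^{1/2} \leq \|y\|^2 D,
\]
so $\bigl[\sum_i x_i^* a x_i\bigr]\leq [D]=n[a]$ in $\mathrm{Cu}(A)$. Combining this with the previous step gives $[(b-\epsilon)_+]\leq n[a]\leq \sup_{m\in\mathbb{N}} m[a]$, where the supremum exists by property (1) of $\mathrm{Cu}(A)$.

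Finally, letting $\epsilon\to 0$, the increasing sequence $\{[(b-1/k)_+]\}_{k\in\mathbb{N}}$ has supremum $[b]$ (by property (2)), and each term is bounded by the fixed element $\sup_m m[a]$, hence so is $[b]$. The main potential obstacle is the appeal to the Kirchberg--R\o rdam approximation lemma identifying positive elements in $\overline{(A\otimes\mathbb{K})a(A\otimes\mathbb{K})}$ with sums $\sum x_i^* a x_i$ up to $(b-\epsilon)_+$; everything else is a routine manipulation in the Cuntz semigroup.
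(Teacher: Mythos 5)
Your proof is correct, but it follows a genuinely different route from the paper's. You work entirely in the element picture of $\mathrm{Cu}(A)$: simplicity makes $a$ full, the standard ideal-approximation lemma writes $(b-\epsilon)_+$ as $\sum_{i=1}^n x_i^*ax_i$, the column-vector trick bounds this class by $n[a]$, and letting $\epsilon\to 0$ via $[b]=\sup_k[(b-1/k)_+]$ finishes the argument; every step checks out (note that the exact equality $(b-\epsilon)_+=\sum x_i^*ax_i$ is indeed attainable, since from $\|b-\sum x_i^*ax_i\|<\epsilon$ R\o rdam's lemma gives $(b-\epsilon)_+=d^*(\sum x_i^*ax_i)d=\sum(x_id)^*a(x_id)$, though only the Cuntz inequality is needed). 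The paper instead passes to the $\sigma$-unital hereditary subalgebra $B=\overline{(a+b)(A\otimes\mathbb{K})(a+b)}$ and applies the Mingo--Phillips variant of Brown's theorem, which says the countable direct sum $\overline{aH_B}^{\infty}$ is isomorphic to $H_B$ as a right Hilbert module; since $\overline{bH_B}\subseteq H_B$, the inequality $[b]\leq\sup_n n[a]$ is read off directly in the Hilbert-module picture of the Cuntz semigroup. Your approach is more elementary and self-contained, relying only on the classical approximation lemma for elements of the ideal generated by $a$; the paper's is essentially a one-line appeal to a stabilization theorem and is stylistically consistent with the module description of $\mathrm{Cu}$ (Coward--Elliott--Ivanescu) that the paper uses again in Proposition 3.4. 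Both arguments use simplicity only through the fullness of $a$.
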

\begin{proof}
Let $B:=\overline{(a+b)(A\otimes\mathbb{K})(a+b)}$. Then $B$ is a $\sigma$-unital hereditary subalgebra of $A\otimes\mathbb{K}$. 
By a variant of Brown's theorem (see for example \cite[Theorem 1.9]{MP}), $\overline{aH_B}^{\infty}$ is isomorphic to $H_B$ as a right Hilbert module. 
Since $\overline{bH_B}\subseteq H_B$, we see that $[b]\leq \sup_{n\in\mathbb{N}}n[a]$ in $\mathrm{Cu}(B)$. 
We obtain the conclusion because $B$ is a hereditary subalgebra of $A\otimes\mathbb{K}$. 
\end{proof}

The following proposition is an immediate corollary of \cite[Theorem 6.6]{ERS}. 
(Note that they considered the more general case.) 
But we shall give a self-contained proof based on their arguments 
(see also \cite[Proposition 6.4]{ERS}). 
\begin{pro}\label{cor:ERS}
Let $A$ be a simple exact C$^*$-algebra, and let $a$ and 
$b$ be positive elements in $A\otimes\mathbb{K}$. Assume that $\mathrm{Cu}(A)$ is 
almost unperforated and $0$ is an accumulation point of the spectrum $\sigma (a)$ of $a$. 
Then if $d_{\tau}(a)\leq d_{\tau}(b) < \infty$ for any $\tau\in T(A)$, 
then $a$ is Cuntz smaller than $b$. 
\end{pro}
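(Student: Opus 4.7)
The plan is to reduce to showing $(a-\epsilon)_+\precsim b$ for every $\epsilon>0$ and then conclude by taking the supremum in $\mathrm{Cu}(A)$, using the identity $[a]=\sup_{\epsilon>0}[(a-\epsilon)_+]$. So I would fix $\epsilon>0$ and aim to produce $(a-\epsilon)_+\precsim b$ by an application of strict comparison.

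To promote the hypothesized non-strict dimension-function inequality $d_\tau(a)\le d_\tau(b)$ to the strict one required by strict comparison, I would exploit the accumulation-point hypothesis to construct an orthogonal ``spare'' summand. Concretely, since $0$ accumulates $\sigma(a)$, pick $\eta\in\sigma(a)\cap(0,\epsilon)$ and a continuous $g\colon[0,\infty)\to[0,1]$ supported in $(0,\epsilon)$ with $g(\eta)>0$, and set $c:=g(a)$. Then $c$ is a nonzero positive element of $A\otimes\mathbb{K}$, and $c$ is orthogonal to $(a-\epsilon)_+$ because $g$ and $(\cdot-\epsilon)_+$ have disjoint supports. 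The continuous function $h(t):=g(t)+(t-\epsilon)_+$ vanishes at $0$, so $h(a)=c+(a-\epsilon)_+\in\overline{a(A\otimes\mathbb{K})a}$ and hence $h(a)\precsim a$. Orthogonality of $c$ and $(a-\epsilon)_+$ then gives
\[
[(a-\epsilon)_+]+[c]\;=\;[c+(a-\epsilon)_+]\;\le\;[a]\quad\text{in }\mathrm{Cu}(A).
\]

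Applying the dimension function $d_\tau$ to this inequality, for every $\tau\in T(A)$ the simplicity of $A$ forces $d_\tau(c)>0$ since $c\ne 0$, so
\[
d_\tau((a-\epsilon)_+)\;<\;d_\tau((a-\epsilon)_+)+d_\tau(c)\;\le\;d_\tau(a)\;\le\;d_\tau(b)\;<\;\infty.
\]
Then I would invoke the equivalence of almost unperforation with strict comparison for simple exact C$^*$-algebras recalled just before the proposition to conclude $[(a-\epsilon)_+]\le[b]$, and hence $a\precsim b$ upon taking the supremum over $\epsilon>0$.

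The main obstacle — and the only place the accumulation-point hypothesis really enters — is the construction of $c$. If $0$ were isolated in $\sigma(a)$, then for $\epsilon$ smaller than the spectral gap no nonzero element of the form $g(a)$ with the required support condition would exist, and the non-strict inequality $d_\tau(a)\le d_\tau(b)$ could not in general be promoted to the strict inequality that strict comparison demands (already visible when $d_\tau(a)=d_\tau(b)$).
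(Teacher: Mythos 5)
Your argument is correct, and it shares the paper's overall skeleton: fix $\epsilon>0$, use the accumulation-point hypothesis to produce a nonzero $c=c_\epsilon(a)$ supported in $(0,\epsilon)$ and orthogonal to $(a-\epsilon)_+$, record $[c]+[(a-\epsilon)_+]\le[a]$, deduce $[(a-\epsilon)_+]\le[b]$ for every $\epsilon$, and take the supremum. Where you genuinely diverge is in how $[(a-\epsilon)_+]\le[b]$ is deduced. You push the decomposition through the dimension functions and use that a nonzero densely defined lower semicontinuous trace on a simple C$^*$-algebra is faithful, so $d_\tau(c)>0$ and hence $d_\tau((a-\epsilon)_+)<d_\tau(b)<\infty$; one application of strict comparison then finishes. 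The paper instead stays inside the Cuntz semigroup after a single use of strict comparison: it first gets $k[a]\le(k+1)[b]$ for all $k$ (from $kd_\tau(a)\le kd_\tau(b)<(k+1)d_\tau(b)$, which also implicitly uses faithfulness, now of $d_\tau(b)$), then invokes Lemma \ref{lem:Brownsup} to get $2[a]\le\sup_n n[c_\epsilon(a)]$, uses $2[(a-\epsilon)_+]\ll 2[a]$ to extract $m$ with $2[(a-\epsilon)_+]\le m[c_\epsilon(a)]$, combines these into $(m+2)[(a-\epsilon)_+]\le(m+1)[b]$, and concludes by almost unperforation directly. Your route is shorter and bypasses Lemma \ref{lem:Brownsup} entirely, at the cost of leaning twice on the trace picture (faithfulness plus the full strict-comparison formulation); the paper's route, modelled on the arguments of \cite{ERS}, uses the trace hypothesis only once and otherwise argues with the order structure of $\mathrm{Cu}(A)$, which is what makes it adaptable to the more general functionals treated there. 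The only point you should make explicit is the faithfulness of nonzero densely defined lower semicontinuous traces on simple C$^*$-algebras (so that $d_\tau(c)>0$); this is folklore, and the paper's own proof relies on the same fact.
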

\begin{proof}
Let $a$ and $b$ be positive elements in $A\otimes\mathbb{K}$ such that 
$d_{\tau}(a)\leq d_{\tau}(b)$ for any $\tau\in T(A)$. We may assume that 
$\| a\| =\| b\| =1$. 
For any $k\in\mathbb{N}$ we have 
$$
d_{\tau}(\mathrm{diag}(\overbrace{a,..,a}^{k}))=kd_{\tau}(a)\leq kd_{\tau} (b)
< (k+1)d_{\tau}(b)=d_{\tau}(\mathrm{diag}(\overbrace{b,..,b}^{k+1})).
$$ Hence $k[a]\leq (k+1)[b]$ for any $k\in\mathbb{N}$ because $A$ has strict comparison. 
Let $\epsilon >0$, and choose a positive function $c_{\epsilon}$ on $\sigma (a)$ 
such that $c_{\epsilon}(t)>0$ on $t\in (0,\epsilon)$ and $c_{\epsilon}(t)=0$ on 
$\sigma (a)\setminus (0,\epsilon )$. Then we have 
$[c_{\epsilon}(a)]+[(a-\epsilon)_{+}] \leq [a]$. 
Note that for any $\epsilon >0$, $c_{\epsilon}(a)$ is a nonzero positive element 
because $0$ is an accumulation point of $\sigma (a)$. 
Hence we have $2[a]\leq \sup_{n\in\mathbb{N}}n[c_{\epsilon}]$ by Lemma \ref{lem:Brownsup}. 
There exists a natural number $m$ such that $2[(a-\epsilon )_{+}]\leq m[c_{\epsilon}(a)]$ 
since $2[(a-\epsilon)_{+}]\ll 2[a]$. 
Therefore we have 
$$
(m+2)[(a-\epsilon)_{+}]\leq m[(a-\epsilon)_{+}]+m[c_{\epsilon}(a)]\leq m[a]\leq (m+1)[b]. 
$$
By the assumption that $\mathrm{Cu}(A)$ is almost unperforated, we see that 
$[(a-\epsilon)_{+}] \leq [b]$ for any $\epsilon >0$, and hence we have $[a]\leq [b]$. 
\end{proof}
\begin{cor}\label{pro:ERS}
Let $A$ be a simple exact stably projectionless C$^*$-algebra, and let $a$ and 
$b$ be positive elements in $A\otimes\mathbb{K}$. Assume that $\mathrm{Cu}(A)$ is 
almost unperforated. Then if $d_{\tau}(a)=d_{\tau}(b) <\infty$ for any $\tau\in T(A)$, 
then $a$ is Cuntz equivalent to $b$. 
\end{cor}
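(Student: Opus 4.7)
The plan is to apply Proposition \ref{cor:ERS} twice---once with $a$ and $b$ as given, and once with their roles reversed---to obtain $a\precsim b$ and $b\precsim a$, which together yield Cuntz equivalence. The hypothesis of Proposition \ref{cor:ERS} not immediately in hand is that $0$ is an accumulation point of the spectrum of the element playing the role of ``$a$'' in that proposition. This is exactly where the stably projectionless assumption enters.

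Accordingly, the first step is to show that for every non-zero positive element $c\in A\otimes\mathbb{K}$, the point $0$ is an accumulation point of $\sigma(c)$. If instead $0\notin\sigma(c)$, then the hereditary subalgebra $\overline{c(A\otimes\mathbb{K})c}$ is unital and its unit is a non-zero projection of $A\otimes\mathbb{K}$; and if $0$ is an isolated point of $\sigma(c)$, then continuous functional calculus applied to the characteristic function of $\sigma(c)\setminus\{0\}$ produces a non-zero projection. Either outcome contradicts the stably projectionless assumption.

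With this observation in hand, assuming both $a$ and $b$ are non-zero, $0$ is an accumulation point of both $\sigma(a)$ and $\sigma(b)$. The hypothesis $d_\tau(a)=d_\tau(b)<\infty$ for all $\tau\in T(A)$ lets us apply Proposition \ref{cor:ERS}: the inequality $d_\tau(a)\leq d_\tau(b)<\infty$ gives $a\precsim b$, and symmetrically $d_\tau(b)\leq d_\tau(a)<\infty$ gives $b\precsim a$. Hence $a\sim b$. The degenerate case where one of $a,b$ vanishes reduces to the other vanishing too, using that in a simple C$^*$-algebra a non-zero densely defined lower semicontinuous trace has $d_\tau(x)>0$ on every non-zero positive element (the hereditary subalgebra generated by such an $x$ carries a non-zero value of $\tau$, and simplicity then spreads this throughout).

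I do not anticipate a substantive obstacle. The only point worth verifying carefully is the spectral observation in the second paragraph; once that is settled, the corollary is a purely symmetric application of Proposition \ref{cor:ERS}, with the stably projectionless hypothesis playing precisely the role of removing the accumulation-point assumption from its statement.
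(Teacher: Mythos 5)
Your proposal is correct and follows essentially the same route as the paper: the paper's proof simply notes that $0$ is an accumulation point of $\sigma(a)$ for every nonzero positive $a\in A\otimes\mathbb{K}$ because $A$ is stably projectionless, and then invokes Proposition \ref{cor:ERS} (symmetrically in $a$ and $b$). Your spelled-out spectral argument and the treatment of the degenerate zero case are just more detailed versions of what the paper leaves implicit.
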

\begin{proof} 
For any nonzero positive element $a$ in $A\otimes\mathbb{K}$, $0$ is an accumulation point of $\sigma (a)$ 
because $A$ is a stably projectionless C$^*$-algebra. 
Hence we obtain the conclusion by Proposition \ref{cor:ERS}. 
\end{proof}
Based on the result in \cite{Rob2}, we say that a C$^*$-algebra $A$ 
\textit{has almost stable rank one} if for every $\sigma$-unital hereditary subalgebra 
$B\subseteq A\otimes\mathbb{K}$ we have $B\subseteq \overline{\mathrm{GL}(\widetilde{B})}$. 
Robert showed that if $A$ is a simple $\mathcal{Z}$-stable stably projectionless 
C$^*$-algebra, then $A$ has almost stable rank one 
(see \cite[Corollary 4.5]{Rob2} and \cite{Ror}). 
The following proposition is \cite[Proposition 4.7]{Rob2}. 
See \cite[Theorem 3]{CEI} for the proof. 
\begin{pro}\label{pro:CEIR}
Let $A$ be a simple $\sigma$-unital C$^*$-algebra such that $A$ has almost stable rank one and 
$a$ and $b$ positive elements in $A\otimes\mathbb{K}$. 
Then $a$ is Cuntz smaller than $b$ if and only if there exists a right Hilbert $A$-module 
$\mathcal{X}\subseteq \overline{bH_A}$ such that $\mathcal{X}$ is isomorphic 
to $\overline{aH_A}$ as a right Hilbert $A$-module, and 
$a$ is Cuntz equivalent to $b$ if and only if $\overline{aH_A}$ is isomorphic to 
$\overline{bH_A}$ as a right Hilbert $A$-module. 
\end{pro}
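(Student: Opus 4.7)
The strategy is to adapt the polar decomposition argument of Coward--Elliott--Ivanescu to the almost stable rank one setting: each Cuntz subequivalence $a\precsim b$ is converted into an isometric right Hilbert $A$-module embedding $\overline{aH_A}\hookrightarrow\overline{bH_A}$, and for Cuntz equivalence the mutual embeddings so produced are promoted to an isomorphism. The hypothesis of almost stable rank one is exactly what allows the partial isometries appearing in polar decompositions to be approximated within $A\otimes\mathbb{K}$, repairing the failure of the implication $a\sim b\Rightarrow\overline{aH_A}\cong\overline{bH_A}$ for general C$^*$-algebras.

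The \emph{if} direction of the first equivalence is routine and requires no regularity: given $\mathcal{X}\cong\overline{aH_A}$ with $\mathcal{X}\subseteq\overline{bH_A}$, any strictly positive element $a'$ of $K_A(\mathcal{X})$ is Cuntz equivalent to a strictly positive element of $K_A(\overline{aH_A})=\overline{a(A\otimes\mathbb{K})a}$, hence to $a$; and $a'$ lies in $\overline{b(A\otimes\mathbb{K})b}=K_A(\overline{bH_A})$, so $a'\precsim b$ by functional calculus. For the \emph{only if}, assume $a\precsim b$ and for each $\epsilon>0$ choose $x_\epsilon\in A\otimes\mathbb{K}$ with $(a-\epsilon)_+=x_\epsilon^*bx_\epsilon$. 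Set $y_\epsilon:=b^{1/2}x_\epsilon$, so $y_\epsilon^*y_\epsilon=(a-\epsilon)_+$ and $y_\epsilon y_\epsilon^*\in\overline{b(A\otimes\mathbb{K})b}$; the polar decomposition $y_\epsilon=v_\epsilon(a-\epsilon)_+^{1/2}$ in $(A\otimes\mathbb{K})^{**}$ defines an isometric right Hilbert $A$-module map $\Phi_\epsilon:\overline{(a-\epsilon)_+H_A}\to\overline{bH_A}$, $\Phi_\epsilon(\xi)=v_\epsilon\xi$. This is well defined because $v_\epsilon(a-\epsilon)_+^{1/2}f((a-\epsilon)_+)=y_\epsilon f((a-\epsilon)_+)\in A\otimes\mathbb{K}$ for every continuous $f$ vanishing at $0$, and the inner product is preserved by construction. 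Passing to the supremum as $\epsilon\searrow 0$, using the inductive limit continuity of $\mathrm{Cu}$ from \cite[Theorem 2]{CEI}, yields a submodule $\mathcal{X}\subseteq\overline{bH_A}$ isomorphic to $\overline{aH_A}$.

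For the second equivalence, the \emph{if} direction is immediate. The \emph{only if} is the main obstacle: $a\sim b$ supplies mutual embeddings of the associated modules by the first equivalence, and one must promote these to an isomorphism. Here almost stable rank one is essential. Set $B:=\overline{(a+b)(A\otimes\mathbb{K})(a+b)}$, a $\sigma$-unital hereditary subalgebra of $A\otimes\mathbb{K}$. Following Robert, one produces $y\in A\otimes\mathbb{K}$ with $y^*y=a$ and $yy^*$ Cuntz equivalent to $b$ inside $\overline{bBb}$, and exploits the hypothesis $B\subseteq\overline{\mathrm{GL}(\widetilde{B})}$ to approximate the partial isometry $v$ from the polar decomposition of $y$ by invertibles $w_n\in\widetilde{B}$. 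Each $w_n$ implements an honest right Hilbert $A$-module isomorphism between the cut-down submodule $\overline{(a-\epsilon_n)_+H_A}$ and its image in $\overline{bH_A}$; a limiting argument combined with $\overline{aH_A}=\sup_\epsilon\overline{(a-\epsilon)_+H_A}$ then produces the isomorphism $\overline{aH_A}\cong\overline{bH_A}$. The delicate point --- the place where the argument could fail without the hypothesis --- is the uniformity of the approximation $w_n\to v$ needed for the limit to close up.
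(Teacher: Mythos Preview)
The paper does not give its own proof: it records the statement as \cite[Proposition~4.7]{Rob2} and defers to \cite[Theorem~3]{CEI}. Your sketch follows that same strategy, but two steps are not actually carried out.

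In the ``only if'' of the first equivalence, the appeal to ``inductive limit continuity of $\mathrm{Cu}$ from \cite[Theorem~2]{CEI}'' is misplaced: that theorem concerns inductive limits of C$^*$-algebras and says nothing about assembling your embeddings $\Phi_\epsilon\colon\overline{(a-\epsilon)_+H_A}\hookrightarrow\overline{bH_A}$ into a single embedding of $\overline{aH_A}$. The maps $\Phi_\epsilon$ you construct are not nested, and producing a coherent system is precisely where almost stable rank one enters in \cite{CEI} and \cite{Rob2} --- one perturbs each partial isometry by a unitary drawn from $\overline{\mathrm{GL}(\widetilde{B})}$ in the relevant hereditary subalgebra $B$ so that successive embeddings agree on the previous stage. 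You treat the hypothesis as relevant only to the second equivalence, but it is already needed here; an abstract supremum in $\mathrm{Cu}(A)$ does not by itself produce a submodule. For the second equivalence you correctly locate the difficulty (``the uniformity of the approximation $w_n\to v$ needed for the limit to close up'') and then stop; naming the obstacle is not the same as resolving it, and the actual argument in \cite[Theorem~3]{CEI} and its adaptation in \cite{Rob2} is an inductive construction with explicitly controlled errors that your sketch does not supply.
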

Corollary \ref{pro:ERS} and Proposition \ref{pro:CEIR} are important in the proof of 
our main result. These propositions show that every 
separable simple $\mathcal{Z}$-stable stably projectionless  C$^*$-algebra $A$ with a unique 
tracial state has similar properties of II$_1$ factors (Murray-von Neumann comparison theory). 
Moreover we have the following proposition. 
\begin{pro}\label{pro:hereditary}
Let $A$ be a simple exact $\sigma$-unital stably projectionless C$^*$-algebra with a unique tracial state $\tau$ and no unbounded trace. 
Assume that $\mathrm{Cu}(A)$ is almost unperforated, $A$ has almost stable rank one and $\mathcal{F}(A)=\mathbb{R}_{+}^\times$. 
Then every nonzero hereditary subalgebra of $A$ is isomorphic to $A$. 
\end{pro}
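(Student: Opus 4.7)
The plan is to reduce to showing that for every nonzero positive $h\in A$, the hereditary subalgebra $\overline{hAh}$ is isomorphic to $A$; taking a strictly positive element of any nonzero $\sigma$-unital hereditary subalgebra then gives the result in that case. The idea is that $\mathcal{F}(A)=\mathbb{R}_+^\times$ produces a rich supply of corner-isomorphic reference elements, and the Cuntz-semigroup machinery developed in this section will transfer an equality of dimension functions to Cuntz equivalence, then to Hilbert-module isomorphism, and finally to isomorphism of hereditary subalgebras.

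First I would verify that $d_\tau(h)\in(0,\infty)$. Since $\tau$ is a tracial state (in particular bounded, with $\|\tau\|=1$) and $h\in A_+$, one has $d_\tau(h)\le 1$; positivity follows from the faithfulness of $\tau$, which is forced by simplicity of $A$. Using the description of $\mathcal{F}(A)$ provided in Proposition \ref{pro:key fundamental}, the assumption $\mathcal{F}(A)=\mathbb{R}_+^\times$ furnishes a positive element $k\in A\otimes\mathbb{K}$ with $\overline{k(A\otimes\mathbb{K})k}\cong A$ and $d_\tau(k)=d_\tau(h)$. Because $T(A)=\mathbb{R}_+\tau$ (unique tracial state and no unbounded trace), the equality $d_\sigma(h)=d_\sigma(k)<\infty$ then persists for every $\sigma\in T(A)$ by scaling. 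All hypotheses of Corollary \ref{pro:ERS} are in force ($A$ is simple, exact, stably projectionless, and $\mathrm{Cu}(A)$ is almost unperforated), so it yields $h\sim k$ in the Cuntz semigroup.

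With almost stable rank one at hand, Proposition \ref{pro:CEIR} upgrades this Cuntz equivalence to an isomorphism $\overline{hH_A}\cong \overline{kH_A}$ of right Hilbert $A$-modules. Passing to compact operators and identifying $A\cong A\otimes e_{11}\subset A\otimes\mathbb{K}$, this becomes
\[
\overline{hAh}\;\cong\; K_A(\overline{hH_A})\;\cong\; K_A(\overline{kH_A})\;\cong\; \overline{k(A\otimes\mathbb{K})k}\;\cong\; A,
\]
completing the argument.

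The main obstacle is the finiteness of $d_\tau(h)$: without it, one cannot invoke Corollary \ref{pro:ERS}, and the entire strategy of pairing $h$ with a reference element $k$ of the same trace value breaks down. Ensuring this is precisely where the hypotheses ``$\tau$ is a tracial state'' and ``no unbounded trace'' are used in an essential way; everything else is bookkeeping on top of the earlier propositions.
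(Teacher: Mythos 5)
Your proof is correct and follows essentially the same route as the paper: use $\mathcal{F}(A)=\mathbb{R}_{+}^\times$ to find a reference element $k$ with $\overline{k(A\otimes\mathbb{K})k}\cong A$ and $d_\tau(k)=d_\tau(h)$, then apply Corollary \ref{pro:ERS} and Proposition \ref{pro:CEIR} to get $\overline{hH_A}\cong\overline{kH_A}$ and pass to compact operators. Your added remarks on the finiteness and positivity of $d_\tau(h)$ and on the $\sigma$-unitality of the hereditary subalgebra are harmless refinements of the same argument.
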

\begin{proof}
Let $B$ be a non-zero hereditary subalgebra of $A$.  Then $B$ is isomorphic to $\overline{h_0Ah_0}$ for some non-zero positive element $h_0$ in $A$. 
Since $d_{\tau}(h_0)\in \mathbb{R}_{+}^\times =\mathcal{F}(A)$, there exists a positive element $h$ in $A\otimes\mathbb{K}$ such that $d_{\tau}(h)=d_{\tau}(h_0)$ and 
$\overline{h(A\otimes\mathbb{K})h}$ is isomorphic to $A$. But then $h\sim h_0$ by Corollary \ref{pro:ERS} and so 
$\overline{hH_A}$ is isomorphic to $\overline{(h_0\otimes e_{11})H_A}$ by Proposition \ref{pro:CEIR}. 
Hence 
$
A\cong K_A(\overline{hH_A}) \cong K_A(\overline{(h_0\otimes e_{11})H_A}) \cong B
$.
\end{proof}

\section{Main result}
The following theorem is the main result in this paper. 
See \cite[Corollary 4.8]{kod1} and \cite[Proposition 3.26]{NW} for the unital case. 
\begin{thm}\label{thm:main result}
Let $A$ be a simple exact $\sigma$-unital stably projectionless C$^*$-algebra with a unique traical state $\tau$ and no unbounded trace. 
Assume that $\mathrm{Cu}(A)$ is almost unperforated and 
$A$ has almost stable rank one. Then 
the 
following sequence is exact:
\[\begin{CD}
      {1} @>>> \mathrm{Out}(A) @>\rho_A>> \mathrm{Pic}(A) @>T>> \mathcal{F}(A)
 @>>> {1} \end{CD}. \] 
\end{thm}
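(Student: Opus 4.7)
The plan is to verify the four exactness conditions for the sequence, with the real work concentrated at exactness in the middle ($\mathrm{Ker}(T) \subseteq \mathrm{Im}(\rho_A)$), which is where the hypotheses on the Cuntz semigroup come in.

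First I would dispose of the easy endpoints. Surjectivity of $T$ onto $\mathcal{F}(A)$ is automatic since $\mathcal{F}(A) := \mathrm{Im}(T)$ by definition. For injectivity of $\rho_A$, suppose $[\mathcal{E}_{\alpha}^A]=[\mathcal{E}_{\mathrm{id}}^A]$ in $\mathrm{Pic}(A)$; by the criterion recalled in Section~\ref{sec:Picard} (namely $\mathcal{E}_\alpha^A\cong\mathcal{E}_\beta^A$ iff $\alpha=\mathrm{ad}\,u\circ\beta$ for some unitary $u\in M(A)$), $\alpha$ is inner, so $[\alpha]=1$ in $\mathrm{Out}(A)$. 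Also $\mathrm{Im}(\rho_A)\subseteq\mathrm{Ker}(T)$ was already recorded in Proposition~\ref{pro:key fundamental}, where $T([\mathcal{E}_\alpha^A])=1$ was verified via the strictly positive element $s\in A$.

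The main step is the reverse containment $\mathrm{Ker}(T)\subseteq\mathrm{Im}(\rho_A)$. Let $[\mathcal{E}]\in\mathrm{Pic}(A)$ with $T([\mathcal{E}])=1$. Since $A$ is $\sigma$-unital, as noted before Proposition~\ref{pro:key fundamental} we can realize $\mathcal{E}\cong\overline{hH_A}$ as a right Hilbert $A$-module for some positive $h\in A\otimes\mathbb{K}$ with $\overline{h(A\otimes\mathbb{K})h}\cong A$. Then $d_\tau(h)=T([\mathcal{E}])=1$. Let $s$ be a strictly positive element of $A$ viewed as $s\otimes e_{11}\in A\otimes\mathbb{K}$; since $\tau$ is a tracial state, $d_\tau(s\otimes e_{11})=1$ as well. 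Both $h$ and $s\otimes e_{11}$ are nonzero positive elements of $A\otimes\mathbb{K}$, and $A$ is stably projectionless, exact, with $\mathrm{Cu}(A)$ almost unperforated, so Corollary~\ref{pro:ERS} applies and yields $h\sim s\otimes e_{11}$ in the Cuntz semigroup. Since $A$ has almost stable rank one, Proposition~\ref{pro:CEIR} upgrades Cuntz equivalence to an isomorphism of right Hilbert modules $\overline{hH_A}\cong\overline{(s\otimes e_{11})H_A}\cong\mathcal{X}_A$. Therefore $\mathcal{E}\cong\mathcal{X}_A$ as a right Hilbert $A$-module, and Proposition~\ref{pro:trivial right module} supplies an automorphism $\alpha\in\mathrm{Aut}(A)$ with $\mathcal{E}\cong\mathcal{E}_\alpha^A$ as an $A$-$A$-equivalence bimodule. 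Thus $[\mathcal{E}]=\rho_A([\alpha])\in\mathrm{Im}(\rho_A)$.

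The only real obstacle is the equivalence $h\sim s\otimes e_{11}$: it is the place where all three assumptions (almost unperforation, almost stable rank one, stably projectionless) are used together, and where Section~3 is leveraged. Everything else is bookkeeping from Section~\ref{sec:Picard}. Assembling the four pieces gives exactness of the stated sequence.
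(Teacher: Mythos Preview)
Your proof is correct and follows essentially the same route as the paper: the easy endpoints are handled via the definition of $\mathcal{F}(A)$, the injectivity of $\rho_A$ (the paper cites \cite[Corollary 3.2]{BGR}, which is the same criterion you invoke), and Proposition~\ref{pro:key fundamental}; the nontrivial inclusion $\mathrm{Ker}(T)\subseteq\mathrm{Im}(\rho_A)$ is obtained exactly as you do, by combining Corollary~\ref{pro:ERS} and Proposition~\ref{pro:CEIR} to identify $\mathcal{E}\cong\overline{(s\otimes e_{11})H_A}\cong\mathcal{X}_A$ and then applying Proposition~\ref{pro:trivial right module}. One small quibble with your closing commentary: almost stable rank one is not used to establish the Cuntz equivalence $h\sim s\otimes e_{11}$ itself (that uses only exactness, stable projectionlessness, and almost unperforation via Corollary~\ref{pro:ERS}) but rather to upgrade it to a Hilbert module isomorphism via Proposition~\ref{pro:CEIR}---though your proof body applies these results in the correct order.
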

\begin{proof}
It is clear that $T$ is onto by definition of $\mathcal{F}(A)$. 
We see that $\rho_A$ is one-to-one and $\mathrm{Im}(\rho_A)\subseteq\mathrm{Ker}(T)$ 
by \cite[Corollary 3.2]{BGR} and Proposition \ref{pro:key fundamental} respectively. 
We shall show that $\mathrm{Ker}(T)\subseteq \mathrm{Im}(\rho_A)$. 
Let $[\mathcal{E}]\in \mathrm{Ker}(T)$. Then Corollary \ref{pro:ERS} and 
Proposition \ref{pro:CEIR} imply $\mathcal{E}$ is isomorphic to 
$\overline{(s\otimes e_{11})H_A}$ as a right Hilbert $A$-module where 
$s$ is a strict positive element in $A$ and $e_{11}$ is a rank one projection in 
$\mathbb{K}$ because we have $d_{\tau}(s\otimes e_{11})=1$ by $\| \tau \| =1$. 
Since $\overline{(s\otimes e_{11})H_A}$ is isomorphic to $\mathcal{X}_A$ as a right Hilbert $A$-module, 
there exists some automorphism $\alpha$ such that $[\mathcal{E}]=[\mathcal{E}_\alpha^A]$ by Proposition \ref{pro:trivial right module}. 
Hence $[\mathcal{E}]\in \mathrm{Im}(\rho_A)$. 
\end{proof}

\begin{cor}\label{cor:main}
Let $A$ be a simple exact separable $\mathcal{Z}$-stable stably projectionless 
C$^*$-algebra with a unique tracial state $\tau$ and no unbounded trace. 
Then 
the 
following sequence is exact: 
\[\begin{CD}
      {1} @>>> \mathrm{Out}(A) @>\rho_A>> \mathrm{Pic}(A) @>T>> \mathcal{F}(A)
 @>>> {1} \end{CD}. \] 
\end{cor}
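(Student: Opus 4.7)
The plan is to derive the corollary as a direct application of Theorem \ref{thm:main result}, so the whole task reduces to verifying that the extra hypotheses of that theorem are automatic once we know $A$ is separable and $\mathcal{Z}$-stable. The hypotheses of Theorem \ref{thm:main result} that are not already in the statement of the corollary are: (i) $\sigma$-unitality, (ii) almost unperforation of $\mathrm{Cu}(A)$, and (iii) almost stable rank one.

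First I would observe that (i) is immediate: a separable C$^*$-algebra is automatically $\sigma$-unital, since a dense countable subset generates a countable approximate unit. For (ii), I would invoke R\o rdam's theorem (\cite[Theorem 4.5]{Ror}), which is recalled explicitly in the Cuntz-semigroup section of this paper: $\mathcal{Z}$-stability of $A$ implies that $\mathrm{Cu}(A)$ is almost unperforated. For (iii), I would cite Robert's result (\cite[Corollary 4.5]{Rob2}), also recalled in the paper, that every simple $\mathcal{Z}$-stable stably projectionless C$^*$-algebra has almost stable rank one; this uses precisely that $A$ is simple, stably projectionless and $\mathcal{Z}$-stable, all of which are hypotheses here.

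Having checked all three, I would then apply Theorem \ref{thm:main result} directly to conclude that the sequence
\[\begin{CD}
  {1} @>>> \mathrm{Out}(A) @>\rho_A>> \mathrm{Pic}(A) @>T>> \mathcal{F}(A) @>>> {1}
\end{CD}\]
is exact. There is no real obstacle: the entire content of the corollary lies in combining the two external results (R\o rdam's and Robert's) with the main theorem. The only thing worth being careful about is to confirm that exactness and stable projectionlessness are each used appropriately in those citations (exactness is not needed for either (ii) or (iii), but it \emph{is} needed in Theorem \ref{thm:main result} itself, through the invocation of Corollary \ref{pro:ERS} and the equivalence of almost unperforation with strict comparison), and that the fundamental group $\mathcal{F}(A)$ in the statement is indeed the image of $T$ as defined in Proposition \ref{pro:key fundamental}, which is the same object used in Theorem \ref{thm:main result}.
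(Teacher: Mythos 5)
Your proposal is correct and follows exactly the same route as the paper: the paper's proof cites precisely R\o rdam's theorem for almost unperforation, Robert's result for almost stable rank one, and then applies Theorem \ref{thm:main result}. Your additional remarks (separability gives $\sigma$-unitality; where exactness and stable projectionlessness enter) are accurate but just make explicit what the paper leaves implicit.
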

\begin{proof}
This is an immediate consequence of \cite[Theorem 4.5]{Ror}, \cite[Corollary 4.5]{Rob2} and Theorem \ref{thm:main result}. 
\end{proof}
\begin{rem}\label{rem:non normal} 
There exists a unital simple AF algebra $A$ with a unique tracial state such that 
$\mathrm{Out}(A)$ is not a normal subgroup of $\mathrm{Pic}(A)$. (See \cite{Na2}.) 
Of course $A$ is a unital stably finite $\mathcal{Z}$-stable C$^*$-algebra. 
Therefore the corollary above shows that 
$\mathcal{Z}$-stable stably projectionless C$^*$-algebras are in this sense more well-behaved than 
unital stably finite $\mathcal{Z}$-stable C$^*$-algebras. 
\end{rem}
We shall show some examples. 

Let $\mathcal{W}_{2}$ be the Razak-Jacelon algebra studied in \cite{J}, \cite{Rob} and 
\cite{Rob2}, which has trivial K-groups and a unique tracial state and no unbounded trace. 
The Razak-Jacelon algebra $\mathcal{W}_{2}$ is constructed as an inductive limit C$^*$-algebra 
of Razak's building block in \cite{Raz}, that is,  
$$
A(n,m)= \left\{f\in C([0,1])\otimes M_m(\mathbb{C}) \ | \
\begin{array}{cc} 
f(0)=\mathrm{diag}(\overbrace{c,..,c}^k,0_{n}), 
f(1)=\mathrm{diag}(\overbrace{c,..,c}^{k+1}),  \\
c\in M_n(\mathbb{C})
\end{array} 
\right\}
$$
where $n$ and $m$ are natural numbers with $n|m$ and $k:=\frac{m}{n}-1$. 
Let $\mathcal{O}_{2}$ denote the Cuntz algebra generated by $2$ isometries 
$S_1$ and $S_2$. 
For every $\lambda_1,\lambda_2\in\mathbb{R}$ there exists by universality a one-parameter 
automorphism group $\alpha$ of $\mathcal{O}_2$ given by $\alpha_t (S_j)=e^{it\lambda_{j}}S_j$. 
Kishimoto and Kumjian showed that if 
$\lambda_{1}$ and $\lambda_{2}$ are all nonzero of the same sign and 
$\lambda_1$ and $\lambda_2$ generate $\mathbb{R}$ as a closed subgroup, then 
$\mathcal{O}_2\rtimes_{\alpha}\mathbb{R}$ is a simple stable projectionless C$^*$-algebra 
with unique (up to scalar multiple) densely defined lower semicontinuous trace 
in \cite{KK1} and \cite{KK2}. 
Moreover Robert \cite{Rob} showed that $\mathcal{W}_{2}\otimes \mathbb{K}$ is isomorphic to 
$\mathcal{O}_2\rtimes_{\alpha}\mathbb{R}$ for some $\lambda_1$ and $\lambda_2$. 
(See also \cite{Dean}.) In particular, $\mathcal{W}_{2}\otimes \mathbb{K}$ has a 
one parameter trace scaling automorphism group $\sigma$ (see \cite{KK1}). 

\begin{thm}\label{thm:example main}
The Picard group of Razak-Jacelon algebra $\mathcal{W}_{2}$ is isomorphic 
to a semidirect product  of $\mathrm{Out}(\mathcal{W}_{2})$ with $\mathbb{R}_{+}^\times$. 
Moreover if $A$ is a simple exact $\sigma$-unital C$^*$-algebra with 
a unique tracial state $\tau$ and no unbounded trace, then the Picard group of $A\otimes \mathcal{W}_2$ is isomorphic 
to a semidirect product  of $\mathrm{Out}(A\otimes \mathcal{W}_2)$ with $\mathbb{R}_{+}^\times$. 
\end{thm}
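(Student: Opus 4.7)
The plan is to apply Theorem~\ref{thm:main result} to both $\mathcal{W}_{2}$ and $A\otimes\mathcal{W}_{2}$ and then to split the resulting short exact sequence on the right, using the Kishimoto--Kumjian trace-scaling one-parameter automorphism group $\sigma$ of $\mathcal{W}_{2}\otimes\mathbb{K}$ recalled just above the statement.

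For $\mathcal{W}_{2}$: this algebra is $\sigma$-unital, simple, nuclear (hence exact), $\mathcal{Z}$-stable, stably projectionless, has a unique tracial state and no unbounded trace; the results of R\o rdam and Robert give almost unperforation of $\mathrm{Cu}(\mathcal{W}_{2})$ and almost stable rank one, so Theorem~\ref{thm:main result} produces the exact sequence of the statement with $\mathcal{W}_{2}$ in place of $A$. After reparametrizing $\sigma$ if necessary we may assume $\tau\otimes\mathrm{Tr}\circ\sigma_{t}=e^{-t}\,\tau\otimes\mathrm{Tr}$, so $S([\sigma_{t}])=e^{-t}$ and in particular $\mathfrak{S}(\mathcal{W}_{2})=\mathbb{R}_{+}^{\times}$; Proposition~\ref{pro:key spliting} then identifies $\mathcal{F}(\mathcal{W}_{2})=\mathfrak{S}(\mathcal{W}_{2})=\mathbb{R}_{+}^{\times}$.

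To split the sequence, let $\Psi:\mathrm{Out}(\mathcal{W}_{2}\otimes\mathbb{K})\to\mathrm{Pic}(\mathcal{W}_{2})$ denote the isomorphism of Proposition~\ref{pro:key spliting}, which satisfies $T\circ\Psi=S^{-1}$, and define
\[
j:\mathbb{R}_{+}^{\times}\longrightarrow\mathrm{Pic}(\mathcal{W}_{2}),\qquad j(\lambda):=\Psi([\sigma_{\log\lambda}]).
\]
This $j$ is a group homomorphism, being the composition of the one-parameter group $t\mapsto[\sigma_{t}]\in\mathrm{Out}(\mathcal{W}_{2}\otimes\mathbb{K})$ with the group isomorphism $\Psi$, and $T(j(\lambda))=S([\sigma_{\log\lambda}])^{-1}=e^{\log\lambda}=\lambda$, so $j$ is a section of $T$. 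Hence the short exact sequence splits and $\mathrm{Pic}(\mathcal{W}_{2})\cong\mathrm{Out}(\mathcal{W}_{2})\rtimes\mathbb{R}_{+}^{\times}$.

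For the second assertion I would run the same argument with $\mathcal{W}_{2}$ replaced by $A\otimes\mathcal{W}_{2}$, applying Theorem~\ref{thm:main result} directly since $A\otimes\mathcal{W}_{2}$ need not be separable. The hypotheses transfer routinely: simplicity, exactness, $\sigma$-unitality and $\mathcal{Z}$-stability are inherited using nuclearity of $\mathcal{W}_{2}$, stably projectionlessness comes from $\mathcal{W}_{2}\otimes\mathbb{K}$ having no nonzero projections, almost unperforation and almost stable rank one follow from $\mathcal{Z}$-stability, and a standard tensor-product argument together with the hypotheses on $A$ yields a unique tracial state and no unbounded densely defined trace on $A\otimes\mathcal{W}_{2}$. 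The trace-scaling group $\mathrm{id}_{A}\otimes\sigma$ on $A\otimes\mathcal{W}_{2}\otimes\mathbb{K}$ then plays the role of $\sigma$, giving $\mathcal{F}(A\otimes\mathcal{W}_{2})=\mathbb{R}_{+}^{\times}$ and the section $\lambda\mapsto\Psi([\mathrm{id}_{A}\otimes\sigma_{\log\lambda}])$ of $T$. The main obstacle I foresee is precisely this last tensor-product verification of uniqueness and boundedness of the trace on $A\otimes\mathcal{W}_{2}$; once that standard fact is in hand, the remainder is a uniform application of the already-developed machinery.
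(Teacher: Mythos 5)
Your proposal follows essentially the same route as the paper: obtain the exact sequence from Theorem \ref{thm:main result} (the paper invokes Corollary \ref{cor:main} via $\mathcal{Z}$-stability of $\mathcal{W}_2$, which amounts to the same verification), and then split it on the right by feeding the Kishimoto--Kumjian trace-scaling flow $\sigma$ (resp.\ $\mathrm{id}\otimes\sigma$) through the isomorphism $\Psi$ of Proposition \ref{pro:key spliting}; your explicit section $j(\lambda)=\Psi([\sigma_{\log\lambda}])$ is exactly what the paper's one-line appeal to the trace-scaling group means, just written out, and your observation that one should apply Theorem \ref{thm:main result} rather than the separability-assuming corollary when $A$ is merely $\sigma$-unital is if anything more careful than the paper. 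The one substantive divergence is your justification that $A\otimes\mathcal{W}_2$ is stably projectionless: asserting that this ``comes from $\mathcal{W}_2\otimes\mathbb{K}$ having no nonzero projections'' is not a proof, since a projection in $A\otimes\mathcal{W}_2\otimes\mathbb{K}$ need not be detected by projections in the tensor factors (stable projectionlessness is not formally preserved under tensoring). The paper instead derives stable projectionlessness from the existence of the trace-scaling one-parameter group $\mathrm{id}\otimes\sigma$ on $A\otimes\mathcal{W}_2\otimes\mathbb{K}$ (together with the uniqueness of the trace), and you should replace your justification by that argument; otherwise the hypotheses of Theorem \ref{thm:main result} are not all verified. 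Your remaining caveat --- that uniqueness and boundedness of the trace on $A\otimes\mathcal{W}_2$ requires the exactness hypothesis on $A$ --- is correctly identified and is indeed the standard fact the paper leaves implicit.
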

\begin{proof}
Note that we see that $A\otimes \mathcal{W}_2$ is stably projectionless C$^*$-algebra 
because $A\otimes\mathcal{W}_{2}\otimes\mathbb{K}$ has a 
one parameter trace scaling automorphism group $\mathrm{id}\otimes\sigma$. 
Since $\mathcal{W}_{2}$ is $\mathcal{Z}$-stable, we have the following exact sequence: 
\[\begin{CD}
      {1} @>>> \mathrm{Out}(A\otimes \mathcal{W}_2) @>\rho_A>> \mathrm{Pic}(A\otimes \mathcal{W}_2) @>T>> \mathcal{F}(A\otimes \mathcal{W}_2)
 @>>> {1} \end{CD} \] 
by Corollary \ref{cor:main}. 
By Proposition \ref{pro:key spliting}, we see that $\mathcal{F}(A\otimes\mathcal{W}_2)=\mathbb{R}_{+}^\times$ and the exact sequence above 
splits because $A\otimes\mathcal{W}_2\otimes\mathbb{K}$ has a one parameter trace scaling automorphism group. 
Consequently $\mathrm{Pic}(A\otimes\mathcal{W}_2)$ is isomorphic to $\mathrm{Out}(A\otimes\mathcal{W}_2)\rtimes\mathbb{R}_{+}^\times$. 
\end{proof} 

\begin{rem}
(i) Note that we have 
$$
\mathrm{Out}(\mathcal{W}_{2}\otimes\mathbb{K})\cong 
\mathrm{Out}(\mathcal{W}_{2})\rtimes \mathbb{R}_{+}^\times.
$$ 
(ii) We do not assume that $A$ is nuclear in the theorem above. 
Hence we have 
$$
\mathrm{Pic}(\mathcal{W}_{2}\otimes C_r^*(\mathbb{F}_n))\cong 
\mathrm{Out}(\mathcal{W}_{2}\otimes C_r^*(\mathbb{F}_n))\rtimes \mathbb{R}_{+}^\times 
$$ 
where $\mathbb{F}_n$ is a non-amenable free group with $n$ generators. Moreover 
Proposition \ref{pro:hereditary} shows that every nonzero hereditary subalgebra of 
$\mathcal{W}_{2}\otimes C_r^*(\mathbb{F}_n)$ is isomorphic to 
$\mathcal{W}_{2}\otimes C_r^*(\mathbb{F}_n)$. 
\ \\
(iii) Let $B$ be a simple unital AF algebra with two extremal tracial states. 
Then $\mathcal{W}_{2}\otimes B$ is a simple stably projectionless C$^*$-algebra with 
two extremal tracial states and in the class of Robert's classification theorem 
\cite{Rob}. It can be checked that $\mathrm{Out}(\mathcal{W}_{2}\otimes B)$ is 
not a normal subgroup of $\mathrm{Pic}(\mathcal{W}_{2}\otimes B)$ by 
Robert's classification theorem and a similar proposition as \cite[Proposition 1.5]{kod1}. 
(We need to replace the K$_0$-groups with the trace spaces.) 
\end{rem}

\section{$\mathcal{Z}$-stability of stably projectionless C$^*$-algebras}\label{sec:Z-stable}
In this section we shall generalize the result of Matui and Sato in \cite{MS} to stably projectionless C$^*$-algebras. 
Note that our arguments are essentially based on their arguments. 

We shall review some results of Kirchberg's central sequence algebra in \cite{Kir2}. We denote by  $\tilde{A}$ the unitization algebra of $A$. Note that we consider $A=\tilde{A}$ when $A$ is unital. 
For a separable C$^*$-algebra $A$, set 
$$
c_0(A):=\{(a_n)_{n\in\mathbb{N}}\in \ell^{\infty}(\mathbb{N}, A)\; |\; \lim_{n \to \infty}\| a_n\| =0 \}, \; 
A^{\infty}:=\ell^{\infty}(\mathbb{N}, A)/c_0(A). 
$$
Let $B$ be a C$^*$-subalgebra of $A$. 
We identify $A$ and $B$ with the C$^*$-subalgebras of $A^\infty$ consisting of equivalence classes of 
constant sequences.  Put 
$$
A_{\infty}:=A^{\infty}\cap A^{\prime},\; \mathrm{Ann}(B,A^{\infty}):=\{(a_n)_n\in A^{\infty}\cap B^{\prime}\; |\; (a_n)_nb =0
\;\mathrm{for}\;\mathrm{any}\; b\in B \}.
$$
Then $\mathrm{Ann}(B,A^{\infty})$ is an closed two-sided ideal of $A^{\infty}\cap B^{\prime}$, and define 
$$
F(A):=A_{\infty}/\mathrm{Ann}(A,A^{\infty}).
$$
We call $F(A)$ the \textit{central sequence algebra} of $A$. A sequence $(a_n)_n$ is said to be 
\textit{central} if $\lim_{n\to \infty}\| a_na-aa_n \| =0$ for all $a\in A$. A central sequence 
is a representative of an element in $A_{\infty}$. 
Since $A$ is separable, $A$ has a countable approximate unit $\{h_n\}_{n\in\mathbb{N}}$. 
It is easy to see that $[(h_n)_n]$ is a unit in $F(A)$. If $A$ is unital, then $F(A)=A_{\infty}$. 
Moreover we see that  $F(A)$ is isomorphic to $M(A)^\infty\cap A^{\prime}/\mathrm{Ann}(A,M(A)^{\infty})$ since 
for any $(y_n)_n\in M(A)^\infty\cap A^{\prime}$, $(y_nh_n)_n$ is a central sequence in $A$ and 
$[(y_n)_n]=[(y_nh_n)_n]$ in $M(A)^\infty\cap A^{\prime}/\mathrm{Ann}(A,M(A)^{\infty})$. 
Let $\{e_{ij}\}_{i,j\in\mathbb{N}}$ be the standard matrix units of $\mathbb{K}$. 
Define a map $\varphi$ of $F(A)$ to $F(A\otimes\mathbb{K})$ by $\varphi ([(x_n)_n])=[(x_n\otimes\sum_{i=1}^ne_{ii})_n]$. 
Then it is easily seen that $\varphi$ is a well-defined injective homomorphism. 
A similar argument as above shows any element in $F(A\otimes\mathbb{K})$ is equal to $[(\sum_{i,j=1}^nx_{n,i,j}\otimes e_{i,j})_n]$ for some 
sequence $\{x_{n,i,j}\}_{n\in\mathbb{N}}$ in $A$. 
Using matrix units and the centrality of sequence, we can show that if $i\neq j$, then $\lim_{n\to \infty}x_{n,i,j}a=0$ for any $a\in A$ and 
$\lim_{n\to\infty}(x_{n,i,i}-x_{n,j,j})a=0$ for any $i,j\in\mathbb{N}$ and $a\in A$. 
Since $M_{\infty} (A)$ is dense in $A\otimes\mathbb{K}$, it can be checked that $\varphi$ is surjective. 
Hence $F(A)$ is isomorphic to $F(A\otimes\mathbb{K})$. (See \cite[Proposition 1.9]{Kir2} for more general cases.) 

We denote by $I(k,k+1)$ the prime dimension drop algebra 
$$
\{f\in C([0,1])\otimes M_{k}(\mathbb{C})\otimes M_{k+1}(\mathbb{C})\; |\; f(0)\in M_{k}(\mathbb{C})
\otimes \mathrm{id}_{k+1}, f(1)\in \mathrm{id}_{k}\otimes M_{k+1}(\mathbb{C}) \}
$$
for $k\in\mathbb{N}$. 
The Jiang-Su algebra $\mathcal{Z}$ is constructed as an inductive limit 
C$^*$-algebra of prime dimension drop algebras in \cite{JS}. 
We shall show the following proposition (which is based on \cite [Proposition 2.2]{TW2}) by a similar way as in \cite[Theorem 7.2.2]{Ror1}. 
See \cite[Proposition 4.11]{Kir2} for more general cases. 
\begin{pro}\label{pro:z-stable kirchberg}
Let $A$ be a separable C$^*$-algebra. There exist a unital homomorphism of 
the prime dimension drop algebra $I(k,k+1)$ to $F(A)$ for any $k\in\mathbb{N}$ if and only if $A$ is $\mathcal{Z}$-stable. 
\end{pro}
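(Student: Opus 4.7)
The plan is to establish the equivalence via a Kirchberg-style central sequence argument, following the template of R\o rdam's proof of \cite[Theorem 7.2.2]{Ror1} together with Toms-Winter \cite[Proposition 2.2]{TW2}.

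For the ``only if'' direction, I would use that $\mathcal{Z}$ is strongly self-absorbing, so $A \cong A\otimes\mathcal{Z}^{\otimes\infty}$ whenever $A$ is $\mathcal{Z}$-stable. The Jiang-Su algebra $\mathcal{Z}$ contains a unital copy of every prime dimension drop algebra $I(k,k+1)$ by its construction in \cite{JS}. The nested infinite tensor product structure provides a sequence of mutually asymptotically commuting unital copies of $\mathcal{Z}$ inside $A$; passing to the central sequence algebra produces a unital $*$-homomorphism $\mathcal{Z}\to F(A)$, which composed with the unital inclusion $I(k,k+1)\hookrightarrow\mathcal{Z}$ yields the desired homomorphism.

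For the ``if'' direction, the key step is to promote the family of unital homomorphisms $\varphi_k\colon I(k,k+1)\to F(A)$ into a single unital $*$-homomorphism $\mathcal{Z}\to F(A)$. Fix an inductive limit presentation $\mathcal{Z}=\lim_{\to}(I(k_n,k_n+1),\psi_n)$ with unital connecting maps $\psi_n$. Using the reindexing and $\epsilon$-test properties of $F(A)$ (see \cite[Section 1]{Kir2}), one can inductively perturb each $\varphi_{k_n}$ by an inner automorphism of $F(A)$ so that $\varphi_{k_{n+1}}\circ\psi_n$ agrees with $\varphi_{k_n}$ on a prescribed finite set within prescribed tolerance in $F(A)$; a standard diagonal argument then produces a unital $*$-homomorphism $\mathcal{Z}\to F(A)$. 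Finally, Kirchberg's characterization of $\mathcal{D}$-stability for strongly self-absorbing $\mathcal{D}$ (see \cite[Theorem 4.10]{Kir2}) gives $A\cong A\otimes\mathcal{Z}$.

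The main obstacle is the construction of the compatible sequence in the ``if'' direction: the given $\varphi_k$ carry no a priori compatibility with the connecting maps of any inductive limit presentation of $\mathcal{Z}$, so the technical heart of the argument is the careful use of the $\aleph_1$-saturation / $\epsilon$-test in $F(A)$ to correct successive $\varphi_{k_n}$ one step at a time. Once compatibility is achieved, both the passage to $\mathcal{Z}\to F(A)$ and the Kirchberg absorption theorem are essentially formal consequences.
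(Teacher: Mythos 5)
Your ``only if'' direction and your final appeal to Kirchberg are essentially sound, but the technical heart of your ``if'' direction --- exactly the step you flag as the main obstacle --- has a genuine gap. You propose to fix an inductive limit presentation $\mathcal{Z}=\lim_{\to}(I(k_n,k_n+1),\psi_n)$ and then perturb each given $\varphi_{k_n}$ by an inner automorphism of $F(A)$ so that $\varphi_{k_{n+1}}\circ\psi_n$ approximately agrees with $\varphi_{k_n}$. For that perturbation to exist you need the two unital $*$-homomorphisms $\varphi_{k_{n+1}}\circ\psi_n$ and $\varphi_{k_n}\colon I(k_n,k_n+1)\to F(A)$ to be approximately unitarily equivalent, and nothing gives you this: $F(A)$ is a huge, non-separable algebra with no classification machinery, and two unital maps from a prime dimension drop algebra into a general unital C$^*$-algebra are typically \emph{not} approximately unitarily equivalent (for instance they may induce different tracial data, i.e.\ different measures on $[0,1]$, under a trace on the target --- and $F(A)$ can carry traces). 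The $\epsilon$-test/saturation of $F(A)$ lets you assemble an exactly coherent system out of an approximately coherent one, but it does not manufacture the coherence. The paper avoids this entirely by invoking the argument of \cite[Proposition 2.2]{TW2} together with the specific structure of the Jiang--Su construction in \cite{JS}; the point there is that one does not intertwine arbitrary given maps against a fixed presentation of $\mathcal{Z}$, but rather exploits the particular (trace-collapsing) form of the Jiang--Su connecting maps. If you want to keep your outline, you must either reproduce that argument or cite a result such as the Dadarlat--Toms universal property of $\mathcal{Z}$; the inner-perturbation scheme as stated does not close.

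On the remaining steps: your route from a unital homomorphism $\mathcal{Z}\to F(A)$ to $A\cong A\otimes\mathcal{Z}$ via Kirchberg's absorption theorem is legitimate and is acknowledged by the paper (which cites \cite[Proposition 4.11]{Kir2} for the general case), but the paper instead gives a self-contained argument: it realizes $\alpha(\mathcal{Z})$ and the canonical copy $1\otimes\mathcal{Z}$ as commuting unital copies of $\mathcal{Z}$ inside $M(A\otimes\mathcal{Z})^{\infty}\cap\varphi(A)'/\mathrm{Ann}$, uses the approximately inner flip and K$_1$-injectivity of $\mathcal{Z}$ to produce liftable unitaries conjugating one onto the other, and concludes by an Elliott intertwining as in \cite[Propositions 2.3.5 and 7.2.1]{Ror1}. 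In the converse direction, note that for non-unital $A$ there are no ``unital copies of $\mathcal{Z}$ inside $A$''; the copies live in $\tilde{A}\otimes(\otimes_{k}\mathcal{Z})\subseteq M(A\otimes(\otimes_k\mathcal{Z}))$, and one must pass through the identification $F(A)\cong M(A)^{\infty}\cap A'/\mathrm{Ann}(A,M(A)^{\infty})$ established before the proposition --- this is precisely why the statement is phrased in terms of $F(A)$ rather than $A_\infty\cap A'$, and your write-up should make that passage explicit.
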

\begin{proof}
Assume that there exist a unital homomorphism of 
the prime dimension drop algebra $I(k,k+1)$ to $F(A)$ for any $k\in\mathbb{N}$. 
By a similar argument as in \cite [Proposition 2.2]{TW2} and the construction of $\mathcal{Z}$ in \cite{JS}, 
we see that there exists a unital homomorphism $\alpha$ of $\mathcal{Z}$ to $F(A)$. 

Let $\varphi$ be an injective homomorphism of $A$ to $A\otimes\mathcal{Z}$ defined by $\varphi (a)=a\otimes 1_{\mathcal{Z}}$, and 
put $C:=M(A\otimes\mathcal{Z})^{\infty}\cap\varphi (A)^{\prime}/\mathrm{Ann}(\varphi (A),M(A\otimes\mathcal{Z})^{\infty})$. 
Then we can regard $\alpha$ as a unital homomorphism of $\mathcal{Z}$ to $C$ since $F(A)$ is isomorphic to 
$M(A)^\infty\cap A^{\prime}/\mathrm{Ann}(A,M(A)^{\infty})$. Define a unital homomorphism of $\beta$ of $\mathcal{Z}$ to 
$M(A\otimes\mathcal{Z})^{\infty}\cap\varphi (A)^{\prime}$ by $\beta (x)=(1_{M(A)}\otimes x)_n$, and let $[\beta ]: \mathcal{Z}\to C$ be the quotient homomorphism 
of $\beta$. 
Then we see that C$^*(\alpha (\mathcal{Z}),[\beta ](\mathcal{Z}))$ in $C$ is isomorphic to 
$\mathcal{Z}\otimes\mathcal{Z}$. Since $\mathcal{Z}$ has approximately inner flip and is K$_1$-injective (see \cite[Proposition 1.13]{TW1}), 
there exists a sequence $\{w_m\}_{m\in\mathbb{N}}$ of unitary elements in $C$ such that 
$\lim_{m\to\infty}w_m^*[\beta ](x)w_m=\alpha (x)$ for any $x\in \mathcal{Z}$ and $w_m$ is in the connected component of $1_{C}$ in $U(C)$ for any $m\in\mathbb{N}$. 
Since $w_m$ is in the connected component of $1_{C}$ in $U(C)$, there exists a unitary element $u_m$ in $M(A\otimes\mathcal{Z})^{\infty}\cap\varphi (A)^{\prime}$ 
such that $[u_m]=w_m$ for any $m\in\mathbb{N}$. 
For any $a\in A, x\in\mathcal{Z}$ and all $y\in M(A\otimes\mathcal{Z})^{\infty}\cap\varphi (A)^{\prime}$ such that $[y]=\alpha (x)$, we have 
$$
y\varphi (a)=\lim_{m\to\infty}u_m^*\beta (x)u_m\varphi (a)=\lim_{m\to\infty}u_m^*\beta (x)\varphi (a)u_m=\lim_{m\to\infty}u_m^*(a\otimes x)u_m
$$  
by $[y]=\lim_{m\to\infty}[u_m^*\beta (x)u_m]$ and the definition of $\mathrm{Ann}(\varphi (A),M(A\otimes\mathcal{Z})^{\infty})$. 
Since $[y]=\alpha (x)$, we can take $y\in M(\varphi (A))^\infty\cap \varphi (A)^{\prime}\subseteq M(A\otimes\mathcal{Z})^{\infty}\cap\varphi (A)^{\prime}$. 
Hence we see that $\lim_{m\to\infty}u_m^*(a\otimes x)u_m$ is an element in $\varphi (A)^{\infty}$. 
Therefore for any $z\in A\otimes \mathcal{Z}$, $\lim_{m\to \infty}d(u^*_mzu_m, \varphi (A)^{\infty})=0$. 
We see that $A$ is $\mathcal{Z}$-stable by a similar argument as in \cite[Proposition 2.3.5 and Proposition 7.2.1]{Ror1}. 

Conversely assume that $A$ is $\mathcal{Z}$-stable. 
Then $A$ is isomorphic to $A\otimes (\otimes_{k=1}^\infty \mathcal{Z})$. 
Since $M(A)$ is the largest unital C$^*$-algebra that contains $A$ as an essential ideal, 
$\tilde{A}\otimes (\otimes_{k=1}^\infty \mathcal{Z})$ is a unital subalgebra of  $M(A\otimes(\otimes_{k=1}^\infty \mathcal{Z}))$. 
Hence there exists a unital homomorphism of $\mathcal{Z}$ to $M(A)^\infty\cap A^{\prime}$. 
Therefore we see that there exists a unital homomorphism of 
the prime dimension drop algebra $I(k,k+1)$ to $F(A)$ for any $k\in\mathbb{N}$ because 
$F(A)$ is isomorphic to $M(A)^\infty\cap A^{\prime}/\mathrm{Ann}(A,M(A)^{\infty})$. 
\end{proof}

We denote by $\mathrm{Ped}(A)$ the Pedersen ideal of $A$. The Pedersen ideal $\mathrm{Ped}(A)$ is a minimal dense two-sided ideal of $A$. 
Hence every densely defined lower semicontinuous trace $\tau$ on $A$ is finite on $\mathrm{Ped}(A)$ because $\tau$ is finite on a dense two-sided ideal. 
Moreover for any positive element $h$ in $\mathrm{Ped}(A)$, $\overline{hAh}$ is contained in $\mathrm{Ped}(A)$. 
We refer the reader to \cite[II 5.2.4]{Bla} and \cite[Section 5.6]{Ped2} for details of the Pedersen ideal. 
If $A$ is unital, every densely defined lower semicontinuous trace on $A$ is bounded. 
Hence if $A$ is simple and $A\otimes\mathbb{K}$ has a nonzero projection, then 
there exists a full hereditary subalgebra $B$ of $A$ such that 
every densely defined lower semicontinuous trace on $B$ is bounded. 
In general, we have the following proposition. 
\begin{pro}\label{pro:induced trace}
Let $A$ be a $\sigma$-unital simple C$^*$-algebra. Then there exists 
a full hereditary subalgebra $B$ of $A$ such that 
every densely defined lower semicontinuous trace on $B$ is bounded. 
\end{pro}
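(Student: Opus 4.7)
The plan is to take $B = \overline{gAg}$ where $g = (h-\epsilon)_+$ for a suitable positive $h$ in the Pedersen ideal $\mathrm{Ped}(A)$ and small $\epsilon > 0$. Concretely, I would pick any nonzero positive $h \in \mathrm{Ped}(A)$ (such $h$ exists because $\mathrm{Ped}(A)$ is a dense two-sided ideal), and after rescaling assume $\|h\| = 1$; then fix $\epsilon \in (0,1)$, set $g := (h-\epsilon)_+$, and let $B := \overline{gAg}$. Since $g \neq 0$, $B$ is a $\sigma$-unital hereditary subalgebra of $A$, and simplicity of $A$ makes $B$ automatically full.

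To verify the trace condition, let $\tau$ be a densely defined lower semicontinuous trace on $B$. By Corollary \ref{cor:folklore trace}, $\tau$ is the restriction of some densely defined lower semicontinuous trace $\tilde\tau$ on $A$, and since $h \in \mathrm{Ped}(A)$ one has $\tilde\tau(h) < \infty$. The family $\{g^{1/n}\}_{n \in \mathbb{N}}$ is an approximate unit for $B$, so the norm of $\tau$ is computed as $\|\tau\| = \lim_n \tau(g^{1/n})$. The key estimate is the functional-calculus inequality
\[
g^{1/n} \leq \epsilon^{-1} h \quad \text{in } A,
\]
which holds because on $\mathrm{spec}(h) \subseteq [0,1]$ the function $t \mapsto (t-\epsilon)_+^{1/n}$ vanishes on $[0,\epsilon]$ and is bounded by $1 \leq t/\epsilon$ on $(\epsilon,1]$. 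Consequently $\tau(g^{1/n}) = \tilde\tau(g^{1/n}) \leq \tilde\tau(h)/\epsilon$ uniformly in $n$, so $\|\tau\| \leq \tilde\tau(h)/\epsilon < \infty$, as required.

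The conceptual point, and the only thing that is not quite automatic, is the use of the cut-down $(h-\epsilon)_+$ rather than $h$ itself: without it one would only know that $\tilde\tau$ is finite on individual elements of $B \subseteq \mathrm{Ped}(A)$, which is strictly weaker than boundedness of the trace as a linear functional on $B$. Replacing $h$ by $(h-\epsilon)_+$ is precisely what forces every $g^{1/n}$, and hence the whole approximate unit of $B$, to be dominated by a single scalar multiple of an element of $\mathrm{Ped}(A)$; once this observation is in place, neither simplicity nor $\sigma$-unitality poses any additional obstacle.
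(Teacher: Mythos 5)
Your proof is correct, but it takes a genuinely different route from the paper, and your closing justification for why the detour is needed rests on a misconception. The paper simply takes $B=\overline{hAh}$ for any nonzero positive $h\in\mathrm{Ped}(A)$ and invokes two facts: (1) $\overline{hAh}\subseteq\mathrm{Ped}(A)$ (cited from Blackadar II.5.2.4 / Pedersen 5.6), so every $\tilde\tau\in T(A)$ is finite on all of $B_+$ and hence defines a positive linear functional on the C$^*$-algebra $B$; and (2) every everywhere-defined positive linear functional on a C$^*$-algebra is \emph{automatically} bounded. Combined with Corollary \ref{cor:folklore trace} this finishes the proof in two lines. So your assertion that finiteness of $\tilde\tau$ on the individual elements of $B$ is ``strictly weaker than boundedness'' is false for positive functionals on C$^*$-algebras --- that automatic-continuity fact is precisely the paper's argument, and it makes the cut-down $(h-\epsilon)_+$ unnecessary. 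That said, your argument is sound as written: the inequality $g^{1/n}\leq\epsilon^{-1}h$ does hold by functional calculus, $\{g^{1/n}\}$ is an approximate unit for $B=\overline{gAg}$, and $\sup_n\tau(g^{1/n})<\infty$ does force $\|\tau\|<\infty$ (via lower semicontinuity, $\tau(b)\leq\liminf_n\tau(b^{1/2}g^{1/n}b^{1/2})\leq\sup_n\tau(g^{1/n})$ for positive contractions $b$ in $B$ --- a step worth spelling out). What your version buys is modest but real: it avoids appealing to the fact that the hereditary subalgebra generated by a positive element of $\mathrm{Ped}(A)$ stays inside $\mathrm{Ped}(A)$, using only that traces are finite at the single element $h$, and it yields the explicit bound $\|\tau\|\leq\tilde\tau(h)/\epsilon$. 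The paper's version is shorter and identifies the cleaner conceptual reason the statement is true.
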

\begin{proof}
Let $h$ be a nonzero positive element in $\mathrm{Ped}(A)$. 
Then any $\tau\in T(A)$ restricts to a bounded trace on $\overline{hAh}$ because every positive liner functional is automatically bounded. 
We obtain the conclusion by Corollary \ref{cor:folklore trace}. 
\end{proof}

If $A$ is separable, then $A$ is $\mathcal{Z}$-stable if and only if 
some full hereditary subalgebra is $\mathcal{Z}$-stable by Proposition \ref{pro:z-stable kirchberg} and Brown's theorem in \cite{B} 
since $F(A)$ is isomorphic to $F(A\otimes\mathbb{K})$. (See also \cite{TW1}.) 
Therefore we may assume that $A$ has no unbounded trace by the proposition above. 
Note that if $A$ has strict comparison and no unbounded trace, then for any 
$a,b\in A_{+}$ satisfying $d_{\tau}(a)< d_{\tau} (b)$ for all $\tau \in T_1(A)$, we have 
$a\precsim b$. 

\begin{pro}\label{pro:key trace}
Let $A$ be a separable C$^*$-algebra such that $T_1(A)$ is a non-empty compact set, and let $\{h_m\}_{m\in\mathbb{N}}$ be a countable 
approximate unit for $A$ and $\epsilon >0$. 
Then there exists a natural number $N$ such that 
$$
\max_{\tau\in T_1(A)}|\tau (f_n)-\tau(h_mf_n)| <\epsilon
$$
for any $m\geq N$ and for any sequence $(f_n)_{n\in\mathbb{N}}$ of  positive contractions in $A$. 
In particular, we have 
$$
\lim_{n\to\infty}\max_{\tau\in T_1(A)}| \tau (h_nf_n)-\tau (f_n) |=0.
$$ 
\end{pro}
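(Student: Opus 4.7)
The plan is to combine two ingredients: first, a Dini-type uniformization on the weak-$*$ compact set $T_1(A)$ for the sequence of continuous functions $\tau \mapsto \tau(h_m)$, and second, an $f$-independent tracial inequality $|\tau(f) - \tau(h_m f)| \leq 1 - \tau(h_m)$ valid for every positive contraction $f \in A$ and every $\tau \in T_1(A)$. Together these yield the first assertion uniformly in the sequence $(f_n)$, and the ``in particular'' statement follows by a diagonal extraction.

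For the Dini step, I would first replace $\{h_m\}$ by an increasing approximate unit of positive contractions, which is always available for a separable C$^*$-algebra. Each map $\pi_m \colon T_1(A) \to [0,1]$, $\pi_m(\tau) := \tau(h_m)$, is weak-$*$ continuous, and the sequence $(\pi_m)$ is pointwise nondecreasing. For each fixed $\tau \in T_1(A)$, since $\|h_m a - a\| \to 0$ for every $a \in A$ and $\tau$ is continuous of norm $1$, we get $\pi_m(\tau) \nearrow \|\tau\| = 1$. Dini's theorem, applied to this monotone sequence of continuous functions on the weak-$*$ compact set $T_1(A)$ converging pointwise to the continuous limit $1$, then produces $N$ with $\sup_{\tau \in T_1(A)}\bigl(1 - \tau(h_m)\bigr) < \epsilon$ for all $m \geq N$.

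For the tracial inequality, fix a positive contraction $f \in A$ and $\tau \in T_1(A)$. Cyclicity of the trace gives $\tau(h_m f) = \tau(f^{1/2} h_m f^{1/2})$, so
\[\tau(f) - \tau(h_m f) = \tau\bigl(f^{1/2}(1-h_m)f^{1/2}\bigr) = \tau\bigl((1-h_m)^{1/2} f (1-h_m)^{1/2}\bigr),\]
using trace cyclicity once more; both expressions denote elements of $A$ because $(1-h_m)^{1/2}-1 \in A$ by functional calculus. Since $f \leq 1$ in $\tilde{A}$, we have the operator inequality $(1-h_m)^{1/2} f (1-h_m)^{1/2} \leq 1 - h_m$, and extending $\tau$ to the state $\tilde{\tau}$ on $\tilde{A}$ with $\tilde{\tau}(1) = 1$ yields $\tau\bigl((1-h_m)^{1/2} f (1-h_m)^{1/2}\bigr) \leq 1 - \tau(h_m)$, the desired $f$-independent bound.

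Combining the two ingredients gives the first claim with the bound independent of $f_n$, hence uniform in the sequence variable. For the ``in particular'' assertion, applying the first claim with $\epsilon_k = 1/k$ produces indices $N_k$, and substituting $m = n \geq N_k$ gives $\max_\tau |\tau(f_n) - \tau(h_n f_n)| < 1/k$, forcing the limit to vanish. The main conceptual obstacle is upgrading the trivial pointwise statement $\tau(h_m) \to 1$ to a uniform one on $T_1(A)$; this is precisely where the weak-$*$ compactness of $T_1(A)$ and the monotonicity of the approximate unit are indispensable. The trace estimate is otherwise formal.
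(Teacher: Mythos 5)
Your proof is correct and follows essentially the same route as the paper's: Dini's theorem applied to the nondecreasing weak-$*$ continuous functions $\tau\mapsto\tau(h_m)$ on the compact set $T_1(A)$, combined with the $f$-independent estimate $|\tau(f_n)-\tau(h_mf_n)|=\tau\bigl((1-h_m)^{1/2}f_n(1-h_m)^{1/2}\bigr)\leq 1-\tau(h_m)$. The only difference is that you make explicit the monotonicity of the approximate unit needed for Dini's theorem, which the paper assumes implicitly by writing $\tau(h_m)\leq\tau(h_{m+1})$.
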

\begin{proof}
For any $\tau\in T_1(A)$, we have $\tau(h_m)\leq \tau(h_{m+1})$ and $\lim\tau(h_m)=1$. 
By Dini's theorem, there exists a natural number $N$ such that 
$$
\max_{\tau\in T_1(A)}|1-\tau(h_m)| <\epsilon
$$
for any $m\geq N$. 
For any sequence $(f_n)_{n\in\mathbb{N}}$ of  positive contractions in $A$,
\begin{align*}
\max_{\tau\in T_1(A)}|\tau (f_n)-\tau(h_mf_n)| 
& =\max_{\tau\in T_1(A)}|\tau ((1-h_m)^{1/2}f_n(1-h_m)^{1/2})| \\
& \leq \max_{\tau\in T_1(A)}|1-\tau(h_m)| <\epsilon 
\end{align*}
for $m\geq N$. 
\end{proof}

We recall some definitions. 
\begin{Def}
Let $A$ be a separable C$^*$-algebra with no unbounded trace. Assume that $T_1(A)$ is a non-empty compact set. 
We say that $A$ has \textit{property (SI)} if for any central sequences $(e_n)_n$ and $(f_n)_n$ of positive contractions 
in $A$ satisfying 
$$
\lim_{n\to\infty}\max_{\tau\in T_1(A)}\tau (e_n)=0,\;\lim_{m\to\infty}\liminf_{n\to\infty}\min_{\tau\in T_1(A)}\tau (f_n^m)>0,
$$
there exists a central sequence $(s_n)_n$ in $A$ such that 
$$
\lim_{n\to\infty}\| s_n^*s_n-e_n\| =0, \; \lim_{n\to\infty}\| f_ns_n-s_n\| =0. 
$$
For a completely positive map $\varphi$ of $\tilde{A}$ to $\tilde{A}$, we say that $\varphi$ \textit{can be excised in small central sequences in $A$}
if for any central sequences $(e_n)_n$ and $(f_n)_n$ of positive contractions in $A$ satisfying the property above, there exists 
a sequence $(s_n)_{n\in\mathbb{N}}$ in $A$ such that 
$$
\lim_{n\to\infty}\| s_n^*as_n-\varphi (a)e_n\| =0\; \text{for any }a\in \tilde{A}, \; \lim_{n\to\infty}\| f_ns_n-s_n\| =0. 
$$
 
\end{Def}
\begin{rem}\label{rem:property si}
In the definition above, it is important that $e_n$ and $f_n$ are elements in $A$. 
We see that if $\mathrm{id}_{\tilde{A}}$ can be excised in small central sequences in $A$, then 
$A$ has property (SI) (see \cite[Proof of (iii)$\Rightarrow$(iv) of Theorem1.1]{MS}). 
\end{rem}

We shall generalize \cite[Lemma 4.6]{MS1} and \cite[Lemma 2.4]{MS} to non-unital C$^*$-algebras. 

\begin{lem}\label{lem:instead1}
Let $c$ be a positive element in a separable C$^*$-algebra $A$ such that $T_1(A)$ is a non-empty compact set, and 
let $\theta\in\mathbb{R}$. For any central sequence $(f_n)_n$ of positive contractions in $A$, we have 
$$
\limsup_{n\to\infty}\max_{\tau\in T_1(A)} |\tau (cf_n)-\theta\tau (f_n)|\leq 2\max_{\tau\in T_1(A)}|\tau (c)-\theta |.
$$
\end{lem}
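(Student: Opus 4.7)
The plan is to expand $\tau(cf_n)-\theta\tau(f_n)=\tau((c-\theta)f_n)$ and to decompose the self-adjoint element $c-\theta$ in $\tilde{A}$ into its positive and negative parts. I will extend each $\tau\in T_1(A)$ to $\tilde{A}$ by $\tau(1)=1$, and put $d:=(c-\theta)_+$, $e:=(c-\theta)_-$ in $\tilde{A}_+$. Then $c-\theta=d-e$, $de=0$, and $\tau(d)-\tau(e)=\tau(c)-\theta$, so that $|\tau(d)-\tau(e)|\leq\epsilon_{0}:=\max_{\tau\in T_1(A)}|\tau(c)-\theta|$ for every $\tau$.

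The trace identity then gives
\[
\tau((c-\theta)f_n)=\tau(f_n^{1/2}df_n^{1/2})-\tau(f_n^{1/2}ef_n^{1/2}).
\]
My first move is to separate off the ``mean part'' by writing $f_n=\tau(f_n)\cdot 1+(f_n-\tau(f_n)\cdot 1)$, obtaining
\[
\tau((c-\theta)f_n)=\tau(f_n)(\tau(c)-\theta)+\tau\bigl((c-\theta)(f_n-\tau(f_n)\cdot 1)\bigr),
\]
so the first summand is already bounded in modulus by $\tau(f_n)\cdot\epsilon_0\leq\epsilon_0$. It remains to show that the fluctuation term is controlled by $\epsilon_0+o(1)$ uniformly in $\tau$.

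For the fluctuation term, centrality of $(f_n)$ is decisive: $[f_n,c]\to 0$ in norm propagates to $[f_n,d]\to 0$ and $[f_n,e]\to 0$ via continuous functional calculus, and combined with $de=0$ this yields $df_ne\to 0$ and $ef_nd\to 0$ in norm. Consequently $f_n^{1/2}df_n^{1/2}$ and $f_n^{1/2}ef_n^{1/2}$ are asymptotically orthogonal positive elements, which lets one estimate the fluctuation by the oscillation of $c$ as detected by central sequences: in the factor GNS representation of each extremal $\tau$, $\pi_\tau(f_n-\tau(f_n)\cdot 1)\to 0$ in the strong operator topology because central sequences converge weakly to scalars in a factor. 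Passing from extremal traces to general ones via a Choquet decomposition and invoking a Dini-type uniformization over the compact set $T_1(A)$, analogous to Proposition~\ref{pro:key trace}, should produce the uniform bound.

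The main obstacle is making the uniformity in $\tau$ rigorous: for a fixed extremal trace the pointwise analysis yields the sharper bound $\epsilon_0$, but for $\max_\tau$ over the compact set $T_1(A)$ the rate of convergence depends on $\tau$. The factor $2$ in the conclusion is precisely the slack absorbed by splitting the total estimate into the mean contribution $\tau(f_n)(\tau(c)-\theta)$ and the fluctuation contribution $\tau((c-\theta)(f_n-\tau(f_n)\cdot 1))$, each of which is controlled by $\epsilon_0$ in the limit.
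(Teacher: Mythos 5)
Your proof has a genuine gap, and it sits exactly where the real content of the lemma lies: the uniformity of the estimate over the compact set $T_1(A)$. The reduction $\tau(cf_n)-\theta\tau(f_n)=\tilde\tau((c-\theta)f_n)$ in $\tilde A$ and the split into the mean term $\tau(f_n)(\tau(c)-\theta)$ (bounded by $\max_\tau|\tau(c)-\theta|$) and a fluctuation term are fine as far as they go, but the fluctuation term is never actually controlled. The justification you offer for it fails on two counts. First, the assertion that ``central sequences converge \emph{strongly} to scalars in a factor'' is false: in the hyperfinite II$_1$ factor a sequence of asymptotically independent projections $p_n$ of trace $1/2$ is central yet satisfies $\|p_n-\tfrac12\|_2=\tfrac12$ for all $n$. (This matters here, since the tracial von Neumann algebras arising in this paper are exactly of McDuff type, and the whole point of Section 5 is that they admit many nontrivial central sequences.) What is true for an \emph{extremal} trace is weak convergence of $f_n-\tau(f_n)1$ to $0$ (bounded, with all weak limit points in $\pi_\tau(A)''\cap\pi_\tau(A)'=\mathbb{C}$ and of trace $0$), and weak convergence would indeed suffice to kill the fluctuation term for that fixed $\tau$ --- but that is a different argument from the one you wrote, and it only gives a pointwise statement for extremal traces.

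Second, the passage from ``for each extremal $\tau$'' to ``$\limsup_n\max_{\tau\in T_1(A)}$'' is waved at rather than proved. Dini's theorem does not apply: there is no monotonicity, the quantity depends on $\tau$ both through the functional and through the centering constant $\tau(f_n)$, and for a non-extremal $\tau$ the weak limit points of $f_n-\tau(f_n)1$ are central elements of $\pi_\tau(A)''$ that need not be scalars, so the fluctuation term need not vanish at all for such $\tau$; one must integrate the extremal estimates and justify interchanging $\limsup_n$ with that integral uniformly. This uniformity is precisely what the quoted result \cite[Lemma 4.6]{MS1} establishes (in the unital case), and the paper's own proof consists only of adapting that unital argument by replacing $f_n$ and $\theta$ with $h_mf_n$ and $\theta h_m$ for an approximate unit $\{h_m\}$ and then removing $h_m$ via Proposition~\ref{pro:key trace}; your proposal replaces the approximate unit by working in $\tilde A$, which is a legitimate alternative framing, but it leaves the core of the unital argument unsupplied. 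A symptom of the gap is the bookkeeping of the constant: on your own plan the fluctuation term would be $o(1)$, giving a total bound of $\max_\tau|\tau(c)-\theta|$ rather than twice it, yet you assert each of the two summands contributes $\max_\tau|\tau(c)-\theta|$ without proving the second bound. The orthogonality claims about $d=(c-\theta)_+$ and $e=(c-\theta)_-$ and $df_ne\to0$ are likewise never used in any estimate.
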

\begin{proof}
Let $\{h_m\}_{m\in\mathbb{N}}$ be a countable approximate unit for $A$.
Replacing $f_n$ and $\theta$ in \cite[Lemma 4.6]{MS1} with $h_mf_n$ and $\theta h_m$ respectively, 
the same argument in the proof of \cite[Lemma 4.6]{MS1} shows that 
$$
\limsup_{n\to\infty}\max_{\tau\in T_1(A)} |\tau (cf_n)-\theta\tau (h_mf_n)|\leq 2\max_{\tau\in T_1(A)}|\tau (c)-\theta\tau (h_m) |
$$
for any $m\in\mathbb{N}$. 
By Proposition \ref{pro:key trace}, we have 
$$
\limsup_{n\to\infty}\max_{\tau\in T_1(A)} |\tau (cf_n)-\theta\tau (f_n)|\leq 2\max_{\tau\in T_1(A)}|\tau (c)-\theta |.
$$
\end{proof}
\begin{lem}\label{lem:instead2}
Let $A$ be a separable simple C$^*$-algebra such that $T_1(A)$ is a non-empty compact set, and let $a$ be a nonzero 
positive element in $\tilde{A}$. If $(f_n)_n$ is a central sequence of positive contractions in $A$ such that 
$$
\lim_{m\to\infty}\liminf_{n\to\infty}\min_{\tau\in T_1(A)} \tau (f_n^m)>0,
$$
then 
$$
\lim_{m\to\infty}\liminf_{n\to\infty}\min_{\tau\in T_1(A)} \tau (f_n^{m/2}af_n^{m/2})>0.
$$ 
\end{lem}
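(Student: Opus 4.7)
The plan is to adapt the unital Matui--Sato argument---in which simplicity supplies a representation of the unit as $\sum_{k} b_{k}^{*}ab_{k}$---to the non-unital setting, compensating for the absence of a unit by cutting down with elements of an approximate unit of $A$. Each $\tau\in T_1(A)$ will be extended to a tracial state on $\tilde{A}$ via $\tau(1)=1$, and the centrality assumption $\|[f_n,b]\|\to 0$ for $b\in A$ automatically passes to $b\in\tilde{A}$ (since the scalar part commutes with everything) and hence, via continuous functional calculus, to $\|[f_n^{m/2},b]\|\to 0$ for every $m$.

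First, given $\epsilon>0$, I use Proposition \ref{pro:key trace} to choose an approximate unit element $h_N\in A$ with $\max_{\tau\in T_1(A)}|\tau(f_n^m)-\tau(h_N f_n^m)|<\epsilon$ for all $m,n$. Next, for $M$ large enough the element $a':=h_M^{1/2}ah_M^{1/2}$ is a nonzero positive element of $A$ (otherwise $a^{1/2}h_M=0$ for all $M$, which forces $a=0$ in $M(A)$), and Lemma \ref{lem:Brownsup} supplies $K\in\mathbb{N}$ with $[(h_N-\epsilon')_+]\leq K[a']$ in $\mathrm{Cu}(A)$. Cuntz comparison then produces $z_{1},\dots,z_{K}\in A$ with $\bigl\|\sum_{k} z_{k}^{*}a'z_{k}-(h_N-\epsilon')_+\bigr\|<\epsilon'$; setting $y_{k}:=h_M^{1/2}z_{k}\in A$ and $Y:=\sum_{k}y_{k}y_{k}^{*}\in A_+$ yields $\sum_{k} y_{k}^{*}ay_{k}\approx(h_N-\epsilon')_+$ within $\epsilon'$.

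The main computation uses traciality and the commutator estimates $\|[f_n^{m/2},y_{k}]\|,\,\|[f_n^{m/2},a]\|\to 0$ (as $n\to\infty$ for fixed $m$) to shuffle $f_n^{m/2}$ across:
\[
\sum_{k}\tau\bigl(y_{k}^{*}ay_{k}\,f_n^m\bigr)=\sum_{k}\tau\bigl(y_{k}^{*}f_n^{m/2}af_n^{m/2}y_{k}\bigr)+o_n(1)=\tau\bigl(Y\,f_n^{m/2}af_n^{m/2}\bigr)+o_n(1).
\]
Since $Y\leq \|Y\|\cdot 1$ and $f_n^{m/2}af_n^{m/2}\geq 0$, the elementary tracial estimate $\tau(YX)=\tau(X^{1/2}YX^{1/2})\leq \|Y\|\,\tau(X)$, together with the Cuntz approximation and the first step, gives
\[
\tau(f_n^m)\leq \|Y\|\,\tau(f_n^{m/2}af_n^{m/2})+\epsilon+2\epsilon'+o_n(1).
\]
Taking $\liminf_n\min_{\tau}$ and then $\lim_m$, with $\epsilon$ and $\epsilon'$ chosen small compared with $c:=\lim_m\liminf_n\min_\tau\tau(f_n^m)>0$, delivers $\lim_m\liminf_n\min_\tau\tau(f_n^{m/2}af_n^{m/2})\geq (c-\epsilon-2\epsilon')/\|Y\|>0$.

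The main obstacle is the Cuntz step of the second paragraph: because $a$ may genuinely lie in $\tilde{A}\setminus A$, Lemma \ref{lem:Brownsup} cannot be applied to $a$ directly and one must pass through the auxiliary compression $a'\in A_+$. The quantifier order must then be tracked carefully---$N$, $M$, $K$, the $z_{k}$, and hence $\|Y\|$ depend on $a,\epsilon,\epsilon'$ but not on $m$ or $n$---so that the iterated limit $\lim_m\liminf_n$ can be formed. Once this ordering is fixed, the remaining ingredients (the extension of $\tau$ and of centrality to $\tilde{A}$, the commutator bookkeeping, and the final choice of $\epsilon,\epsilon'$) are routine.
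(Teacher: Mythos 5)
Your argument is correct, and its second half --- extending $\tau$ and the centrality to $\tilde{A}$, shuffling $f_n^{m/2}$ across by traciality and the commutator estimates, and closing with $\tau(YX)\leq \|Y\|\tau(X)$ --- is essentially the same computation as in the paper. Where you genuinely diverge is in how the finite family with $\sum_k y_k^*ay_k$ approximately equal to an approximate unit is produced. The paper sets $R:=a^{1/2}A$, observes that $R^*R=AaA$ is a dense ideal of $A$ by simplicity, and invokes the construction of Brown's Lemma 2.3 to get an honest approximate unit of the form $\{\sum_{j=1}^{n}v_j^*av_j\}_{n}$, truncated to finitely many terms by Proposition \ref{pro:key trace}; this works directly with $a\in\tilde{A}$ because $a^{1/2}A\subseteq A$, requires no compression of $a$ and no $\epsilon'$-bookkeeping, and yields the constant $\sum_j\|v_j\|^2$. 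You instead compress $a$ to $a'=h_M^{1/2}ah_M^{1/2}\in A_+$, apply Lemma \ref{lem:Brownsup} together with $(h_N-\epsilon')_+\ll h_N$ to get $[(h_N-\epsilon')_+]\leq K[a']$, and unwind the Cuntz relation into $\sum_k z_k^*a'z_k\approx(h_N-\epsilon')_+$; this costs you the compression step (needed, as you note, since Lemma \ref{lem:Brownsup} lives in $A$) and two extra error terms, but it is equally valid, and your quantifier tracking ($N,M,K,\|Y\|$ fixed before $m,n$) is handled correctly. Both routes ultimately rest on Brown-type results and simplicity; the paper's is a bit leaner, while yours makes the comparison-theoretic content of the covering step explicit. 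The only points worth tightening are cosmetic: note that $\|Y\|>0$ for $\epsilon'$ small (else $\|(h_N-\epsilon')_+\|<\epsilon'$, which is absurd), and that the commutator estimates are norm estimates, hence uniform over $T_1(A)$, so the $o_n(1)$ terms survive the $\min_\tau$.
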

\begin{proof}
Put $R:=a^{1/2}A$. Since $A$ is simple, $R$ is a right ideal of $A$ such that $R^*R=AaA$ is a dense ideal of $A$. 
Therefore there exists a sequence $\{v_j\}_{j\in\mathbb{N}}$ in $A$ such that $\{\sum_{j=1}^nv_j^*av_j\}_{n\in\mathbb{N}}$ is an approximate unit for $A$ 
by a similar argument as in \cite[Lemma 2.3]{B}. 
By Proposition \ref{pro:key trace}, there exists a natural number $N$ such that 
$$
\lim_{m\to\infty}\liminf_{n\to\infty}\min_{\tau\in T_1(A)} \tau (\sum_{j=1}^Nv_j^*av_jf_n^m)>0.
$$
We have 
\begin{align*}
\lim_{m\to\infty}\liminf_{n\to\infty}\min_{\tau} \tau (\sum_{j=1}^Nv_j^*av_jf_n^m) 
& =\lim_{m\to\infty}\liminf_{n\to\infty}\min_{\tau}\sum_{j=1}^N\tau (v_j^*a^{1/2}f_n^ma^{1/2}v_j) \\
& =\lim_{m\to\infty}\liminf_{n\to\infty}\min_{\tau}\sum_{j=1}^N\tau (f_n^{m/2}a^{1/2}v_jv_j^*a^{1/2}f_n^{m/2}) \\
& \leq \sum_{j=1}^N\| v_j\|^2 \lim_{m\to\infty}\liminf_{n\to\infty}\min_{\tau} \tau (f_n^{m/2}af_n^{m/2}).
\end{align*}
Hence we obtain the conclusion. 
\end{proof}
Let $A$ be a separable simple C$^*$-algebra, and let $\tau$ be a tracial state on $A$. 
Consider the GNS representation $(\pi_{\tau},H_{\tau},\xi_{\tau})$ associated with $\tau$. 
Then $\pi_{\tau} (A){''}$ is a finite von Neumann algebra and  $\pi_{\tau} (A)$ is strongly dense subalgebra of  $\pi_{\tau} (A){''}$ in general. 
(Indeed, every approximate unit for $\pi_{\tau} (A)$ is strongly convergent to $1_{H_{\tau}}$.) 
In particular, Kaplansky density theorem shows that for any positive contraction $H\in\pi_{\tau} (A){''}$ there exists a sequence $\{a_n \}_{n\in\mathbb{N}}$ of positive contractions in $A$ 
such that $\pi (a_n)$ is strongly converge to $H$. 
We can identify C$^*(\pi_{\tau} (A), 1_{H_{\tau}})$ in $B(H_{\tau})$ with its unitization algebra $\tilde{A}$. 
Therefore the same proof as \cite[Lemma 2.1]{S} shows the following lemma. 
See also \cite[Proposition 3.5 and Theorem 4.3]{MS2}. 
\begin{lem}\label{lem:surjective central sequence}(\cite[Lemma 2.1]{S}) \\
Let $A$ be a separable simple nuclear C$^*$-algebra, and let $\tau$ be a tracial state on $A$. 
For any sequence $\{H_n\}_{n\in\mathbb{N}}$ of positive contractions in $\pi_{\tau} (A){''}$ such that 
$\| [H_n,x] \|_{\tau}\to 0$ for all $x\in\pi_{\tau} (A){''}$, there exists a central sequence $(c_n)_n$ of positive contractions in $A$ such that 
$\| c_n- H_n \|_{\tau}\to 0$. 
\end{lem}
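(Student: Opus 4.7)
Since $\tau$ is a faithful tracial state on $A$, it extends to a faithful normal tracial state on the finite von Neumann algebra $M := \pi_{\tau}(A){''}$; nuclearity of $A$ makes $M$ hyperfinite by Connes's theorem, and on the unit ball of $M$ the 2-norm $\|\cdot\|_{\tau}$ metrizes the strong operator topology. Identify $\pi_{\tau}(A)$ with $A$ inside $M$ as in the paragraph preceding the statement, and fix once and for all a dense sequence $\{x_j\}_{j \in \mathbb{N}}$ in $A$. The plan is a diagonal construction: for each $n$ I will produce a positive contraction $c_n \in A$ satisfying $\|c_n - H_n\|_{\tau} < 1/n$ and $\|[c_n, x_j]\| < 1/n$ for every $1 \le j \le n$. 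Such a sequence is automatically central in $A$ in operator norm and satisfies $\|c_n - H_n\|_{\tau} \to 0$, as required.

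For fixed $n$, Kaplansky density in $M$ yields a positive contraction $b_n \in A$ with $\|b_n - H_n\|_{\tau}$ arbitrarily small, and the hypothesis then ensures $\|[b_n, x_j]\|_{\tau} < 1/n$ for $j \le n$. The principal obstacle is that this approximate centrality in 2-norm does not, by itself, imply approximate centrality in operator norm, which is what the central sequence property in $A$ demands; an element of $A$ can have arbitrarily small $\|\cdot\|_\tau$-commutator with $x_j$ while $\|[b_n, x_j]\|$ remains of order one.

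The upgrade from 2-norm to operator-norm centrality combines the hyperfiniteness of $M$ with the nuclearity of $A$. Hyperfiniteness supplies an increasing sequence of finite-dimensional unital $*$-subalgebras $E_k \subset M$ with SOT-dense union, so $b_n$ can be 2-norm approximated by a positive contraction $b'_n$ in some $E_{k(n)}$; on the unit ball of a fixed finite-dimensional C$^*$-algebra the operator and 2-norms are equivalent, which translates the small 2-norm commutators into small operator-norm commutators inside $E_{k(n)}$. Nuclearity then supplies, for any prescribed finite subset and tolerance, unital completely positive maps $\phi_n : \tilde A \to E_{k(n)}$ and $\psi_n : E_{k(n)} \to \tilde A$ with $\|\psi_n \circ \phi_n(a) - a\| < 1/n$ on the relevant finite sets and with $\psi_n$ approximately multiplicative there. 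Setting $c_n := \psi_n(b'_n)$, truncated via continuous functional calculus to a positive contraction, the 2-norm approximation of $H_n$ persists because $\psi_n \circ \phi_n$ is operator-norm close to the identity, while the operator-norm commutator estimate $\|[c_n, x_j]\| < 1/n$ propagates through the near-multiplicativity of $\psi_n$, the norm equivalence in $E_{k(n)}$, and the estimate $\|\psi_n \circ \phi_n(x_j) - x_j\| < 1/n$. The main technical difficulty is coordinating the four tolerances (the choice of $b_n$, of $E_{k(n)}$, of $\phi_n$ and $\psi_n$, and of the cut-off) so that all four estimates hold simultaneously; this single diagonal argument is precisely what is carried out in \cite[Lemma 2.1]{S}.
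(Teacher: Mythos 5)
You correctly isolate the real difficulty --- that Kaplansky density only gives positive contractions $b_n\in A$ whose commutators with the $x_j$ are small in $\|\cdot\|_{\tau}$, not in operator norm --- but the mechanism you propose for upgrading to operator-norm centrality does not work, and it is not the mechanism of \cite[Lemma 2.1]{S}. First, the commutator $[b_n',x_j]$ is not an element of $E_{k(n)}$ (the $x_j$ live in $A$, not in the finite-dimensional subalgebra), so the equivalence of the operator norm and the $2$-norm on $E_{k(n)}$ says nothing about it; replacing $x_j$ by its trace-preserving conditional expectation onto $E_{k(n)}$ only controls $\|x_j-E_{E_{k(n)}}(x_j)\|_{\tau}$, not the operator norm, so the estimate does not transfer back. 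Moreover that norm equivalence holds with a constant that blows up with the dimension of $E_{k(n)}$, which must grow with $n$, so it cannot be used uniformly in a diagonal argument. Second, nuclearity does not supply maps $\psi_n\colon E_{k(n)}\to\tilde A$ that are simultaneously approximately multiplicative on $E_{k(n)}$ and satisfy $\|\psi_n\circ\phi_n(a)-a\|<1/n$; the completely positive approximation property gives no control on multiplicativity of the downward maps, and demanding both at once is essentially an AF/NF-type approximation that fails for general nuclear C$^*$-algebras. The actual engine of Sato's proof is Haagerup's theorem on amenability of nuclear C$^*$-algebras (approximate diagonals $\sum_i d_i\otimes d_i^*$ with $\sum_i d_id_i^*\le 1$ and $\sum_i\|[x,d_i]\|^2$ small), which is what converts $\|\cdot\|_{\tau}$-smallness into operator-norm smallness via a Cauchy--Schwarz estimate in the Haagerup tensor norm; your sketch omits this ingredient entirely, and it is precisely the one whose unital dependence the paper discusses.

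Note also that the paper does not reprove the lemma: its ``proof'' consists of the observations needed to run Sato's argument without a unit, namely that $\pi_{\tau}(A)''$ is finite with $\pi_{\tau}(A)$ strongly dense, that Kaplansky density produces the approximants inside $A$ rather than $\tilde A$, that $C^*(\pi_{\tau}(A),1_{H_{\tau}})$ identifies with $\tilde A$, and that the application of Haagerup's theorem survives because $A$ is a two-sided ideal of $\tilde A$ (citing \cite[Theorem 2.1]{FKK}). Your proposal does not address the non-unital issue at all, which is the only content the paper adds beyond the citation.
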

Maybe someone considers that \cite[Lemma 2.1]{S} depends on a unit for the application of Haagerup's theorem (\cite[Theorem 3.1]{Haag}); see for example \cite[Theorem 2.1]{FKK} for details. 
But we can check that the same proof of \cite[Lemma 2.1]{S} works for non-unital C$^*$-algebras because $A$ is a two-sided ideal of $\tilde{A}$ 
and for any positive contraction $H\in\pi_{\tau} (A){''}$ there exists a sequence $\{a_n \}_{n\in\mathbb{N}}$ of positive contractions in $A$ 
such that $\pi (a_n)$ is strongly converge to $H$. 

If $\tau$ is an extremal tracial state on a separable simple infinite-dimensional nuclear C$^*$-algebra $A$, then $\pi_{\tau} (A){''}$ is the AFD II$_1$ factor in general. 
Therefore Lemma \ref{lem:surjective central sequence} and the same proof as \cite[Lemma 3.3]{MS} show the following lemma.  
\begin{lem}\label{lem:key MS}(\cite[Lemma 3.3]{MS}) \\
Let $A$ be a separable simple infinite-dimensional nuclear C$^*$-algebra with finitely many extremal tracial states. 
For any $k\in\mathbb{N}$, there exist central sequences $(c_{i,n})_n$ in $A$ , $i=1,2,..,k$ such that $c_{1,n}$ is a positive contraction 
for any $n\in\mathbb{N}$, $(c_{i,n}c_{j, n}^*)_n=\delta_{i.j}(c_{1,n}^2)_n$ and 
$$
\lim_{n\to\infty}\max_{\tau\in T_1(A)}| \tau (c_{1,n}^m)-\frac{1}{k} | =0 
$$
for any $m\in\mathbb{N}$. 
\end{lem}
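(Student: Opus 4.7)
The plan is to adapt Matui--Sato's proof of \cite[Lemma 3.3]{MS} to the non-unital setting, with Lemma \ref{lem:surjective central sequence} playing the role of Sato's original \cite[Lemma 2.1]{S}. Let $\tau_1,\dots,\tau_\ell$ be the (finitely many) extremal tracial states of $A$. Since $T_1(A)$ is the Choquet simplex with these as its extreme points, the uniform estimate $\max_{\tau\in T_1(A)}|\tau(c_{1,n}^m)-1/k|\to 0$ reduces to showing $|\tau_p(c_{1,n}^m)-1/k|\to 0$ for each $p$.

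For each $p$, simplicity, separability, infinite-dimensionality and nuclearity of $A$ force $M_p:=\pi_{\tau_p}(A)''$ to be an injective II$_1$ factor with separable predual, hence the hyperfinite II$_1$ factor $\mathcal{R}$ by Connes' theorem. Since $\mathcal{R}$ is McDuff, for each $k$ one can find a sequence of systems of $k\times k$ matrix units $\{E^{(p)}_{i,j,n}\}_{i,j=1}^{k}\subset M_p$ with $\tau_p(E^{(p)}_{i,i,n})=1/k$ and $\|[E^{(p)}_{i,j,n},x]\|_{\tau_p}\to 0$ for every $x\in M_p$.

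The next step is to lift these matrix units, simultaneously over $p$, to central sequences in $A$. Taking targets $H_{i,j,n}:=\bigoplus_{p=1}^{\ell}E^{(p)}_{i,j,n}\in\bigoplus_p M_p$ and applying (a direct-sum variant of) Lemma \ref{lem:surjective central sequence}, I would obtain central sequences $(\tilde c_{i,j,n})_n$ of contractions in $A$ whose images in each $M_p$ converge in $\|\cdot\|_{\tau_p}$ to $E^{(p)}_{i,j,n}$. Setting $c_{i,n}:=\tilde c_{1,i,n}$ and performing a small functional-calculus and polar-decomposition cleanup inside the unital central sequence algebra $F(A)$, I can replace these by asymptotically equivalent elements (still called $c_{i,n}$) for which $c_{1,n}$ is positive and the exact relations $c_{i,n}c_{j,n}^*=\delta_{i,j}c_{1,n}^2$ hold in $F(A)$. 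The trace identity $\tau(c_{1,n}^m)\to 1/k$, uniformly in $\tau\in T_1(A)$, then follows from $\tau_p(E^{(p)}_{1,1,n})=1/k$ together with the reduction to extremal traces.

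The main obstacle is the \emph{simultaneous} lifting step, since Lemma \ref{lem:surjective central sequence} as stated is written for a single GNS representation. I would handle this either by iterating the single-trace lemma and using that the GNS representations $\pi_{\tau_p}$ are mutually disjoint for distinct extremals of a simple nuclear C$^*$-algebra (so that targets in the different $M_p$ can be approximated independently), or by rerunning Sato's Kaplansky density argument inside $\bigoplus_p M_p$ with the faithful normal trace $\sum_p\tau_p$, noting that this direct sum is still a finite injective von Neumann algebra with separable predual. A secondary technicality is that Lemma \ref{lem:surjective central sequence} only gives convergence in trace norm, so producing the exact algebraic matrix-unit relations at the C$^*$-level requires the functional-calculus cleanup in $F(A)$ mentioned above, which is harmless because $F(A)$ is unital.
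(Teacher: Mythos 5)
Your proposal is correct and follows essentially the same route as the paper, which proves this lemma simply by invoking Lemma \ref{lem:surjective central sequence} (the non-unital replacement for Sato's lemma) and then running the proof of \cite[Lemma 3.3]{MS} verbatim: reduction to the finitely many extremal traces, hyperfiniteness of each $\pi_{\tau_p}(A)''$, approximately central matrix units there, lifting to central sequences in $A$, and a cleanup to get the exact relations. The only point the paper flags that you do not mention explicitly is that the unitaries appearing in Matui--Sato's argument must be taken in $\tilde{A}$, which is harmless since $A$ is a two-sided ideal of $\tilde{A}$.
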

Note that we need to consider unitaries in $\tilde{A}$ in the proof above. But it is also no problem because $A$ is a two-sided ideal of $\tilde{A}$. 

Let $\omega$ be a pure state on $A$. Then we can uniquely extend $\omega$ to a pure state $\tilde{\omega}$ on $\tilde{A}$. 
Moreover if $A$ is a separable simple non-type I C$^*$-algebra, then $\pi_{\omega} (A)\cap K(H_{\omega}) =\{0\}$. 
Therefore the same proof as \cite[Lemma 3.1]{MS} shows that every completely positive map of $\tilde{A}$ to $\tilde{A}$ can be approximated in the 
pointwise norm topology by completely positive map $\varphi$ of the form 
$$
\varphi (x)=\sum_{l=1}^N\sum_{i,j=1}^N\tilde{\omega}(d_i^*xd_j)c_{l,i}^*c_{l,j}, \;x\in \tilde{A}
$$
where $c_{l,i},d_i\in \tilde{A}$. 
For $1\leq l\leq N$, let $\varphi_l(x)=\sum_{i,j=1}^N\tilde{\omega}(d_i^*xd_j)c_{l,i}^*c_{l,j}$. Then $\varphi =\varphi_1+...+\varphi_N$. 
Using Lemma \ref{lem:instead2} instead of \cite[Lemma 2.4]{MS}, we can prove a version of \cite[Proposition 2.2]{MS}, i.e. that each 
$\varphi_l$ can be excised in small central sequences in $A$. (See the proof of \cite[Lemma 2.5]{MS}, which is where \cite[Lemma 2.4]{MS} gets used; note 
also this where we need strict comparison.) We can check that \cite[Lemma 3.4]{MS} holds without the assumption of a unit by using Lemma \ref{lem:instead1} and Lemma \ref{lem:key MS} instead of 
\cite[Lemma 4.6]{MS1} and \cite[Lemma 3.3]{MS} respectively. By this lemma, we see that a sum of completely positive maps $\tilde{A}\rightarrow\tilde{A}$, each 
of which can be excised in small central sequences in $A$, can itself be excised in small central sequences in $A$. 
Therefore we obtain the following theorem. 

\begin{thm}\label{thm:excise}
Let $A$ be a separable simple infinite-dimensional nuclear C$^*$-algebra with finitely many extremal tracial states and no unbounded trace. 
If $A$ has a strict comparison, then any completely positive map of $\tilde{A}$ to $\tilde{A}$ can be excised in small central sequences in $A$. 
\end{thm}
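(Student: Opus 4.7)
The plan is to follow closely the strategy of Matui and Sato \cite{MS} from the unital case, inserting the non-unital preparatory lemmas established above at the points where their argument appeals to a unit. The proof proceeds in three steps, exactly mirroring the structure outlined in the paragraph preceding the theorem.

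First, I would reduce the problem to CP maps of a specific algebraic form. By the argument parallel to \cite[Lemma 3.1]{MS}, which uses that $A$ is a separable simple non-type I C$^*$-algebra (so $\pi_{\omega}(A) \cap K(H_{\omega}) = \{0\}$ for every pure state $\omega$ on $A$, and such $\omega$ extends uniquely to a pure state $\tilde{\omega}$ on $\tilde{A}$), every CP map $\tilde{A} \to \tilde{A}$ is a pointwise-norm limit of maps of the form
\[
\varphi(x) = \sum_{l=1}^{N}\sum_{i,j=1}^{N} \tilde{\omega}(d_i^* x d_j)\, c_{l,i}^{*} c_{l,j}, \quad x \in \tilde{A},
\]
with $c_{l,i}, d_i \in \tilde{A}$. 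A diagonal argument reduces the theorem to showing each such $\varphi$ can be excised in small central sequences in $A$.

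Second, decompose $\varphi = \varphi_1 + \cdots + \varphi_N$ with $\varphi_l(x) := \sum_{i,j=1}^{N} \tilde{\omega}(d_i^* x d_j)\, c_{l,i}^{*} c_{l,j}$, and show each summand $\varphi_l$ can individually be excised in small central sequences. This is the analogue of \cite[Proposition 2.2]{MS}, whose proof (see also \cite[Lemma 2.5]{MS}) invokes the estimate \cite[Lemma 2.4]{MS} to control trace-values of $f_n^{m/2} a f_n^{m/2}$; the correct non-unital replacement is Lemma \ref{lem:instead2}. Strict comparison enters here through Corollary \ref{cor:ERS}: it allows the existence of the desired Cuntz subequivalences that produce the excising sequence $(s_n)_n$. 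All other appeals to the unit in the unital proof are replaced by the approximate unit $\{h_n\}$ of $A$ together with Proposition \ref{pro:key trace}.

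Third, pass from excision of individual summands to excision of the sum. This is the analogue of \cite[Lemma 3.4]{MS}. Given any $k \in \mathbb{N}$, Lemma \ref{lem:key MS} produces pairwise orthogonal matrix-unit-like central sequences $(c_{i,n})_n$ whose diagonals $(c_{1,n}^2)_n$ have trace approximately $1/k$ uniformly on $T_1(A)$; these split the given central sequences $(e_n)_n$ and $(f_n)_n$ into orthogonal pieces matched to the summands $\varphi_l$. Lemma \ref{lem:instead1} then supplies the required trace estimates for products $c f_n$ that are needed to glue the individual excising sequences into one excising sequence for $\varphi$. Combining the three steps yields the theorem.

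The main obstacle is technical rather than conceptual: in the non-unital setting one constantly moves between $A$ and $\tilde{A}$, and one must verify that central sequences of positive contractions in $A$ still interact in the expected way with elements of $\tilde{A}$, and that tracial expressions such as $\tau(h_n f_n)$ converge to $\tau(f_n)$ uniformly over the compact set $T_1(A)$. These are precisely what Lemmas \ref{lem:instead1}, \ref{lem:instead2}, \ref{lem:surjective central sequence}, and \ref{lem:key MS}, together with Proposition \ref{pro:key trace}, are designed to accomplish. Once these ingredients are assembled in the correct order, the Matui--Sato argument transfers almost verbatim, so the role of the proof is essentially organizational.
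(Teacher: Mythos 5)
Your proposal follows the paper's argument essentially verbatim: the same three-step reduction (approximation by maps of the form $\sum_{l,i,j}\tilde{\omega}(d_i^*xd_j)c_{l,i}^*c_{l,j}$ via the analogue of \cite[Lemma 3.1]{MS}, excision of each summand $\varphi_l$ via the analogue of \cite[Proposition 2.2]{MS} with Lemma \ref{lem:instead2} replacing \cite[Lemma 2.4]{MS}, and excision of the sum via the analogue of \cite[Lemma 3.4]{MS} using Lemmas \ref{lem:instead1} and \ref{lem:key MS}), with the same non-unital substitutions. The only quibble is that the Cuntz subequivalence needed in the second step comes directly from strict comparison in its strict-inequality form (as noted just before the definition of property (SI)) rather than from Proposition \ref{cor:ERS}, which requires $0$ to be an accumulation point of the spectrum; this does not affect the argument.
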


The following theorem is the main theorem in this section. 
\begin{thm}\label{thm:main stable}
Let $A$ be a separable simple infinite-dimensional non-type I nuclear C$^*$-algebra with 
a finite dimensional lattice of densely defined lower semicontinuous traces. 
Then $A$ has strict comparison if and only if $A$ is $\mathcal{Z}$-stable. 
\end{thm}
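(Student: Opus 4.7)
The easy direction is immediate: if $A$ is $\mathcal{Z}$-stable then by \cite[Theorem 4.5]{Ror} the Cuntz semigroup $\mathrm{Cu}(A)$ is almost unperforated, and since $A$ is simple exact this gives strict comparison. So the real work is the converse, and my plan is to follow the Matui--Sato strategy via Proposition \ref{pro:z-stable kirchberg}: for every $k \in \mathbb{N}$ I need to produce a unital $*$-homomorphism from the prime dimension drop algebra $I(k,k+1)$ into $F(A)$.

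First I would reduce to the bounded trace case. By Proposition \ref{pro:induced trace} there is a full hereditary subalgebra $B \subseteq A$ on which every densely defined lower semicontinuous trace is bounded. Since $B$ is Morita equivalent to $A$, Brown's theorem together with the isomorphism $F(A) \cong F(A\otimes \mathbb{K})$ noted before Proposition \ref{pro:z-stable kirchberg} gives $F(B)\cong F(A)$, so by Proposition \ref{pro:z-stable kirchberg} it suffices to treat $B$ in place of $A$. The finite dimensionality of the trace lattice and the boundedness of traces on $B$ force $T_1(B)$ to be a compact simplex with finitely many extreme points, and strict comparison passes to $B$ because the Cuntz semigroup is a Morita invariant and traces on $B$ are restrictions of traces on $A$ (Corollary \ref{cor:folklore trace}). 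After this reduction Theorem \ref{thm:excise} applies, so every completely positive map $\tilde{A} \to \tilde{A}$ can be excised in small central sequences of $A$; in particular $A$ has property (SI) by Remark \ref{rem:property si}.

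Fixing $k\in\mathbb{N}$, I would then apply Lemma \ref{lem:key MS} once with parameter $k$ and once, in the relative commutant, with parameter $k+1$, to obtain two commuting unital copies of $M_k$ and $M_{k+1}$ inside $F(A)$ whose minimal projections have tracial mass uniformly close to $1/k$ and $1/(k+1)$ along the extremal traces. Following Matui--Sato, I would next use excision of $\mathrm{id}_{\tilde A}$ to produce a contractive element $s \in F(A)$ in the appropriate relative commutant whose polar data interpolate between these two matrix-unit systems in the way demanded by the dimension drop relations: concretely, $s^{*}s$ is absorbed by a minimal projection of the $M_{k+1}$ copy and $ss^{*}$ by one of the $M_k$ copy, the required smallness of the ``defect'' being guaranteed by strict comparison together with (SI). This $s$ and the two matrix unit systems then assemble into a unital copy of $I(k,k+1)$ in $F(A)$, and Proposition \ref{pro:z-stable kirchberg} yields $\mathcal{Z}$-stability. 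The main obstacle I anticipate is verifying that Matui--Sato's dimension drop embedding survives the passage to the non-unital setting, where central sequences fail to be genuine approximate units; this is precisely what the technical lemmas of this section (Proposition \ref{pro:key trace} and Lemmas \ref{lem:instead1}, \ref{lem:instead2}, \ref{lem:key MS}) together with the fact that Theorem \ref{thm:excise} excises arbitrary cp maps on $\tilde A$ were set up to handle, so I expect the obstruction to be technical bookkeeping rather than a conceptual barrier.
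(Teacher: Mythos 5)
Your overall skeleton is the paper's: the easy direction via \cite[Theorem 4.5]{Ror}, the reduction to bounded traces via Proposition \ref{pro:induced trace} together with Brown's theorem and $F(A)\cong F(A\otimes\mathbb{K})$, property (SI) via Theorem \ref{thm:excise} and Remark \ref{rem:property si}, and the conclusion via a unital homomorphism $I(k,k+1)\to F(A)$ and Proposition \ref{pro:z-stable kirchberg}. Up to and including property (SI) your plan agrees with the paper.

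The final assembly step, however, is misdescribed in a way that would not work as stated. You propose to produce \emph{two commuting unital copies of $M_k$ and $M_{k+1}$ inside $F(A)$} and then interpolate between them. A unital copy of a matrix algebra in $F(A)$ is exactly what you cannot expect here: that is UHF-type absorption, strictly stronger than $\mathcal{Z}$-stability, and it is not what Lemma \ref{lem:key MS} delivers. That lemma only gives an order zero system, i.e.\ central sequences $(c_{i,n})_n$, $i=1,\dots,k$, with $(c_{i,n}c_{j,n}^*)_n=\delta_{ij}(c_{1,n}^2)_n$ and $\tau(c_{1,n}^m)\to 1/k$; the defect $1-\sum_i c_{i,n}^*c_{i,n}$ is small only in trace, not in norm, and no second system of size $k+1$ is needed or used. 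The paper's argument instead runs as follows: choose an approximate unit $(h_n)$ asymptotically commuting with $(c_{1,n})$, set $f_{i,n}=c_{i,n}h_n^{1/2}$ and $e_n=h_n-\sum_i f_{i,n}^*f_{i,n}$; Proposition \ref{pro:key trace} shows $\max_\tau\tau(e_n)\to 0$ and $\lim_m\liminf_n\min_\tau\tau(f_{1,n}^m)=1/k>0$; property (SI) applied to the pair $(e_n)$, $(f_{1,n})$ (not a direct excision of $\mathrm{id}_{\tilde A}$, which only enters in establishing (SI)) produces $(s_n)$ with $(s_n^*s_n)_n=(e_n)_n$ and $(f_{1,n}s_n)_n=(s_n)_n$, so that $[(f_{i,n}f_{j,n}^*)_n]=\delta_{ij}[(f_{1,n}^2)_n]$ and $[(s_n^*s_n+\sum_i f_{i,n}^*f_{i,n})_n]=1$ in $F(A)$. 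These are precisely the generators and relations of \cite[Proposition 2.1]{Sa}, which yields the unital homomorphism $I(k,k+1)\to F(A)$. So the single order zero $M_k$ system plus the one extra element $s$ is the whole content; the ``two commuting matrix unit systems'' picture should be replaced by this presentation before your argument can be completed.
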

\begin{proof}
R\o rdam showed that if $A$ is $\mathcal{Z}$-stable, then $A$ has strict comparison (see \cite[Corollary 4.6]{Ror}). 
We shall show the only if part. 
By Proposition \ref{pro:induced trace}, we may assume that $A$ has no unbounded trace. Hence $A$ has property (SI) 
by Remark \ref{rem:property si} and Theorem \ref{thm:excise}. 
For any $k\in\mathbb{N}$, there exist central sequences $(c_{i,n})_n$ in $A$ , $i=1,2,..,k$ such that $c_{1,n}$ is a positive contraction 
, $(c_{i,n}c_{j, n}^*)_n=\delta_{i.j}(c_{1,n}^2)_n$ and 
$$
\lim_{n\to\infty}\max_{\tau\in T_1(A)}| \tau (c_{1,n}^m)-\frac{1}{k} | =0 
$$
for any $m\in\mathbb{N}$ by Lemma \ref{lem:key MS}. 
Let $\{h_n\}_{n\in\mathbb{N}}$ be an approximate unit for $A$. 
Taking a suitable subsequence of $\{h_n\}_{n\in\mathbb{N}}$, we may assume that $(h_n)_n(c_{1,n})_n= (c_{1,n})_n(h_n)_n$. 
Define central sequences $(f_{i,n})_n$ in $A$, $i=1,..,k$ by  
$(f_{i,n})_n:=(c_{i,n}h_n^{1/2})_n$, and put $(e_n)_n:=(h_n-\sum_{i=1}^kf_{i,n}^*f_{i,n})_n$. 
Then we may assume that $(e_n)_n$ is a central sequence of positive contractions in $A$. 
Proposition \ref{pro:key trace} implies $\lim_{n\to\infty}\max_{\tau}|\tau(f_{i,n}^*f_{i,n}-c_{i,n}^*c_{i,n})|=0$ for any 
$1\leq i\leq k$, and hence we have
$$
\lim_{n\to\infty}\max_{\tau\in T_1(A)}\tau (e_n)=0.
$$ 
Note that $(f_{1,n})_n$ is a central sequence of positive contractions in $A$ by the assumption of $(h_n)_n$.  
Because $\{h_n^{1/2}\}_{n\in\mathbb{N}}$ is also an approximate unit for $A$, we have
\begin{align*}
\limsup_{n\to\infty}\max_{\tau}\| c_{1,n}-c_{1,n}^{1/2}h_n^{1/2}c_{1,n}^{1/2}\|_{\tau}^2 
& = \limsup_{n\to\infty}\max_{\tau}\tau ((c_{1,n}-c_{1,n}^{1/2}h_n^{1/2}c_{1,n}^{1/2})^2) \\
& \leq \limsup_{n\to\infty}\max_{\tau}\tau (c_{1,n}-c_{1,n}^{1/2}h_n^{1/2}c_{1,n}^{1/2}) \\
& =0
\end{align*}
by Proposition \ref{pro:key trace}. 
Hence 
\begin{align*}
\limsup_{n\to\infty}\max_{\tau}|\tau (c_{1,n}^m)-\tau (f_{1,n}^m)|
& =\limsup_{n\to\infty}\max_{\tau}|\tau (c_{1,n}^m-(c_{1,n}h_n^{1/2})^m)| \\
& =\limsup_{n\to\infty}\max_{\tau}|\tau (c_{1,n}^m-(c_{1,n}^{1/2}h_n^{1/2}c_{1,n}^{1/2})^m)| \\
& \leq \limsup_{n\to\infty}\max_{\tau}\| c_{1,n}^m-(c_{1,n}^{1/2}h_n^{1/2}c_{1,n}^{1/2})^m)\|_{\tau}=0
\end{align*}
for any $m\in\mathbb{N}$. 
Therefore we have 
$$
\lim_{m\to\infty}\liminf_{n\to\infty}\min_{\tau\in T_1(A)}\tau (f_{1,n}^m)=1/k>0.
$$ 
Since $A$ has property (SI), there exists a central sequence $(s_n)_n$ in $A$ such that 
$(s_n^*s_n+\sum_{i=1}^kf_{i,n}^*f_{i,n})_n=(h_n)_n$ and $(f_{1,n}s_n)_n=(s_n)_n$. 
We have $[(f_{i,n}f_{j, n}^*)_n]=\delta_{i.j}[(f_{1,n}^2)_n]$ and $[(s_n^*s_n+\sum_{i=1}^kf_{i,n}^*f_{i,n})_n]=1$ in $F(A)$ 
because $[(h_n^{1/2})_n]$ is a unit in $F(A)$. 
It follows from \cite[Proposition 2.1]{Sa} that there exists a unital homomorphism of $I(k,k+1)$ to $F(A)$. 
Consequently $A$ is $\mathcal{Z}$-stable by Proposition \ref{pro:z-stable kirchberg}. 
\end{proof}
\begin{rem}
Let $A$ be a separable simple infinite-dimensional non-type I nuclear C$^*$-algebra with 
a finite dimensional lattice of densely defined lower semicontinuous traces, that has strict comparison. 
Since $A$ is $\mathcal{Z}$-stable by the theorem above, there exists a unital homomorphism of $\mathcal{Z}$ to $M(A)^\infty\cap A^{\prime}$. 
But we do not know that we could show this fact directly without using Kirchberg's central sequence algebras. 
Note that if $A$ is non-unital, then there exists no unital homomorphism of $\mathcal{Z}$ to $(\tilde{A})^\infty\cap A^{\prime}$ because 
$\tilde{A}$ is not $\mathcal{Z}$-stable. 
\end{rem}
The following corollary is an immediate consequence of the theorem above and Corollary \ref{cor:main}. 
\begin{cor}
Let $A$ be a separable simple nuclear stably projectionless C$^*$-algebra with a unique tracial state and no unbounded trace. 
Assume that $A$ has strict comparison. 
Then we have the following exact sequence: 
\[\begin{CD}
      {1} @>>> \mathrm{Out}(A) @>\rho_A>> 
\mathrm{Pic}(A) @>T>> \mathcal{F}(A)
 @>>> {1} \end{CD}. \] 
\end{cor}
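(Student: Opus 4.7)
The plan is to combine the two main results of the paper. First, I would observe that having a unique tracial state and no unbounded trace means the lattice of densely defined lower semicontinuous traces is one-dimensional, hence finite-dimensional, so the hypothesis of Theorem \ref{thm:main stable} is satisfied. Since $A$ is nuclear it is in particular exact, and since it is stably projectionless (or simply since the tracial state is unique), we are in a position to apply the earlier machinery.

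Next, since $A$ has strict comparison and finite-dimensional trace simplex, Theorem \ref{thm:main stable} tells us that $A$ is $\mathcal{Z}$-stable. Therefore $A$ satisfies all the hypotheses of Corollary \ref{cor:main}: it is a simple exact separable $\mathcal{Z}$-stable stably projectionless C$^*$-algebra with a unique tracial state and no unbounded trace. Applying Corollary \ref{cor:main} directly produces the exact sequence
\[\begin{CD}
{1} @>>> \mathrm{Out}(A) @>\rho_A>> \mathrm{Pic}(A) @>T>> \mathcal{F}(A) @>>> {1}
\end{CD},\]
which is exactly the desired conclusion.

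There is essentially no obstacle here, as this is a two-line deduction from the two main theorems of the paper. The only thing worth verifying carefully is that ``unique tracial state and no unbounded trace'' does indeed fit into the ``finite-dimensional lattice of densely defined lower semicontinuous traces'' hypothesis of Theorem \ref{thm:main stable}; this is immediate since under these assumptions $T(A)$ consists exactly of the nonnegative scalar multiples of $\tau$, forming a one-dimensional lattice. All other conditions (separability, simplicity, nuclearity, being stably projectionless) are preserved verbatim between the hypotheses of Theorem \ref{thm:main stable}, Corollary \ref{cor:main}, and the statement of the corollary to be proved.
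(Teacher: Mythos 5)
Your proposal is correct and is exactly the paper's own argument: the paper states this corollary as an immediate consequence of Theorem \ref{thm:main stable} (giving $\mathcal{Z}$-stability from strict comparison and the one-dimensional trace lattice) followed by Corollary \ref{cor:main}. The only unstated detail, that a simple stably projectionless C$^*$-algebra is automatically infinite-dimensional and non-type I as required by Theorem \ref{thm:main stable}, is routine.
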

We shall consider some examples. 
We refer the reader to \cite{Ti} for details of slow dimensional growth for nonunital C$^*$-algebras. 
Tikuisis showed that if $A$ is a simple separable approximately subhomogeneous C$^*$-algebra with slow dimension growth, then $\mathrm{Cu}(A)$ is almost unperforated in 
\cite[Corollary 5.9]{Ti}. 
The following immediate corollary of this result and Theorem \ref{thm:main stable} is suggested by the referee. 
\begin{cor}\label{cor:1-dimensional}
Let $A$ be a simple separable non-type I approximately subhomogeneous C$^*$-algebra with slow dimension growth and a finite dimensional lattice of densely defined lower semicontinuous traces. 
Then $A$ is $\mathcal{Z}$-stable. 
\end{cor}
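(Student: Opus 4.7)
The plan is to combine Tikuisis's Cuntz-semigroup result with Theorem \ref{thm:main stable}, essentially plugging the former into the hypothesis of the latter. All the hard work has already been done; this corollary is really just a verification of hypotheses.

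First I would invoke \cite[Corollary 5.9]{Ti} directly: since $A$ is a simple separable ASH algebra with slow dimension growth, the Cuntz semigroup $\mathrm{Cu}(A)$ is almost unperforated. Next I would translate this into strict comparison. Approximately subhomogeneous C$^*$-algebras are subhomogeneous inductive limits, hence type I at each stage and so nuclear in the limit; in particular $A$ is exact. Simple separable ASH algebras are stably finite and have a nonempty tracial cone (the hypothesis on the lattice of traces already records this). Therefore, by the equivalence recalled in the discussion just before Lemma \ref{lem:Brownsup} (based on \cite[Proposition 4.2, Remark 4.3 and Proposition 6.2]{ERS} together with \cite[Proposition 3.2 and Corollary 4.6]{Ror}), almost unperforation of $\mathrm{Cu}(A)$ is equivalent to strict comparison of positive elements, so $A$ has strict comparison.

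Now I would verify the remaining hypotheses of Theorem \ref{thm:main stable}. The algebra $A$ is separable, simple, nuclear, non-type I and has a finite dimensional lattice of densely defined lower semicontinuous traces by assumption; the only remaining point is that $A$ is infinite-dimensional, but this is automatic because a finite-dimensional simple C$^*$-algebra is a matrix algebra and in particular type I, contradicting the non-type I hypothesis. Applying Theorem \ref{thm:main stable} to $A$ then yields $\mathcal{Z}$-stability, which is the desired conclusion.

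There is no real obstacle here beyond confirming that the hypotheses match up; the substantive content is packaged inside \cite[Corollary 5.9]{Ti} (which supplies almost unperforation in the ASH setting with slow dimension growth) and inside Theorem \ref{thm:main stable} (which supplies the Matui--Sato-style implication from strict comparison to $\mathcal{Z}$-stability in the non-unital case with finitely many traces). The proof therefore reduces to two lines: one citation each, with the equivalence between almost unperforation and strict comparison as the bridge.
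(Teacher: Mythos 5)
Your proposal is correct and follows exactly the route the paper intends: the paper presents this as an immediate consequence of \cite[Corollary 5.9]{Ti} combined with Theorem \ref{thm:main stable}, with the almost-unperforation/strict-comparison equivalence (via exactness of the nuclear ASH algebra $A$) as the bridge. The details you supply — nuclearity of ASH algebras and infinite-dimensionality from the non-type I hypothesis — are exactly the routine verifications the paper leaves implicit.
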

We say that $A$ is a \textit{1-dimensional NCCW complex} if $A$ is a pullback C$^*$-algebra of the form 
\[\begin{CD}
      A @>\pi_2>>   E \\
             @VV\pi_1V      @VV\rho V  \\                 
      C([0,1])\otimes F  @>\delta_0\oplus\delta_1>> F \oplus F
\end{CD} \]
where $E$ and $F$ are finite-dimensional C$^*$-algebras and $\delta_i$ is the evaluation map at $i$. Razak's building block $A(n,m)$ 
is a 1-dimensional NCCW complex. The Cuntz semigroup of a 1-dimensional NCCW complex was computed in \cite{APS}. 
Every simple inductive limit C$^*$-algebras of 1-dimensional NCCW complexes is approximately subhomogeneous C$^*$-algebra with slow dimension growth. 
The following example is also suggested by the referee. 
\begin{ex}
For $n\geq 2$, let $\mathcal{O}_{n}$ denote the Cuntz algebra generated by $n$ isometries 
$S_1,...,S_n$. 
Given $\lambda_1,...,\lambda_n\in\mathbb{R}$, there exists by universality a one-parameter 
automorphism group $\alpha$ of $\mathcal{O}_n$ given by $\alpha_t (S_j)=e^{it\lambda_{j}}S_j$. 
Kishimoto and Kumjian showed that if $\lambda_{j}$ are all nonzero of the same sign and 
$\{\lambda_1,...,\lambda_n \}$ generates $\mathbb{R}$ as a closed subgroup, then 
$\mathcal{O}_n\rtimes \mathbb{R}$ is a simple stable projectionless C$^*$-algebra with unique (up to scalar multiple) densely defined 
lower semicontinuous trace in \cite{KK1} and \cite{KK2}. 
In particular, $\mathcal{O}_n\rtimes_{\alpha}\mathbb{R}$ has a one parameter trace scaling automorphism group. 
Dean showed that there exist many sets of numbers $\{\lambda_1,..,\lambda_n\}$ such that $\mathcal{O}_n\rtimes_{\alpha}\mathbb{R}$ can be 
expressed as an inductive limit C$^*$-algebra of 1-dimensional NCCW-complexes in \cite[Theorem 5.1]{Dean}. 
Therefore for $\alpha$ defined by such a set of numbers $\{\lambda_1,..,\lambda_n\}$, $\mathcal{O}_n\rtimes_{\alpha}\mathbb{R}$ is $\mathcal{Z}$-stable. 
Moreover it can be checked that for any positive element $h$ in $\mathrm{Ped}(\mathcal{O}_n\rtimes_{\alpha}\mathbb{R})$,  
$\mathrm{Pic}(\overline{h(\mathcal{O}_n\rtimes_{\alpha}\mathbb{R})h})$ is isomorphic to a semidirect product of 
$\mathrm{Out}(\overline{h(\mathcal{O}_n\rtimes_{\alpha}\mathbb{R})h})$ with $\mathbb{R}_{+}^\times$ by the same argument of the proof in Theorem \ref{thm:example main}. 
\end{ex}
Robert classified inductive limit C$^*$-algebras of 1-dimensional NCCW complexes with trivial K$_1$-groups in \cite{Rob}. 
\begin{cor}
Let $A$ be a simple stably projectionless C$^*$-algebra with a unique tracial state and no unbounded trace, that is expressible as an 
inductive limit C$^*$-algebra of 1-dimensional NCCW-complexes with trivial K$_1$-groups and 
$B$ a separable simple C$^*$-algebra with a unique tracial state and no unbounded trace. 
Then we have the following exact sequence: 
\[\begin{CD}
      {1} @>>> \mathrm{Out}(A\otimes B) @>\rho_{A\otimes B}>> 
\mathrm{Pic}(A\otimes B) @>T>> \mathbb{R}_{+}^\times
 @>>> {1} \end{CD}. \] 
\end{cor}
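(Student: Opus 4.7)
The plan is to apply Corollary \ref{cor:main} to $C := A \otimes B$ (minimal tensor product) and then to identify $\mathcal{F}(C)$ with $\mathbb{R}_{+}^{\times}$ by exhibiting a trace-scaling automorphism of $C \otimes \mathbb{K}$ for every $\lambda \in \mathbb{R}_{+}^{\times}$, built from Robert's classification theorem.

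First I would verify the hypotheses of Corollary \ref{cor:main} for $C$. Since $A$ is nuclear (being subhomogeneous) and simple, and $B$ is simple with unique tracial state and no unbounded trace, the algebra $C$ is separable, $\sigma$-unital, simple, has unique tracial state $\tau_A \otimes \tau_B$, and has no unbounded trace; stable projectionlessness of $C$ is inherited from $A$. The crucial regularity input is $\mathcal{Z}$-stability, which for $A$ follows from Corollary \ref{cor:1-dimensional} (since $A$ is a simple, separable, non-type I approximately subhomogeneous C$^*$-algebra with slow dimension growth and a $1$-dimensional cone of densely defined traces), and hence passes to $C$. Corollary \ref{cor:main} then yields the exact sequence with $\mathcal{F}(C)$ in place of $\mathbb{R}_{+}^{\times}$.

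Next, by Proposition \ref{pro:key spliting} we have $\mathcal{F}(C) = \mathfrak{S}(C)$, so to conclude it suffices to realize every $\lambda \in \mathbb{R}_{+}^{\times}$ as a trace-scaling constant for some automorphism of $C \otimes \mathbb{K}$. I would first build such an automorphism $\alpha_\lambda$ on $A \otimes \mathbb{K}$ using Robert's classification theorem \cite[Corollary 6.2.4]{Rob}, which classifies inductive limits of $1$-dimensional NCCW complexes with trivial $K_1$ by the Cuntz semigroup paired with the cone of densely defined lower semicontinuous traces. Since $A$ has (up to scalar) a unique such trace, the dilation $\hat\tau \mapsto \lambda \hat\tau$ on the trace cone induces an automorphism of the classifying invariant of $A \otimes \mathbb{K}$, which Robert's theorem lifts to an automorphism $\alpha_\lambda$ of $A \otimes \mathbb{K}$ satisfying $(\tau_A \otimes \mathrm{Tr}) \circ \alpha_\lambda = \lambda (\tau_A \otimes \mathrm{Tr})$. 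Then $\alpha_\lambda \otimes \mathrm{id}_B$ is an automorphism of $C \otimes \mathbb{K}$ scaling $\tau_A \otimes \tau_B \otimes \mathrm{Tr}$ by $\lambda$, whence $\lambda \in \mathfrak{S}(C)$.

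The main obstacle is the classification step: one has to check that Robert's theorem applies in the stabilized, trace-scaled setting and indeed produces $\alpha_\lambda$. Concretely, since $A \otimes \mathbb{K}$ is not literally an inductive limit of $1$-dimensional NCCW complexes in the form handled in \cite{Rob}, one must track how stabilization interacts with the classifying invariant and verify that the dilation of the trace cone corresponds to a genuine invariant isomorphism that is liftable. Once $\alpha_\lambda$ is in hand, tensoring with $\mathrm{id}_B$ and combining with the first step via the multiplicativity of the trace-scaling map $S$ completes the argument in a formal way.
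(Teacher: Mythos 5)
Your overall strategy matches the paper's: reduce to Corollary \ref{cor:main} together with Corollary \ref{cor:1-dimensional}, and use Robert's classification to see that the fundamental group is all of $\mathbb{R}_{+}^\times$. The one genuine gap is your assertion that ``stable projectionlessness of $C$ is inherited from $A$.'' This is not automatic for tensor products: a projection in $A\otimes B\otimes\mathbb{K}$ need not come from either factor, and Corollary \ref{cor:main} cannot be invoked until this is settled. The paper handles the point explicitly: it first shows $\mathcal{F}(A)=\mathbb{R}_{+}^\times$, deduces $\mathcal{F}(A\otimes B)=\mathbb{R}_{+}^\times$, and only then concludes that $A\otimes B$ is stably projectionless from \cite[Corollary 4.10]{Na} (an uncountable fundamental group rules out a nonzero projection in the stabilization, since the fundamental group of a separable simple unital C$^*$-algebra is countable). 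Your argument can be repaired in the same way, because your construction of trace-scaling automorphisms of $C\otimes\mathbb{K}$ and the identification $\mathcal{F}(C)=\mathfrak{S}(C)$ from Proposition \ref{pro:key spliting} do not require stable projectionlessness --- but you must reorder the steps so that the computation of $\mathcal{F}(C)$ precedes the application of Corollary \ref{cor:main}.

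On the classification step that you yourself flag as the main obstacle, the paper takes a slightly different and easier route. Rather than lifting the dilation of the trace cone to a trace-scaling automorphism of $A\otimes\mathbb{K}$, it picks, for each $r\in(0,1)$, a positive element $h\in A$ with $d_{\tau}(h)=r$ (possible because $A$ has a positive element with continuous spectrum), notes that the class covered by \cite[Corollary 6.2.4]{Rob} is closed under stable isomorphism (\cite[Theorem 1.0.1]{Rob}) and that by \cite[Proposition 3.1.7]{Rob} the classifying invariants of $A$ and $\overline{hAh}$ coincide since the trace is unique up to scalar; hence $A\cong\overline{hAh}$ and $r\in\mathcal{F}(A)$ directly from the definition of $\mathcal{F}(A)$ as $\mathrm{Im}(T)$. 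This sidesteps the question of whether an isomorphism of invariants in the stabilized, trace-scaled setting lifts to an automorphism; by Proposition \ref{pro:key spliting} the two formulations are equivalent, so either route works once the details are supplied, but the paper's version keeps the appeal to Robert's theorem at the level of isomorphism of algebras rather than existence of automorphisms.
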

\begin{proof}
For any $r\in (0,1)$ there exists a positive element $h$ in $A$ such that $d_{\tau}(h)=r$ because $A$ has a positive element with a continuous spectrum. 
Note that the class of C$^*$-algebras covered by Robert's classification theorem in \cite{Rob} is closed under stable isomorphism (see \cite[Theorem 1.0.1]{Rob}). 
By \cite[Proposition 3.1.7]{Rob}, we see that a classifying invariant of the class of C$^*$-algebras which contains $A$ and $\overline{hAh}$ is equal to that of \cite[Corollary 6.2.4]{Rob}. 
Hence we see that $A$ is isomorphic to $\overline{hAh}$. 
Therefore $\mathcal{F}(A)=\mathbb{R}_{+}^\times$. Since $\mathcal{F}(A\otimes B)=\mathbb{R}_{+}^\times$ and $A\otimes B$ is separable, $A\otimes B$ is a stably projectionless C$^*$-algebra 
by \cite[Corollary 4.10]{Na}. 
Therefore we obtain the conclusion by Corollary \ref{cor:main} and Corollary \ref{cor:1-dimensional}. 
\end{proof}
We do not know whether the exact sequence above splits. 
This question is related to the existence of a one parameter trace scaling automorphism 
group of $A\otimes \mathbb{K}$. 
For any countable abelian groups $G_1$ and $G_2$, Kishimoto showed that there exists a stable projectionless simple separable nuclear 
C$^*$-algebra $A$ with unique (up to scalar multiple) densely defined lower semicontinuous 
trace with K$_0(A)=G_1$ and K$_1(A)=G_2$ in \cite{K2}. These stably projectionless C$^*$-algebras are constructed as 
the crossed products $\mathcal{O}\rtimes_{\alpha}\mathbb{R}$ by certain one parameter automorphism groups $\alpha$ of Kirchberg algebras $\mathcal{O}$ and 
the dual actions of $\alpha$ are trace scaling actions of $\mathcal{O}\rtimes_{\alpha}\mathbb{R}$. 
Hence it is natural to believe that there exists a kind of duality between $\mathcal{Z}$-stable stably projectionless 
C$^*$-algebras (with unique trace) and $\mathcal{O}_{\infty}$-stable C$^*$-algebras. 
From this view point, it seems to be possible to introduce the stably projectionless C$^*$-algebra $\mathcal{W}_n$ for any $n\geq 3$. 
Hence we denote by $\mathcal{W}_{2}$ the Razak-Jacelon algebra. 
On the other hand, Tikuisis \cite{Ti} constructed a simple separable nuclear stably 
projectionless C$^*$-algebra whose Cuntz semigroup is not almost unperforated. 

\section*{Acknowledgments}
The author would like to thank the Fields Institute, where a part of this work was done, 
for their hospitality. This travel is supported by the Global COE program 
"Education and Research Hub for Mathematics-for-Industry" at Kyushu University. 
He is also grateful to Leonel Robert for informing him about some results in \cite{Rob2} and 
to Hiroki Matui for useful suggestions. 
We also thank the referee for his or her careful reading and many valuable suggestions. 

\end{document}